\newtheorem{theorem}{Theorem}[section]
\newtheorem{lemma}[theorem]{Lemma}
\newtheorem{proposition}[theorem]{Proposition}
\newtheorem{corollary}[theorem]{Corollary}   
\newtheorem{definition}[theorem]{Definition}
\newtheorem{example}[theorem]{Example}
\numberwithin{equation}{section}
\begin{document}

\title{On the algebraic invariants of certain affine semigroup algebras}

\address{IIT Gandhinagar, Palaj, Gandhinagar, Gujarat-382355 India}

\author{Om Prakash Bhardwaj}
%\address{IIT Gandhinagar, Palaj, Gandhinagar, Gujarat-382355 India}
\email{om.prakash@iitgn.ac.in}

\author{Indranath Sengupta}
\email{indranathsg@iitgn.ac.in} 

\thanks{2010 Mathematics Subject Classification: 20M25, 13A02, 13D02, 13F55.}
\thanks{Keywords: Semigroup ring, Ap\'{e}ry set, defining ideal, syzygies, regularity.}
\thanks{The second author is the corresponding author.}

\maketitle

\begin{abstract}
Let $a$ and $d$ be two linearly independent vectors in $\mathbb{N}^2$, over the field of rational numbers. For a positive integer $k \geq 2$, consider the sequence $a, a+d, \ldots, a+kd$ such that the affine semigroup $S_{a,d,k} = \langle a, a+d, \ldots, a+kd \rangle$ is minimally generated by this sequence. We study the properties of affine semigroup algebra $k[S_{a,d,k}]$ associated to this semigroup. We prove that $k[S_{a,d,k}]$ is always Cohen-Macaulay and it is Gorenstein if and only if $k=2$. For $k=2,3,4$, we explicitly compute the syzygies, minimal graded free resolution and Hilbert series of $k[S_{a,d,k}].$ We also give a minimal generating set and a Gr\"{o}bner basis of the defining ideal of $k[S_{a,d,k}].$ Consequently, we prove that $k[S_{a,d,k}]$ is Koszul. Finally, we prove that the Castelnuovo-Mumford regularity of $k[S_{a,d,k}]$ is $1$ for any $a,d,k.$
\end{abstract}

\section{Introduction}
Throughout this article, the sets of integers and non-negative integers will be denoted by $\mathbb{Z}$ and $\mathbb{N}$ respectively. By an affine semigroup, we mean a finitely generated submonoid of $\mathbb{N}^r$ for some positive integer $r$. More precisely, let $a_1, \ldots , a_n \in \mathbb{N}^r$ then
\[ \langle a_1, \ldots ,a_n \rangle = \left\lbrace \sum_{i=1}^{n} \lambda_i a_i \mid \lambda_i \in \mathbb{N}, \forall i \right\rbrace\]
is called an affine semigroup generated by  $a_1, \ldots,  a_n$.  If $S:= \langle a_1, \ldots, a_n \rangle$ is generated by $ a_1, \ldots, a_n $ minimally then $\{a_1, \ldots, a_n \}$ is called the minimal system of generators of $S$. It is known that, an affine semigroup $S$ has a unique minimal set of generators. The cardinality of this set is called the embedding dimension of $S$. Let $k$ be a field. The semigroup ring $k[S]:=\oplus_{s \in S} k {\bf t}^s $ of $S$ is a $k$-subalgebra of the polynomial ring $k[t_1,\ldots,t_r]$, where $t_1,\ldots,t_r$ are indeterminates and ${\bf t}^s = \prod_{i=1}^r t_i^{s_i}$ for all $s = (s_1,\ldots,s_r) \in S$. The semigroup ring $k[S] = k[{\bf t}^{a_1}, \ldots, {\bf t}^{a_n}]$ of $S$ can be represented as a quotient of a polynomial ring using a canonical surjection $\pi : k[x_1,\ldots,x_n] \rightarrow k[S]$ given by $\pi(x_i) = {\bf t}^{a_i}$ for all $i=1,\ldots,n.$ 
Set $\deg x_i = a_i$ for all $i=1,\ldots,n.$ Observe that $k[x_1,\ldots,x_n]$ is a multi-graded ring and that $\pi$ is a degree preserving surjective $k$-algebra homomorphism. We denote by $I_S$ the kernel of $\pi.$ Then $I_S$ is a homogeneous ideal, generated by binomials, called the defining ideal of $S$. Note that a binomial $\phi = \prod_{i=1}^n x_i^{\alpha_i} - \prod_{j=1}^n x_j^{\beta_j} \in I_S$ if and only if $\sum_{i=1}^n \alpha_i a_i = \sum_{j=1}^n \beta_j a_j.$ With respect to this grading, $\deg_S \phi = \sum_{i=1}^n \alpha_i a_i.$ For $r =1$, the affine semigroups correspond to the numerical semigroups. A numerical semigroup $S$ is a submonoid of $\mathbb{N}$ such that $\mathbb{N} \setminus S$ is finite. Equivalently, there exist $a_1,\ldots,a_n \in \mathbb{N}$ such that $\mathrm{gcd}(a_1,\ldots,a_n) = 1$ and 
\[
S =  \langle a_1, \ldots, a_n \rangle = \left\lbrace \sum_{i=1}^{n} \lambda_i a_i \mid \lambda_i \in \mathbb{N}, \forall i \right\rbrace.
\]

The numerical semigroup ring $k[S]$ has been studied widely in the literature. A special class of numerical semigroup rings associated to the numerical semigroup generated by an arithmetic sequence is well studied, for example in (\cite{bermejo-isabel}, \cite{sengupta-srinivasan},  \cite{maloo-sengupta}, \cite{patil-sengupta}, \cite{patil-singh}) and many more. This motivates us to study a similar case in higher dimension. Let $a,d \in \mathbb{N}^2$ be two linearly independent vectors over the field of rational numbers $\mathbb{Q}$. In this article, for $k \in \mathbb{N}$, we consider the sequence $a, a+d, \ldots, a+kd$ such that the affine semigroup $S_{a,d,k} := \langle a, a+d, \ldots, a+kd \rangle$ is minimally generated by this sequence. Therefore, $S_{a,d,k}$ is an affine semigroup of embedding dimension $k+1$. 

Now, we summarize the contents of the paper. Section 2 recalls some definitions and results about simplicial affine semigroups. In \cite{jafari-2}, authors introduce the notion of quasi-Frobenius elements of a simplicial affine semigroup $S$. The set of quasi-Frobenius elements is denoted by $\mathrm{QF}(S)$. They also prove that if the semigroup ring $k[S]$ associated to a simplicial affine semigroup $S$ is Cohen-Macaulay then the Cohen-Macaulay type of $k[S]$ is equal to the cardinality of the set $\mathrm{QF}(S)$. In section 3, we explore the Cohen-Macaulayness and Gorenstein properties of $k[S_{a,d,k}]$. In Theorem \ref{CohenMacaulayness}, we prove that $k[S_{a,d,k}]$ is Cohen-Macaulay. In Theorem \ref{typeSad}, we explicitly compute the set $\mathrm{QF}(S_{a,d,k})$ and consequently, we prove that the Cohen-Macaulay type of $k[S_{a,d,k}]$ is $k-1$. 
In section 4, we give a minimal generating set of the defining ideal of $k[S_{a,d,k}]$. In Theorem \ref{k-dim J suset I_S}, we give an extension of the Gastinger's theorem (\cite{gastinger}, \cite[Theorem 4.8]{etobalancesemigroup}) to the simplicial affine semigroups. This is the key result of the section. With the help of Theorem \ref{k-dim J suset I_S}, we give a minimal generating set of the defining ideal $I_{S_{a,d,k}}$ of the semigroup ring $k[S_{a,d,k}]$ in Theorem \ref{generatingset}. In Theorem \ref{Groebnerbasis}, we prove that the proposed generating set is actually a Gr\"{o}bner basis for $I_{S_{a,d,k}}$ with respect to the reverse lexicographic order. In section 5, we explore the syzygies of $k[S_{a,k,d}]$ for $k=2,3,4.$ In Proposition \ref{resk=3} and Proposition \ref{hilbertseriesk=3}, we compute the minimal graded free resolution and the Hilbert series of $k[S_{a,d,k}]$ when $k=3$. In Proposition \ref{resk=4} and Proposition \ref{hilbertseriesk=4}, we compute the minimal graded free resolution and the Hilbert series of $k[S_{a,d,k}]$ when $k=4$. For a graded ideal $I$ of $k[x_1,\ldots,x_n]$, the ring $k[x_1,\ldots,x_n]/I$ is called Koszul if the minimal free resolution of $k$ over $k[x_1,\ldots,x_n]/I$ is linear. Finding the classes of Koszul rings is an interesting question in commutative algebra. In Theorem \ref{Koszul}, we prove that $k[S_{a,d,k}]$ is Koszul. By the Theorem \ref{generatingset}, we observe that $I_{S_{a,k,d}}$ is homogeneous with respect to the standard grading also on $k[x_1,\ldots,x_{k+1}]$. In section 6, Theorem \ref{regI_S_akd}, we prove that the Castelnuovo-Mumford regularity of $I_{S_{a,k,d}}$ is $2$ for any $a,k,d$. In section 7, we give an extension of $k[S_{a,d,k}]$ with respect to an element $b \in \mathbb{N}^2$. Define $S_{a,d,k}^b:= \langle a, a+d, \ldots, a+kd, b \rangle$. Let $R=k[x_1,\ldots,x_{k+1},y]$ and $\phi: k[x_1,\ldots,x_{k+1},y] \rightarrow k[t_1,t_2]$ such that $x_i \mapsto {\bf t}^{a+(i-1)d}$ for $i=1,\ldots,k+1$, $y \mapsto {\bf t}^b$. The semigroup ring $k[S_{a,d,k}^b]$ is isomorphic to the quotient ring $\frac{R}{I_{S_{a,d,k}^b}}$, where $I_{S_{a,d,k}^b}$ is the kernel of the algebra homomorphism $\phi$. With some conditions on $b$, we explicitly compute the set $\mathrm{QF}(S_{a,d,k}^b)$ and prove that $k[S_{a,d,k}^b]$ is Cohen-Macaulay (Proposition \ref{Aperysetb}, Corollary \ref{QF(S)b}). Consequently, we prove that the Cohen-Macaulay type of $k[S_{a,d,k}^b]$ is equal to the Cohen-Macaulay type of $k[S_{a,d,k}]$. We conclude the article by showing the utility of our results with a couple of examples in section 8.
\section{Preleminaries}

\begin{definition}
The affine semigroup $S= \langle a_1,\ldots,a_n\rangle \subset \mathbb{N}^r$ is called simplicial if there exist $a_{i_1},\ldots, a_{i_r} \in \{ a_1,\ldots,a_n\}$ such that\\
$(1)$ $a_{i_1},\ldots, a_{i_r}$ are linearly independent over $\mathbb{Q}$\\
$(2)$ for each $a \in S$, there exist $0 \neq \alpha \in \mathbb{N}$ such that $\alpha a = \sum_{j=1}^r \lambda_j a_{i_j}$, where $\lambda_j \in \mathbb{N}$.
\end{definition}

The elements $a_{i_1},\ldots, a_{i_r}$ are known as the extremal rays of $S$, which are the componentwise smallest elements of $S$ on the each extremal ray of the cone generated by $S$, where
\[
\mathrm{cone(S)} := \left\lbrace \sum_{i=1}^{n} \lambda_i a_i \mid \lambda_i \in \mathbb{R}_{\geq 0}, \forall i \right\rbrace.
\]
Let $S$ be a simplicial affine semigroup, given any element $b \in \mathbb{N}^r \cap \mathrm{cone}(S)$, there exist $\lambda_j \in \mathbb{Q}_{\geq 0}$ such that $b = \sum_{j=1}^r \lambda_j a_{i_j}$. We denote $\mathrm{deg}(b)$ by $\sum_{j=1}^r \lambda_j$.
\begin{definition}
For an element $0 \neq a \in S$, the Apery set of $a$ is defined as 
\[  
\mathrm{Ap}(S,a)= \{b \in S \mid b-a \notin S\},
\]
 and for a subset $E$ of $S,$ the Apery set of $E$ is defined as 
\[
\mathrm{Ap}(S,E)= \{b \in S \mid b-a \notin S, \forall a \in E\}.
\]
\end{definition}

Throughout this article $S = \langle a_1,\ldots,a_r,a_{r+1},\ldots, a_n \rangle $ will denote a simiplicial affine semigroup in $\mathbb{N}^r,$ where $\{a_1,\ldots,a_r\}$ are the extremal rays of $S.$ Consider the partial order $\preceq_S$ on $\mathbb{N}^r,$ where for all $x,y \in \mathbb{N}^r,~ x \preceq_S y$ if $y - x \in S.$ Denote $E = \{a_1,\ldots,a_r\}.$ In this setup, the authors in \cite{jafari-2} define the set of quasi-Frobenius elements of $S.$

\begin{definition}
For any $b \in \max_{\preceq} \mathrm{Ap}(S,E),$ the element $b - \sum_{i=1}^r a_i$ is called a quasi-Frobenius element of $S.$ The set of all quasi-Frobenius elements is denoted by $\mathrm{QF}(S).$
\end{definition}

\begin{proposition}(\cite[Proposition 3.3]{jafari-2})\label{type}
Let $S$ be a simplicial affine semigroup. If $k[S]$ is Cohen-Macaulay then the Cohen-Macaulay type of $k[S]$ is equal to the cardinality of the set $\mathrm{QF}(S).$
\end{proposition}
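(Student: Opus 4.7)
The plan is to obtain the equality via a standard Artinian-reduction argument: quotient $k[S]$ by a homogeneous system of parameters coming from the extremal rays, identify this quotient as a $k$-vector space indexed by $\mathrm{Ap}(S,E)$, and then pin down its socle combinatorially in terms of the $\preceq_S$-maximal elements.

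First I would check that $\theta := \mathbf{t}^{a_1}, \ldots, \mathbf{t}^{a_r}$ is a homogeneous system of parameters. The Krull dimension of $k[S]$ is $r$, since $a_1,\ldots,a_r$ are $\mathbb{Q}$-linearly independent and span $\mathrm{cone}(S)$. The simplicial hypothesis forces each $b \in S$ to have some positive multiple in $\mathbb{N}a_1 + \cdots + \mathbb{N}a_r$; combined with the fact that $S$ meets only finitely many residue classes of the lattice $\mathbb{Z}a_1 + \cdots + \mathbb{Z}a_r$, this yields $|\mathrm{Ap}(S,E)| < \infty$. The Cohen-Macaulayness of $k[S]$ then promotes $\theta$ to a regular sequence, so the Cohen-Macaulay type equals $\dim_k \mathrm{Soc}(k[S]/(\theta))$. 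Because $\mathbf{t}^b \in (\theta)$ iff $b - a_i \in S$ for some $i \in \{1,\ldots,r\}$, the quotient has $k$-basis $\{\mathbf{t}^b : b \in \mathrm{Ap}(S,E)\}$.

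The heart of the proof is a combinatorial identification of the socle. A basis element $\mathbf{t}^b$ with $b \in \mathrm{Ap}(S,E)$ lies in $\mathrm{Soc}(k[S]/(\theta))$ precisely when $b + a_i \notin \mathrm{Ap}(S,E)$ for every generator $a_i$ of $S$ ($i = 1,\ldots,n$). I claim this is equivalent to $b$ being maximal in $\mathrm{Ap}(S,E)$ under $\preceq_S$. The forward implication rests on downward closure of $\mathrm{Ap}(S,E)$: if $b \preceq_S c$ and $c \in \mathrm{Ap}(S,E)$, the identity $c - a_\ell = (c - b) + (b - a_\ell)$ forces $b - a_\ell \notin S$ for each extremal $a_\ell$, so $b \in \mathrm{Ap}(S,E)$. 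Hence a non-maximal $b$, sitting below some $c \in \mathrm{Ap}(S,E)$ with $c - b = \sum c_i a_i$ and some $c_j > 0$, admits the generator $a_j$ with $b + a_j \preceq_S c$ and thus $b + a_j \in \mathrm{Ap}(S,E)$; the converse implication is immediate from the definition of maximality.

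Finally, by definition the assignment $b \mapsto b - \sum_{i=1}^{r} a_i$ is a bijection between $\max_{\preceq_S}\mathrm{Ap}(S,E)$ and $\mathrm{QF}(S)$, giving $\dim_k \mathrm{Soc}(k[S]/(\theta)) = |\mathrm{QF}(S)|$. The main obstacle I anticipate is the socle characterization in the preceding paragraph, especially the downward-closure lemma and the careful translation of non-maximality into the existence of a successor by a generator. The finiteness of $\mathrm{Ap}(S,E)$ also requires an honest appeal to the simplicial hypothesis, since it fails for general affine semigroups.
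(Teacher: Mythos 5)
Your argument is correct, but note that the paper does not prove this proposition at all: it is imported verbatim as \cite[Proposition 3.3]{jafari-2}, so there is no internal proof to compare against. What you have written is the standard Artinian-reduction argument (and, in substance, the argument of the cited source): reduce modulo the system of parameters $\mathbf{t}^{a_1},\ldots,\mathbf{t}^{a_r}$, identify the quotient with the $k$-span of $\{\mathbf{t}^{b}: b\in \mathrm{Ap}(S,E)\}$, and show the socle is spanned by the monomials indexed by $\max_{\preceq_S}\mathrm{Ap}(S,E)$. The socle computation is the right key step, and your downward-closure lemma (via $c-a_\ell=(c-b)+(b-a_\ell)$) together with the observation that a non-maximal $b$ has a successor $b+a_j$ inside the Ap\'ery set is exactly what is needed. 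The one place I would tighten is the finiteness of $\mathrm{Ap}(S,E)$: knowing that $S$ meets only finitely many cosets of $\mathbb{Z}a_1+\cdots+\mathbb{Z}a_r$ does not by itself bound the Ap\'ery set within a coset. A cleaner route is to use simpliciality to find $N$ with $Na_j\in\langle a_1,\ldots,a_r\rangle$ for all $j$, so that every element of $S$ lies in $F+\mathbb{N}E$ for the finite set $F=\{\sum_j\epsilon_j a_j: 0\le \epsilon_j<N\}$, whence $\mathrm{Ap}(S,E)\subseteq F$; equivalently, observe that $k[S]$ is a finite module over $k[\mathbf{t}^{a_1},\ldots,\mathbf{t}^{a_r}]$ because each algebra generator is integral over it, which yields both the finiteness and the system-of-parameters claim in one stroke. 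With that repair the proof is complete.
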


Let $a = (a_1,a_2)$ and $d = (d_1,d_2) \in \mathbb{N}^2$ such that these are $\mathbb{Q}$-linearly independent. For $k \geq 1,$ Define 
\begin{center}
$S_{a,d,k} = \langle a, a+d, a+2d, \ldots a+kd \rangle .$
\end{center}
We always assume that $\{a, a+d, a+2d, \ldots a+kd \}$ is a minimal generating set of $S_{a,d,k}.$ Note that for $k=1$, the semigroup ring $k[S_{a,d,k}]$ is isomorphic to the polynomial ring $k[x_1,x_2]$. Therefore we will assume that $k \geq 2$.

\section{The set  $\mathrm{QF}(S_{a,d,k})$}

\begin{lemma}\label{simplicial}
$S_{a,d,k}$ is a simplicial affine semigroup in $\mathbb{N}^2$ with respect to the extremal rays $a$ and $a+kd.$
\end{lemma}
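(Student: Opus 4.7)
The plan is to verify the two defining conditions of simpliciality directly from the definition. For condition (1), I would check that $a$ and $a+kd$ are $\mathbb{Q}$-linearly independent. Suppose $\mu_1 a + \mu_2(a+kd) = 0$ with $\mu_i \in \mathbb{Q}$. Rewriting this as $(\mu_1+\mu_2)a + k\mu_2 d = 0$ and using the assumed $\mathbb{Q}$-linear independence of $a$ and $d$, we get $k\mu_2 = 0$ and $\mu_1+\mu_2 = 0$. Since $k \geq 2$, this forces $\mu_1 = \mu_2 = 0$.

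For condition (2), the main observation is the convex-combination identity
\[
a+id \;=\; \tfrac{k-i}{k}\, a \;+\; \tfrac{i}{k}\,(a+kd) \qquad (0 \leq i \leq k),
\]
so multiplying by $k$ gives $k(a+id) = (k-i)\,a + i\,(a+kd)$, which is an $\mathbb{N}$-linear combination of $a$ and $a+kd$. Now given any $s \in S_{a,d,k}$, write $s = \sum_{i=0}^{k} c_i (a+id)$ with $c_i \in \mathbb{N}$. Multiplying term-by-term,
\[
k\, s \;=\; \sum_{i=0}^{k} c_i \bigl((k-i)\,a + i\,(a+kd)\bigr) \;=\; \Bigl(\sum_{i=0}^{k}(k-i)c_i\Bigr) a \;+\; \Bigl(\sum_{i=0}^{k} i\, c_i\Bigr)(a+kd),
\]
and both coefficients are non-negative integers. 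So $\alpha = k$ works uniformly in condition (2), with $a$ and $a+kd$ playing the role of $a_{i_1}, a_{i_2}$.

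There is no substantial obstacle here; the only thing one has to notice is the convex combination formula above, which makes it transparent that $a$ and $a+kd$ span the extremal rays of $\mathrm{cone}(S_{a,d,k})$ (the intermediate generators $a+id$ for $1 \leq i \leq k-1$ lie strictly inside the cone, because their coefficients $\tfrac{k-i}{k}$ and $\tfrac{i}{k}$ are both strictly positive). Thus $a$ and $a+kd$ qualify as the extremal-ray generators of the simplicial affine semigroup $S_{a,d,k}$.
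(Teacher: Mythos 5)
Your proof is correct and rests on exactly the same identity the paper uses, namely $k(a+id) = (k-i)a + i(a+kd)$ for the intermediate generators; the paper simply lists these equations and declares the claim clear. Your version is slightly more complete in that you also verify the $\mathbb{Q}$-linear independence of $a$ and $a+kd$ and extend the relation from generators to arbitrary elements of $S_{a,d,k}$, but the substance is identical.
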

\begin{proof}
It is clear from the following equations:
\[
k(a+d) = (k-1)a + (a+kd)
\]
\[
k(a+2d) = (k-2)a + 2(a+kd)
\]
\[
\vdots \qquad\qquad\qquad\qquad \vdots 
\]
\[
k(a+(k-1)d) = a + (k-1)(a+kd)
\]
\end{proof}

\begin{lemma}\label{Aperyset}
Let $E = \{a, a+kd\}.$ The Apery set of $S_{a,d,k}$ with respect to $E$ is 
\[
\mathrm{Ap}(S_{a,d,k},E) = \{0,a+d, a+2d, \ldots, a+(k-1)d\}. 
\]
\end{lemma}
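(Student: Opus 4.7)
The plan is to exploit the $\mathbb{Q}$-linear independence of $a$ and $d$ to convert membership in $S_{a,d,k}$ (and in its translates $S_{a,d,k} - a$, $S_{a,d,k} - (a+kd)$) into purely numerical conditions on two non-negative integers. For any $b \in S_{a,d,k}$ with presentation $b = \sum_{j=0}^{k}\lambda_j(a+jd)$, set $m = \sum_j \lambda_j$ and $n = \sum_j j\lambda_j$, so that $b = ma + nd$; by linear independence the pair $(m,n) \in \mathbb{N}^2$ is intrinsic to $b$, independent of the presentation.

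The first step would be to prove the characterisation
\[
ma + nd \in S_{a,d,k} \iff m \geq 0 \text{ and } 0 \leq n \leq mk.
\]
One direction is immediate from the definitions of $m$ and $n$; the other reduces to writing an integer $n \in [0, mk]$ as a sum of $m$ elements of $\{0,1,\ldots,k\}$, which is a routine division-with-remainder argument using $\lfloor n/k \rfloor$ copies of $k$ together with a single remainder term and padding zeros.

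The second step is to translate the two Apery conditions through this criterion. Since $b - a = (m-1)a + nd$ and $b - (a+kd) = (m-1)a + (n-k)d$, negating the characterisation gives
\[
b - a \notin S_{a,d,k} \iff m = 0 \text{ or } n \geq (m-1)k + 1,
\]
\[
b - (a+kd) \notin S_{a,d,k} \iff m = 0 \text{ or } n \leq k-1,
\]
where in the second equivalence the alternative $n - k > (m-1)k$ is ruled out automatically by $n \leq mk$ (the membership of $b$ in $S_{a,d,k}$).

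The final step is to combine both conditions. If $m = 0$, the range $0 \leq n \leq 0$ forces $b = 0$. If $m \geq 1$, the two conditions collapse to $(m-1)k + 1 \leq n \leq k-1$; since $k \geq 2$, this interval is non-empty only when $m = 1$, in which case $n$ ranges over $\{1,2,\ldots,k-1\}$, yielding exactly $b \in \{a+d, a+2d, \ldots, a+(k-1)d\}$. This exhausts $\mathrm{Ap}(S_{a,d,k}, E)$. The argument is essentially bookkeeping; the only mild subtlety is the careful negation of the conjunctive membership criterion and the observation that one of the disjuncts arising from $b - (a+kd) \notin S_{a,d,k}$ is vacuous once $b \in S_{a,d,k}$ is assumed.
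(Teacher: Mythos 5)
Your proof is correct, and it takes a genuinely different route from the paper's. The paper argues directly on representations $b=\sum_{i=1}^{k-1}\lambda_i(a+id)$: it first shows each $\lambda_i\in\{0,1\}$, then that at most one $\lambda_i$ is nonzero, in each step splitting into cases according to whether the relevant index sum exceeds $k$, and finally it proves the reverse inclusion separately, invoking the minimality of the generating set to rule out $b-a\in S_{a,d,k}$. You instead collapse everything onto the coordinates $(m,n)$ of $b=ma+nd$ and the single membership criterion $ma+nd\in S_{a,d,k}\iff m\ge 0$ and $0\le n\le mk$ (whose nontrivial direction is the division-with-remainder argument you sketch, writing $n$ as a sum of $m$ elements of $\{0,\dots,k\}$). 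After that, the two Ap\'ery conditions become interval constraints on $n$, and the conjunction $(m-1)k+1\le n\le k-1$ forces $m=1$, $n\in\{1,\dots,k-1\}$; your observation that the disjunct $n>mk$ is vacuous is the one point that needs care, and you handle it correctly. Your approach buys a unified treatment of both inclusions with no case analysis, and as a byproduct it shows the standing minimality assumption on the generators is automatic here (each generator has $m=1$, hence a unique representation), whereas the paper's argument stays closer to the semigroup elements themselves and mirrors the classical treatment of numerical semigroups generated by arithmetic sequences.
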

\begin{proof}
Observe that
\[
\mathrm{Ap}(S_{a,d,k},E) \subset \left\lbrace \sum_{i=1}^{k-1} \lambda_i (a+id) \mid \lambda_i \in \mathbb{N} \right\rbrace .
\]
Let $0 \neq b \in \mathrm{Ap}(S_{a,d,k},E),$ therefore for some $\lambda_i 's \in \mathbb{N},$ we can write $b = \sum_{i=1}^{k-1} \lambda_i (a+id).$ We claim that $\lambda_i \in \{0,1\}$ for all $i \in [1,k-1].$ Suppose there exist $\ell \in [1,k-1]$ such that $\lambda_{\ell} > 1.$ Therefore, we can write $b$ as
\[
b = 2 (a + \ell d) + (\lambda_{\ell} - 2)(a + \ell d) + \sum_{i=1, i \neq \ell}^{k-1} \lambda_i (a+id).
\]  
\textbf{Case-1:} Suppose $2 \ell \leq k.$ Therefore we have 
$2 (a + \ell d) - a = a + 2 \ell d \in S_{a,d,k},$ and hence
\[
b-a = (a + 2 \ell d) + (\lambda_{\ell} - 2)(a + \ell d) + \sum_{i=1, i \neq \ell}^{k-1} \lambda_i (a+id) \in S_{a,d,k}.
\]
Which is a contradiction to $b \in \mathrm{Ap}(S_{a,d,k},E).$\\
\textbf{Case-2:} Suppose $2 \ell > k.$ We can write $2 \ell = k + k',$ where $k' \in [0,k-2].$ We can write 
\[
b =  (a + k d) + (a + k' d ) + (\lambda_{\ell} - 2)(a + \ell d) + \sum_{i=1, i \neq \ell}^{k-1} \lambda_i (a+id).
\]
Therefore, we have
\[
b - (a+kd) = (a + k' d ) + (\lambda_{\ell} - 2)(a + \ell d) + \sum_{i=1, i \neq \ell}^{k-1} \lambda_i (a+id) \in S_{a,d,k}.
\]
Which is again a contradiction to $b \in \mathrm{Ap}(S_{a,d,k},E).$ Therefore we conclude that 
\[
\mathrm{Ap}(S_{a,d,k},E) \subseteq \left\lbrace \sum_{i=1}^{k-1} \lambda_i (a+id) \mid \lambda_i \in \{0,1\} \right\rbrace .
\]

Now we claim that $b = a+id$ for some $i \in [1,k-1].$ Suppose  there exist $\ell , \ell ' \in [1,k-1]$ such that $\ell \neq \ell '$ and $\lambda_{\ell} = 1 = \lambda_{\ell '}.$ Therefore we can write
\[
b =  (a + \ell d) + (a + \ell ' d ) + \sum_{i=1, i \neq \ell , \ell '}^{k-1} \lambda_i (a+id), \quad \mathrm{where}~\lambda_i \in \{0,1\}.
\]
\textbf{Case-1:} Suppose $\ell + \ell ' \leq k.$ We have $(a + \ell d) + (a + \ell ' d ) - a = a + (\ell + \ell ') d \in S.$ Therefore, we have 
\[
b-a =  a + (\ell + \ell ') d + \sum_{i=1, i \neq \ell , \ell '}^{k-1} \lambda_i (a+id), \quad \mathrm{where}~\lambda_i \in \{0,1\}.
\]
Hence, we get $b-a \in S_{a,d,k}.$ Which is contradiction to $b \in \mathrm{Ap}(S_{a,d,k},E).$\\
\textbf{Case-2:} Suppose $\ell + \ell ' > k.$ We can write $\ell + \ell ' = k+ \ell'',$ where $\ell'' \in [0,k-3].$ Therefore, we can write $b$ as 
\[
b =  (a + k d) + (a + \ell '' d ) + \sum_{i=1, i \neq \ell , \ell '}^{k-1} \lambda_i (a+id), \quad \mathrm{where}~\lambda_i \in \{0,1\}.
\]
Therefore, we have 
\[
b-(a+kd) =  (a + \ell '' d ) + \sum_{i=1, i \neq \ell , \ell '}^{k-1} \lambda_i (a+id) \in S_{a,d,k}; \quad \mathrm{where}~\lambda_i \in \{0,1\}.
\]
Which is again a contradiction to $b \in \mathrm{Ap}(S_{a,d,k},E).$ Hence the claim follows. Therefore, we conclude that
\[
\mathrm{Ap}(S_{a,d,k},E) \subseteq \{0, a+d, a+2d, \ldots, a+(k-1)d\}. 
\]
Conversely, suppose $b \in \{a+d, a+2d, \ldots, a+(k-1)d\}.$ Therefore $b = a + \ell d$ for some $\ell \in [1,k-1].$ Since $\ell < k,$ It is clear that $b - (a + kd) = (\ell-k)d \notin S_{a,d,k}.$ Hence $b \in \mathrm{Ap}(S_{a,d}, a+kd).$ Also $b -a \notin S_{a,d,k}.$ Suppose $b - a = \ell d \in S_{a,d,k}.$ Since $\ell d \leq a+ \ell d$ (componentwise) and $\ell d \in S_{a,d,k}$, we can write 
\[
\ell d = \sum_{j=1}^{\ell - 1} \lambda_j (a+id); \quad \mathrm{where} ~ \lambda_j \in \mathbb{N}.
\]
Therefore, we have 

\[
b = a + \ell d = (\lambda_0 + 1)a + \sum_{j=1}^{\ell - 1} \lambda_j (a+jd); \quad \mathrm{where} ~ \lambda_j \in \mathbb{N}.
\]
Since $\ell \in [1,k-1]$, we get a contradiction to the minimality of the generating set of $S_{a,d,k}.$ Hence $b - a \notin S_{a,d,k},$ and therefore $b \in \mathrm{Ap}(S_{a,d,k}, a).$ This completes the proof.
\end{proof}

The following theorem gives a useful criterion for the Cohen-Macaulayness of a simplicial affine semigroup.

\begin{theorem}\label{rosalesoncm}(\cite[Cor. 1.6]{rosalesOnCohen-Macaulayness})
Let $S = \langle a_1,\ldots,a_r,a_{r+1},\ldots, a_n \rangle$ be a simiplicial affine semigroup in $\mathbb{N}^r,$ where $\{a_1,\ldots,a_r\}$ are the extremal rays of $S.$ The following statements are equivalent 
\begin{enumerate}[{\rm(i)}]
		\item $k[S]$ is Cohen-Macaulay,
		\item for all $x,y \in \cap_{i=1}^r \mathrm{Ap}(S,a_i),$ if $x \neq y,$ then $x - y \notin G(\{a_1,\ldots,a_r \}),$
	\end{enumerate}
where $G(\{a_1,\ldots,a_r \})$ denote the group generated by $a_1,\ldots,a_r$ in $\mathbb{Z}^r$.
\end{theorem}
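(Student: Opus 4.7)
The plan is to reduce Cohen-Macaulayness to freeness over a polynomial subring, and then translate that freeness into the combinatorial Apéry-set condition.

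First I would observe that, since $a_1, \ldots, a_r$ are $\mathbb{Q}$-linearly independent, the subring $A := k[{\bf t}^{a_1}, \ldots, {\bf t}^{a_r}]$ of $k[S]$ is a polynomial ring in $r$ variables. By the simplicial hypothesis, for each $j > r$ there is an $\alpha_j \in \mathbb{N}_{>0}$ with $\alpha_j a_j \in \mathbb{N}\{a_1, \ldots, a_r\}$, so each ${\bf t}^{a_j}$ is integral over $A$ and $k[S]$ is a finite $A$-module. In particular $\dim k[S] = r$, so ${\bf t}^{a_1}, \ldots, {\bf t}^{a_r}$ form a homogeneous system of parameters. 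Over the regular ring $A$, a finitely generated graded module is Cohen-Macaulay if and only if it is $A$-free (by Auslander--Buchsbaum applied to the multi-graded ring, or equivalently by the flatness characterisation of Cohen-Macaulayness over a Noether normalisation of the same dimension); hence $k[S]$ is Cohen-Macaulay if and only if $k[S]$ is a free $A$-module.

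Next I would show that $W := \bigcap_{i=1}^r \mathrm{Ap}(S, a_i)$ generates $k[S]$ over $A$ via the monomials ${\bf t}^w$. For every $s \in S$, I want a decomposition $s = w + \sum_{i=1}^r \mu_i a_i$ with $w \in W$ and $\mu_i \in \mathbb{N}$: starting from $s$, if the current element $s'$ does not lie in $W$ then $s' - a_i \in S$ for some $i$, and I replace $s'$ by $s' - a_i$. This terminates because the unique $\mathbb{Q}_{\geq 0}$-coefficients of the current element in the $\mathbb{Q}$-basis $a_1, \ldots, a_r$ strictly decrease at each step. Consequently the $A$-linear map
\[
\Phi : \bigoplus_{w \in W} A \cdot e_w \longrightarrow k[S], \qquad e_w \longmapsto {\bf t}^w,
\]
is surjective, and $k[S]$ is $A$-free if and only if $\Phi$ is injective.

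The final step is to see that injectivity of $\Phi$ is precisely condition (ii). A nonzero kernel element of $\Phi$ yields, via the $S$-grading, two distinct decompositions $w + \sum \mu_i a_i = w' + \sum \mu'_i a_i$ of the same element of $S$. Since $a_1, \ldots, a_r$ are $\mathbb{Z}$-independent, $w = w'$ would force $\mu_i = \mu'_i$ for all $i$; so necessarily $w \neq w'$ and $w - w' = \sum (\mu'_i - \mu_i) a_i \in G(\{a_1, \ldots, a_r\})$, contradicting (ii). Conversely, if (ii) fails, pick $w \neq w'$ in $W$ with $w - w' = \sum n_i a_i \in G(\{a_1, \ldots, a_r\})$; splitting each $n_i$ into its positive and negative parts yields two distinct decompositions of a common element of $S$, producing a nonzero element of $\ker \Phi$.

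The main obstacle is the termination argument in the second paragraph, which requires the simplicial hypothesis in an essential way: in a non-simplicial semigroup the reduction $s' \mapsto s' - a_i$ need not reach $W$, and consequently the Apéry-set condition alone would no longer characterise Cohen-Macaulayness. Everything else is a routine translation between the $A$-free structure of $k[S]$ and the additive combinatorics of $S$ modulo $G(\{a_1,\ldots,a_r\})$.
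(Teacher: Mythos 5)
The paper does not actually prove this statement: it is quoted from Rosales and Garc\'{\i}a-S\'{a}nchez \cite[Cor.\ 1.6]{rosalesOnCohen-Macaulayness} with no in-paper argument, so there is nothing internal to compare against. Your proof is a correct self-contained derivation along what is essentially the standard route of the cited source: Cohen--Macaulayness of $k[S]$ is equivalent to freeness over the Noether normalisation $A=k[{\bf t}^{a_1},\ldots,{\bf t}^{a_r}]$; the monomials indexed by $W=\bigcap_{i=1}^r\mathrm{Ap}(S,a_i)$ generate $k[S]$ over $A$ (your termination argument via the $\mathbb{Q}_{\geq 0}$-coordinates, whose sum drops by $1$ at each step and is bounded below, is fine since these coordinates lie in $\tfrac{1}{L}\mathbb{Z}$ for a common denominator $L$); and the two graded computations translating $\ker\Phi\neq 0$ into a difference of Ap\'{e}ry elements lying in $G(\{a_1,\ldots,a_r\})$ are correct. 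The one step you pass over too quickly is the implication that $k[S]$ free over $A$ forces $\Phi$ to be injective: a surjection onto a free module need not be injective in general, so you should add that $\{{\bf t}^w\mid w\in W\}$ is a \emph{minimal} generating set of $k[S]$ over $A$. This is immediate because $\mathfrak{m}_A k[S]$ is spanned by exactly those monomials ${\bf t}^s$ with $s-a_i\in S$ for some $i\leq r$, i.e.\ with $s\notin W$, so the classes of the ${\bf t}^w$ form a $k$-basis of $k[S]/\mathfrak{m}_A k[S]$ and graded Nakayama shows a minimal generating set of a graded free module is a basis. With that sentence added, the argument is complete.
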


\begin{theorem}\label{CohenMacaulayness}
$k[S_{a,d,k}]$ is Cohen-Macaulay.
\end{theorem}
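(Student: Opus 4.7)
The plan is to apply the Rosales criterion (Theorem \ref{rosalesoncm}) with the simplicial structure established in Lemma \ref{simplicial}, so the extremal rays are $a_1 = a$ and $a_2 = a+kd$. The first observation is that, by definition, $\mathrm{Ap}(S_{a,d,k},a) \cap \mathrm{Ap}(S_{a,d,k},a+kd) = \mathrm{Ap}(S_{a,d,k},E)$, which Lemma \ref{Aperyset} computes to be the finite set $\{0, a+d, a+2d, \ldots, a+(k-1)d\}$. Thus it suffices to check that for any two distinct elements $x,y$ of this set, we have $x - y \notin G(\{a, a+kd\})$.

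Note that $G(\{a,a+kd\}) = G(\{a,kd\})$ as subgroups of $\mathbb{Z}^2$, so an element lies in this group exactly when it has the form $\alpha a + \beta (kd)$ for some $\alpha,\beta \in \mathbb{Z}$. The plan is to split into two cases according to whether one of $x,y$ is the zero element. If $x = a+id$ and $y = a+jd$ with $1 \leq i < j \leq k-1$, then $x - y = (i-j)d$; writing this as $\alpha a + \beta k d$ and using the $\mathbb{Q}$-linear independence of $a$ and $d$ forces $\alpha = 0$ and $\beta = (i-j)/k$, which is not an integer since $0 < |i-j| < k$. If $x = 0$ and $y = a+jd$ with $1 \leq j \leq k-1$, then similarly $-(a+jd) = \alpha a + \beta k d$ gives $\alpha = -1$ and $\beta = -j/k$, again not an integer.

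Both cases yield the required contradiction, so the criterion in Theorem \ref{rosalesoncm} is satisfied and $k[S_{a,d,k}]$ is Cohen-Macaulay. The only nontrivial input is Lemma \ref{Aperyset}, which has already been proved; the rest is a short divisibility argument exploiting that $k-1 < k$ so no nonzero multiple of $d$ of size less than $kd$ can lie in $G(\{a,kd\})$. I do not anticipate any real obstacle beyond correctly bookkeeping the two cases.
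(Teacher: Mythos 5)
Your proposal is correct and follows essentially the same route as the paper: both invoke the Rosales criterion (Theorem \ref{rosalesoncm}) together with the Ap\'{e}ry set computation of Lemma \ref{Aperyset}, and both derive the contradiction from the $\mathbb{Q}$-linear independence of $a$ and $d$ by comparing coefficients. Your handling of $G(\{a,a+kd\})$ as $\mathbb{Z}a+\mathbb{Z}(kd)$ and your explicit treatment of the case $x=0$ are slightly tidier than the paper's, but the substance is identical.
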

\begin{proof}
let $b , b' \in \mathrm{Ap}(S_{a,d,k},E),$ where $E = \{ a, a+d\}$ and $b \neq b'.$ Suppose $b = a + \ell d$ and $b' = a+ \ell ' d$, where $l,l' \in [1,k-1].$ Observe that $b-b' = (\ell - \ell ')d \notin G(\{a, a+kd \}).$ Suppose $b-b' \in G(\{a, a+kd \}),$ there exist $s_1,s_2 \in \langle a , a+kd \rangle$ such that $b-b' = (\ell - \ell ')d = s_1-s_2.$ Therefore, for some $\lambda_1, \lambda_2, \lambda_1', \lambda_2' \in \mathbb{N},$  we get
\begin{align*}
(\ell - \ell ')d &= \lambda_1 a + \lambda_2 (a+kd) - \lambda_1' a - \lambda_2' (a+kd)\\
&= (\lambda_1 + \lambda_2 - \lambda_1' - \lambda_2') a + (\lambda_2 - \lambda_2')k d.
\end{align*}

Since $a$ and $d$ are linearly independent over $\mathbb{Q}$, we have $|(\lambda_2 - \lambda_2')k| = |\ell - \ell '|.$ Which is absurd, since $0 < |\ell - \ell '| < k$ but $|(\lambda_2 - \lambda_2')k|$ is either zero or $|(\lambda_2 - \lambda_2')k| \geq k.$ Also, note that for any $b \in \mathrm{Ap}(S_{a,d,k},E)$, $b \notin G(\{a, a+kd \}).$ Hence by Theorem \ref{rosalesoncm}, $k[S_{a,d,k}]$ is Cohen-Macaulay.
\end{proof}

\begin{theorem}\label{typeSad}
$\mathrm{QF}(S_{a,d,k}) = \{ -(a+d), -(a+2d), \ldots, -(a+(k-1)d)\}.$ In particular, Cohen-Macaulay type of $k[S_{a,d,k}]$ is $k-1.$
\end{theorem}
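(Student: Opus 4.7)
The plan is to use Lemma \ref{Aperyset} together with the definition of $\mathrm{QF}(S)$. Since $\mathrm{Ap}(S_{a,d,k},E) = \{0, a+d, a+2d, \ldots, a+(k-1)d\}$ and the sum of the extremal rays is $a + (a+kd) = 2a+kd$, once I identify the maximal elements of the Apery set under $\preceq_S$ I can read off $\mathrm{QF}(S_{a,d,k})$ by subtracting $2a+kd$ from each maximal element.

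The central computation is to determine the maximal elements of $\mathrm{Ap}(S_{a,d,k},E)$ with respect to $\preceq_S$. First I would note that $0 \prec_S a+d$, so $0$ is not maximal. For distinct $i,j \in [1,k-1]$, comparing $a+id$ and $a+jd$ reduces to asking whether a nonzero multiple $md$ (with $|m| \leq k-2$) can lie in $S_{a,d,k}$. The key lemma here is: for every integer $m$ with $1 \leq m \leq k-1$, one has $md \notin S_{a,d,k}$. To prove this, suppose $md = \sum_{i=0}^{k} \lambda_i (a+id)$ with $\lambda_i \in \mathbb{N}$. Grouping coefficients of $a$ and $d$ and using the $\mathbb{Q}$-linear independence of $a$ and $d$ forces
\[
\sum_{i=0}^{k} \lambda_i = 0 \quad \text{and} \quad \sum_{i=0}^{k} i\lambda_i = m,
\]
and the first equality (together with $\lambda_i \geq 0$) forces all $\lambda_i=0$, contradicting $m \geq 1$. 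For $i < j$ in $[1,k-1]$, the difference $(a+jd)-(a+id) = (j-i)d$ is therefore not in $S_{a,d,k}$; and for $i > j$ the difference has negative second coordinate (because $d \in \mathbb{N}^2\setminus\{0\}$) and so fails to lie in $\mathbb{N}^2$, let alone in $S_{a,d,k}$. Hence each of $a+d, \ldots, a+(k-1)d$ is maximal under $\preceq_S$, and these are all the maximal elements.

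Finally, applying the definition of quasi-Frobenius element yields for each $i \in [1,k-1]$:
\[
(a+id) - (2a+kd) = -a - (k-i)d = -(a+(k-i)d),
\]
so as $i$ ranges over $[1,k-1]$, the element $k-i$ ranges over $[1,k-1]$ and we obtain
\[
\mathrm{QF}(S_{a,d,k}) = \{-(a+d), -(a+2d), \ldots, -(a+(k-1)d)\},
\]
a set of cardinality $k-1$. Combining Theorem \ref{CohenMacaulayness} with Proposition \ref{type} gives that the Cohen--Macaulay type of $k[S_{a,d,k}]$ equals $\#\mathrm{QF}(S_{a,d,k}) = k-1$.

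I expect the main (albeit modest) obstacle to be the careful verification that no nonzero multiple $md$ of $d$ with $1 \leq m \leq k-1$ belongs to $S_{a,d,k}$, as this is what forces the Apery elements $a+d, \ldots, a+(k-1)d$ to be pairwise incomparable and hence all maximal; once this is in hand the rest is bookkeeping.
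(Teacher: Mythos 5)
Your proof is correct and follows essentially the same route as the paper: read off $\max_{\preceq}\mathrm{Ap}(S_{a,d,k},E)$ from Lemma \ref{Aperyset}, subtract the sum of extremal rays $2a+kd$ from each maximal element, and invoke Proposition \ref{type} together with Theorem \ref{CohenMacaulayness}. The only difference is that you explicitly justify, via the linear-independence argument showing $md\notin S_{a,d,k}$ for $m\geq 1$, why the nonzero Ap\'ery elements are pairwise incomparable and hence all maximal --- a step the paper asserts without proof.
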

\begin{proof}
From lemma \ref{Aperyset}, we see that
\[
\max_{\preceq} \mathrm{Ap}(S_{a,d,k},E) = \{a+d, a+2d, \ldots, a+(k-1)d \}
\]
Therefore by lemma \ref{simplicial}, we have
\[
\mathrm{QF}(S_{a,d,k}) = \{ -(a+d), -(a+2d), \ldots, -(a+(k-1)d)\}.
\]
In particular, by Theorem \ref{type}, Cohen-Macaulay type of $k[S_{a,d,k}]$ is equal to $|\mathrm{QF}(S_{a,d,k})| = k-1.$
\end{proof}

\begin{corollary}\label{Gorenstein}
$k[S_{a,d,k}]$ is Gorenstein if and only if $k=2.$
\end{corollary}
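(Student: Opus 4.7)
The plan is to observe that this corollary is an immediate consequence of the two preceding theorems together with the standard characterization of the Gorenstein property for Cohen--Macaulay rings. Recall that a Cohen--Macaulay local (or, in our graded setting, $*$local) ring is Gorenstein if and only if its Cohen--Macaulay type equals $1$.

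First I would invoke Theorem \ref{CohenMacaulayness} to ensure that $k[S_{a,d,k}]$ is Cohen--Macaulay, so that the Gorenstein criterion in terms of the type applies. Next I would cite Theorem \ref{typeSad}, which computes the type explicitly as $k-1$. The equivalence then reduces to the arithmetic statement $k-1=1 \iff k=2$, keeping in mind the running assumption $k\geq 2$ stated at the end of the preliminaries. Both implications follow at once from this equality: if $k=2$ the type is $1$ and hence the ring is Gorenstein, and if $k[S_{a,d,k}]$ is Gorenstein then $k-1=1$ forces $k=2$.

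There is no substantial obstacle here, which is precisely why the statement is packaged as a corollary rather than a theorem. The only thing worth double-checking is that the graded Gorenstein criterion via the Cohen--Macaulay type is applicable in the present setting of a positively multigraded affine semigroup ring; since $k[S_{a,d,k}]$ has a unique maximal graded ideal (the irrelevant ideal), the equivalence between Gorensteinness and type one carries over from the local case verbatim, so I would simply cite a standard reference such as Bruns--Herzog for this. Thus the proof reduces to two lines of bookkeeping and a one-line arithmetic argument.
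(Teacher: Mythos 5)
Your proposal is correct and follows exactly the paper's own argument: Cohen--Macaulayness from Theorem \ref{CohenMacaulayness}, type $k-1$ from Theorem \ref{typeSad}, and the standard equivalence between Gorensteinness and type one for Cohen--Macaulay rings. Your extra remark about the graded setting is a reasonable precaution but not something the paper dwells on.
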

\begin{proof}
If $k=2,$ then by Theorem  \ref{typeSad}, we have $\mathrm{type}~ k[S_{a,d,k}] = 1$ and hence $k[S_{a,d,k}]$ is Gorenstein. Conversely, if $k[S_{a,d,k}]$ is Gorenstein then $\mathrm{type}~ k[S_{a,d,k}] = 1.$ By Theorem  \ref{typeSad}, we have $k=2.$ 
\end{proof}

An affine semigroup $S$ is called normal if $S = \bar{S}$, where
\[
\bar{S} = \{a \in G(S) \mid \alpha a\in S ~\mathrm{for~some}~ \alpha \in \mathbb{N}\} = \mathrm{cone}(S) \cap G(S).
\]
$\bar{S}$ is called normalization of $S$. The affine semigroup ring $k[S]$ is normal ring if and only if $S$ is normal affine semigroup. In \cite[Theorem 4.6]{jafari-2}, authors gives a criterion for a simplicial affine semigroup $S$ being normal in terms of quasi-Frobenius elements and the relative interior of the $\mathrm{cone}(S).$ The relative interior of $\mathrm{cone}(S)$ is defined as
\[
\mathrm{relint(S)} := \left\lbrace a \in \mathrm{cone}(S) \mid a = \sum_{i=1}^{r} \lambda_i a_i~ \mathrm{with} ~ \lambda_i \in \mathbb{R}_{> 0}, \forall i = 1,\ldots,r \right\rbrace.
\]

%\begin{theorem}\label{Snormal}
%The following are equivalent:
%\begin{enumerate}[{\rm(i)}]
%		\item $S$ is normal,
%		\item $- ~ \mathrm{QF}(S) \subseteq \mathrm{relint}(S).$
%	\end{enumerate}
%\end{theorem}
%\begin{proof}
%See (\cite[Theorem 4.6]{jafari-2})
%\end{proof}

\begin{corollary}\label{normal}
$k[S_{a,d,k}]$ is a normal ring.
\end{corollary}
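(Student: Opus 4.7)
The plan is to prove normality directly from the definition, by showing that every lattice point in $\mathrm{cone}(S_{a,d,k})$ that lies in the group $G(S_{a,d,k})$ actually belongs to $S_{a,d,k}$. The criterion of Jafari et al.\ \cite[Theorem 4.6]{jafari-2} could also be applied using the explicit description of $\mathrm{QF}(S_{a,d,k})$ from Theorem \ref{typeSad}, but the direct argument is short enough to present on its own.

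First I would identify the ambient group. Since $a,\,a+d\in S_{a,d,k}$, we have $d=(a+d)-a\in G(S_{a,d,k})$, so $G(S_{a,d,k})=\mathbb{Z}a+\mathbb{Z}d$. By Lemma \ref{simplicial} the cone is $\mathrm{cone}(S_{a,d,k})=\mathbb{R}_{\geq 0}\,a+\mathbb{R}_{\geq 0}(a+kd)$. Take an arbitrary $b\in\mathrm{cone}(S_{a,d,k})\cap G(S_{a,d,k})$ and write $b=\alpha a+\beta d$ with $\alpha,\beta\in\mathbb{Z}$, and also $b=\lambda_1 a+\lambda_2(a+kd)=(\lambda_1+\lambda_2)a+k\lambda_2 d$ with $\lambda_1,\lambda_2\in\mathbb{R}_{\geq 0}$. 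Since $\{a,d\}$ is $\mathbb{Q}$-linearly independent, the coefficients must match, giving $\lambda_2=\beta/k$ and $\lambda_1=\alpha-\beta/k$. Hence $\alpha,\beta\in\mathbb{Z}_{\geq 0}$ and $\beta\leq k\alpha$.

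Next I would use the division algorithm to write $\beta=qk+r$ with $0\le r\le k-1$ and $q\in\mathbb{Z}_{\geq 0}$. The inequality $\beta\leq k\alpha$ gives $\alpha\geq q+r/k$; since $\alpha$ is an integer this forces $\alpha\geq q$ in the case $r=0$, and $\alpha\geq q+1$ in the case $r\geq 1$. I would then split into two cases. If $r=0$, then
\[
b=\alpha a+qk\,d=(\alpha-q)\,a+q\,(a+kd)\in S_{a,d,k}.
\]
If $1\leq r\leq k-1$, then
\[
b=\alpha a+(qk+r)d=(\alpha-q-1)\,a+q\,(a+kd)+(a+rd),
\]
which is a nonnegative integer combination of the generators $a,\,a+d,\ldots,a+kd$, since $\alpha-q-1\geq 0$ and $a+rd$ is one of the listed minimal generators. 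Hence $b\in S_{a,d,k}$, so $\bar{S}_{a,d,k}\subseteq S_{a,d,k}$ and $S_{a,d,k}$ is normal, whence $k[S_{a,d,k}]$ is a normal ring.

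There is essentially no main obstacle here; the argument is essentially the one-step Euclidean division of $\beta$ by $k$, and the only place where anything is being used nontrivially is the $\mathbb{Q}$-linear independence of $a$ and $d$, which pins down the integers $\alpha,\beta$ uniquely and therefore lets the cone inequality $\beta\leq k\alpha$ be read off. If one instead wanted to invoke \cite[Theorem 4.6]{jafari-2}, the verification would reduce to checking that each $-(a+id)\in\mathrm{QF}(S_{a,d,k})$ fails to lie in $\mathrm{relint}(\mathrm{cone}(S_{a,d,k}))$, which is immediate because the coefficient of $a$ in $-(a+id)=-a+(i-k)d$ is negative; this gives a one-line alternative proof.
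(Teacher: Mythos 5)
Your main argument is correct, and it takes a genuinely different route from the paper. The paper does not argue from the definition of normality at all: it invokes the criterion of \cite[Theorem 4.6]{jafari-2} that a simplicial affine semigroup $S$ is normal if and only if $-\,\mathrm{QF}(S)\subseteq\mathrm{relint}(S)$, and then simply observes that each $a+\ell d=\tfrac{\ell}{k}(a+kd)+\bigl(1-\tfrac{\ell}{k}\bigr)a$ has both coefficients strictly positive, so that $-\,\mathrm{QF}(S_{a,d,k})=\{a+d,\ldots,a+(k-1)d\}$ (computed in Theorem \ref{typeSad}) lies in the relative interior. Your direct verification that $\mathrm{cone}(S_{a,d,k})\cap G(S_{a,d,k})\subseteq S_{a,d,k}$ via the identification $G(S_{a,d,k})=\mathbb{Z}a+\mathbb{Z}d$, the cone inequalities $0\le\beta\le k\alpha$, and Euclidean division $\beta=qk+r$ is complete and self-contained; it buys independence from the machinery of \cite{jafari-2} and from Theorem \ref{typeSad}, at the cost of being slightly longer than the paper's two-line deduction, which in turn leverages work already done. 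One caveat: your closing aside misstates the criterion of \cite[Theorem 4.6]{jafari-2}. The condition is that the \emph{negatives} of the quasi-Frobenius elements lie in $\mathrm{relint}$, i.e.\ $-\,\mathrm{QF}(S)\subseteq\mathrm{relint}(S)$; it is not the assertion that the quasi-Frobenius elements themselves fail to lie in the relative interior. (Also $-(a+id)=-a-id$, not $-a+(i-k)d$.) The correct one-line alternative is exactly the paper's: check that $a+id\in\mathrm{relint}(S_{a,d,k})$ for $1\le i\le k-1$. This does not affect your main proof, which stands on its own.
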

\begin{proof}
From \cite[Theorem 4.6]{jafari-2}, a simplicial affine semigroup $S$ is normal if and only if $- ~ \mathrm{QF}(S) \subseteq \mathrm{relint}(S).$ For $1 \leq \ell \leq k-1,$ we have $a + \ell d = \frac{\ell}{k}(a +kd) + (1 - \frac{\ell}{k})a.$ Therefore by Theorem \ref{typeSad}, we have $- ~ \mathrm{QF}(S_{a,d,k}) \subseteq \mathrm{relint}(S_{a,d,k}).$ 
Hence $S_{a,d,k}$ is normal.
\end{proof}

For an affine semigroup $S$, the semigroup ring $k[S]$ has a unique monomial maximal ideal, let us denote it by $\mathfrak{m}$. The associated graded ring of $k[S]$ is defined as
\[
\mathrm{gr}_{\mathfrak{m}}(k[S]) = \bigoplus_{i=0}^{\infty} \frac{{\mathfrak{m}^i}}{{\mathfrak{m}}^{i+1}}
\]

\begin{corollary}
The associated graded ring $\mathrm{gr}_{\mathfrak{m}}(k[S_{a,d,k}])$ is isomorphic to $k[S_{a,d,k}]$.
\end{corollary}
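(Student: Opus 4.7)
The plan is to deduce this directly from the fact, noted in the introduction and a consequence of Theorem \ref{generatingset}, that the defining ideal $I_{S_{a,d,k}}$ is homogeneous with respect to the standard grading on $k[x_1,\ldots,x_{k+1}]$ (where $\deg x_i = 1$ for each $i$). The key conceptual observation is the general principle: whenever a quotient of a polynomial ring by a standard-graded homogeneous ideal yields a standard graded $k$-algebra $R$, the associated graded ring with respect to the irrelevant maximal ideal is canonically isomorphic to $R$ itself.

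First I would set $R := k[x_1,\ldots,x_{k+1}]/I_{S_{a,d,k}} \cong k[S_{a,d,k}]$. Since $I_{S_{a,d,k}}$ is standard-graded homogeneous, $R$ inherits a decomposition $R = \bigoplus_{n \geq 0} R_n$ with $R_0 = k$, and $R$ is generated as a $k$-algebra by $R_1$, i.e.\ by the images of $x_1,\ldots,x_{k+1}$. The unique monomial maximal ideal $\mathfrak{m}$ of $R$ is precisely the image of $(x_1,\ldots,x_{k+1})$, which equals the irrelevant ideal $\bigoplus_{n \geq 1} R_n$.

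Next I would verify that $\mathfrak{m}^n = \bigoplus_{i \geq n} R_i$ for every $n \geq 0$. One inclusion is clear since $\mathfrak{m}$ consists of elements of positive degree; for the other, every element of $R_i$ with $i \geq n$ is a $k$-linear combination of products of $i \geq n$ elements of $R_1 \subseteq \mathfrak{m}$. Consequently
\[
\mathfrak{m}^n / \mathfrak{m}^{n+1} \;\cong\; R_n
\]
as $k$-vector spaces for every $n$, and these isomorphisms assemble into a graded $k$-algebra isomorphism $\mathrm{gr}_{\mathfrak{m}}(R) \cong R$ because multiplication in the associated graded ring is compatible with multiplication in $R$ on the graded pieces.

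There is essentially no technical obstacle here once the standard-graded homogeneity of $I_{S_{a,d,k}}$ is in hand; the only point to be careful about is making sure to invoke Theorem \ref{generatingset} to justify that the binomial generators have equal standard degrees on both sides, so that the grading on $R$ is genuinely standard. The final isomorphism is then a formal consequence of this, and one may state it simply as: $k[S_{a,d,k}]$ is a standard graded $k$-algebra, so it is isomorphic to its own associated graded ring with respect to $\mathfrak{m}$.
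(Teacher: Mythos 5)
Your argument is correct, but it is a genuinely different route from the one in the paper. The paper never touches the presentation of the ring: it observes that $a+id = \frac{k-i}{k}\,a + \frac{i}{k}(a+kd)$, so every minimal generator of $S_{a,d,k}$ has $\mathrm{deg}=1$ with respect to the extremal rays $a$ and $a+kd$, and then invokes \cite[Remark 1.1]{jafaritangentcone}, a semigroup-theoretic criterion for $\mathrm{gr}_{\mathfrak{m}}(k[S])\cong k[S]$ for simplicial affine semigroups whose generators all lie in degree one of the cone. You instead work on the polynomial-ring side: once $I_{S_{a,d,k}}$ is known to be homogeneous for the standard grading, $k[S_{a,d,k}]$ is a standard graded $k$-algebra with $\mathfrak{m}$ equal to its irrelevant ideal, and the identification $\mathfrak{m}^n=\bigoplus_{i\geq n}R_i$ gives the isomorphism formally. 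Both proofs are sound; yours is more elementary and self-contained, at the cost of appealing to Theorem \ref{generatingset}, which is proved only in the following section (there is no circularity, but the dependence is heavier than necessary). In fact you do not need Theorem \ref{generatingset} at all: a binomial $\prod x_i^{\alpha_i}-\prod x_j^{\beta_j}$ lies in $I_{S_{a,d,k}}$ exactly when $\sum_i\alpha_i\bigl(a+(i-1)d\bigr)=\sum_j\beta_j\bigl(a+(j-1)d\bigr)$, and the $\mathbb{Q}$-linear independence of $a$ and $d$ forces $\sum_i\alpha_i=\sum_j\beta_j$, so $I_{S_{a,d,k}}$ is standard-graded homogeneous from the outset. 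With that substitution your proof becomes both self-contained and independent of the later sections, which is arguably cleaner than the paper's reliance on an external remark.
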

\begin{proof}
Observe that $a+id = (\frac{k-i}{k})a + \frac{i}{k}(a+kd)$ for all $i=1,\ldots,k-1$. Since $a$ and $a+kd$ are the extremal rays of $S_{a,d,k}$, we have $\mathrm{deg}(a+id) = 1$ for all $i=1,\ldots,k-1$. Now the result follows from the \cite[Remark 1.1]{jafaritangentcone}.
\end{proof}

\section{Minimal generating set for $I_{S_{a,d,k}}$}
Throughout the section $S = \langle a_1,\ldots,a_r,a_{r+1},\ldots, a_n \rangle $ will denote a simiplicial affine semigroup in $\mathbb{N}^r$ and  $ E =\{a_1,\ldots,a_r\}$ will denote the set of extremal rays of $S.$ Since, for each elements $a(\neq 0) \in S,$ we have $\alpha a \in \langle a_1,\ldots,a_r \rangle$ for some positive integer $\alpha$. Therefore $\{{\bf t}^{a_1}, \ldots, {\bf t}^{a_r}\}$ forms a system of parameter for $k[S]$.
\begin{lemma}\label{k-dim I_S} 
Let $S = \langle a_1, \ldots , a_r,a_{r+1}, \ldots, a_n \rangle \subset \mathbb{N}^r$ be a simplicial affine semigroup such that $ a_1, \ldots , a_r$ are the extremal rays of $S$. Then
\[
\dim_k \frac{k[x_1,\ldots,x_n]}{I_S + \langle x_1,\ldots,x_r \rangle} = |\mathrm{Ap}(S,E)|
\]
\end{lemma}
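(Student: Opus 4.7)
The plan is to transport the computation from the polynomial ring to the semigroup ring itself, where the monomial structure makes everything transparent. First I would use the canonical surjection $\pi \colon k[x_1,\ldots,x_n] \to k[S]$ with kernel $I_S$, which gives an isomorphism
\[
\frac{k[x_1,\ldots,x_n]}{I_S + \langle x_1,\ldots,x_r\rangle} \;\cong\; \frac{k[S]}{\langle \mathbf{t}^{a_1},\ldots,\mathbf{t}^{a_r}\rangle},
\]
so the left-hand dimension equals the dimension of the right-hand side as a $k$-vector space.

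Next I would exploit the $S$-grading $k[S] = \bigoplus_{s \in S} k\,\mathbf{t}^s$. Since the ideal $J := \langle \mathbf{t}^{a_1},\ldots,\mathbf{t}^{a_r}\rangle$ is generated by monomials, it decomposes as
\[
J \;=\; \bigoplus_{\substack{s \in S \\ s - a_i \in S \text{ for some } i \leq r}} k\,\mathbf{t}^s,
\]
because a general element $\sum_{i=1}^{r} f_i\, \mathbf{t}^{a_i}$ with $f_i \in k[S]$ is a $k$-linear combination of monomials $\mathbf{t}^{a_i + b}$ with $b \in S$. Consequently, the complementary monomials
\[
\{\,\mathbf{t}^s \mid s \in S,\; s - a_i \notin S \text{ for all } i = 1,\ldots,r\,\} \;=\; \{\,\mathbf{t}^s \mid s \in \mathrm{Ap}(S,E)\,\}
\]
descend to a $k$-basis of the quotient $k[S]/J$.

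From here, counting gives $\dim_k k[S]/J = |\mathrm{Ap}(S,E)|$, which proves the formula. The only thing that needs a brief remark is why the quotient is finite-dimensional at all, and hence why $\mathrm{Ap}(S,E)$ is finite; this is precisely the observation recorded just before the lemma, namely that $\{\mathbf{t}^{a_1},\ldots,\mathbf{t}^{a_r}\}$ is a system of parameters of $k[S]$, so modding out produces an Artinian quotient. The main potential pitfall is being careful that linear independence of the monomials $\mathbf{t}^s$ with $s \in \mathrm{Ap}(S,E)$ in the quotient is honest: this is immediate because the $S$-graded decomposition above shows that $J$ contains no nonzero element of degree $s \in \mathrm{Ap}(S,E)$, so distinct Apéry monomials remain linearly independent modulo $J$.
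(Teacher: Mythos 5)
Your proof is correct, and it takes a genuinely different route from the paper only in the sense that the paper offers no argument at all: its entire proof is a citation of \cite[Theorem 3.3]{ojeda-tenorio}, whereas you give a self-contained derivation. Both of your key steps are sound. The surjection $\pi$ with kernel $I_S$ does induce an isomorphism $k[x_1,\ldots,x_n]/(I_S+\langle x_1,\ldots,x_r\rangle)\cong k[S]/\langle \mathbf{t}^{a_1},\ldots,\mathbf{t}^{a_r}\rangle$, and since $\langle \mathbf{t}^{a_1},\ldots,\mathbf{t}^{a_r}\rangle$ is a monomial (hence $S$-homogeneous) ideal whose graded support is exactly $\{s\in S \mid s-a_i\in S\ \text{for some}\ i\le r\}$, and each $S$-graded component of $k[S]$ is one-dimensional, the complementary monomials $\{\mathbf{t}^s \mid s\in\mathrm{Ap}(S,E)\}$ honestly descend to a $k$-basis of the quotient. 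What your argument buys is transparency and independence from the external reference, which in the paper is stated in the more elaborate language of the short resolution of $k[S]$ over $k[x_1,\ldots,x_r]$; your computation isolates precisely the elementary fact being used. The closing remark on finiteness via the system of parameters is a reasonable sanity check but is not logically needed: the equality of cardinalities holds whether or not the Ap\'ery set is finite, since the basis you exhibit is indexed by $\mathrm{Ap}(S,E)$ in any case.
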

\begin{proof}
Follows from \cite[Theorem 3.3]{ojeda-tenorio}.
\end{proof}

\begin{theorem}\label{k-dim J suset I_S}
Let $S = \langle a_1, \ldots , a_r,a_{r+1}, \ldots, a_n \rangle \subset \mathbb{N}^r$ be a simplicial Cohen-Macaulay affine semigroup and $J$ be an ideal of $k[x_1,\ldots,x_n]$ such that $J \subset I_S$. If 
\[
\dim_k \frac{k[x_1,\ldots,x_n]}{J + \langle x_1,\ldots,x_r \rangle} = |\mathrm{Ap}(S,E)|
\]
then $J = I_S$.
\end{theorem}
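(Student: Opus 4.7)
The plan is to reduce the theorem to a Nakayama argument powered by the Cohen--Macaulay hypothesis. First, since $J \subseteq I_S$, there is a natural surjection
\[
R/(J+\langle x_1,\ldots,x_r\rangle) \twoheadrightarrow R/(I_S+\langle x_1,\ldots,x_r\rangle),
\]
where $R = k[x_1,\ldots,x_n]$. By Lemma \ref{k-dim I_S} the target has $k$-dimension $|\mathrm{Ap}(S,E)|$, which by hypothesis matches the $k$-dimension of the source. Since both numbers are finite, the surjection must be an isomorphism, yielding the key congruence
\[
J + \langle x_1,\ldots,x_r\rangle \;=\; I_S + \langle x_1,\ldots,x_r\rangle.
\]

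Next I would upgrade this congruence to equality using Cohen--Macaulayness. Since $\{{\bf t}^{a_1},\ldots,{\bf t}^{a_r}\}$ is a system of parameters for $k[S]$ (the $a_i$ being the extremal rays) and $k[S] = R/I_S$ is Cohen--Macaulay of Krull dimension $r$, the sequence $x_1,\ldots,x_r$ is a regular sequence on $R/I_S$. Via the Koszul complex this gives $\mathrm{Tor}_1^R(R/\langle x_1,\ldots,x_r\rangle,\,R/I_S) = 0$. Consequently, tensoring the short exact sequence
\[
0 \longrightarrow I_S/J \longrightarrow R/J \longrightarrow R/I_S \longrightarrow 0
\]
over $R$ with $R/\langle x_1,\ldots,x_r\rangle$ preserves left-exactness, producing
\[
0 \to (I_S/J)\big/\langle x_1,\ldots,x_r\rangle(I_S/J) \to R/(J+\langle x_1,\ldots,x_r\rangle) \to R/(I_S+\langle x_1,\ldots,x_r\rangle) \to 0.
\]
By the first step the right-hand map is an isomorphism, hence $\langle x_1,\ldots,x_r\rangle(I_S/J) = I_S/J$.

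Finally, I would invoke graded Nakayama to finish. The ideal $I_S$ is homogeneous for the standard positive $\mathbb{N}$-grading on $R$ (being binomial), and in the envisioned applications $J$ is also homogeneous, so $I_S/J$ is a finitely generated positively-graded $R$-module on which the positive-degree elements $x_1,\ldots,x_r$ act with $\langle x_1,\ldots,x_r\rangle(I_S/J) = I_S/J$; graded Nakayama then forces $I_S/J = 0$, i.e.\ $J = I_S$. The conceptual crux I anticipate is the Tor-vanishing step: this is the sole place where the Cohen--Macaulay hypothesis enters, and without it the short exact sequence need not remain exact after reduction modulo $\langle x_1,\ldots,x_r\rangle$, so the dimension equality alone would fail to pin down $J$.
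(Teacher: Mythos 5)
Your argument is correct and is essentially the paper's own proof: both start from the short exact sequence $0 \to I_S/J \to R/J \to R/I_S \to 0$, tensor with $R/\langle x_1,\ldots,x_r\rangle$, kill $\mathrm{Tor}_1$ using the regularity of $x_1,\ldots,x_r$ on the Cohen--Macaulay ring $R/I_S$, use the dimension hypothesis together with Lemma \ref{k-dim I_S} to identify the two quotients, and conclude $\langle x_1,\ldots,x_r\rangle(I_S/J)=I_S/J$, whence $J=I_S$ by Nakayama. If anything you are slightly more careful than the paper in flagging that the Nakayama step needs a graded (or otherwise suitable) structure on $I_S/J$, which holds in the intended applications where $J$ is generated by binomials in $I_S$.
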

\begin{proof}
Consider the exact sequence
\[
0 \rightarrow I_S/J \rightarrow k[x_1,\ldots,x_n]/J \rightarrow k[x_1,\ldots,x_n]/I_S \rightarrow 0
\]
Tensoring this sequence with $k[x_1,\ldots,x_n]/\langle x_1,\ldots,x_r \rangle$, we get long exact sequence of Tor modules
\begin{center}
$\cdots \rightarrow \mathrm{Tor}_1\left(\frac{k[x_1,\ldots,x_n]}{I_S},\frac{k[x_1,\ldots,x_n]}{ \langle x_1,\ldots,x_r \rangle}\right) \rightarrow  \frac{I_S/J}{\langle x_1,\ldots,x_r \rangle I_S/J} \rightarrow \frac{k[x_1,\ldots,x_n]}{J + \langle x_1,\ldots,x_r \rangle} \rightarrow \frac{k[x_1,\ldots,x_n]}{I_S + \langle x_1,\ldots,x_r \rangle} \rightarrow 0$
\end{center}
Since $\mathrm{dim}_k \frac{k[x_1,\ldots,x_n]}{J + \langle x_1,\ldots,x_r \rangle} = |\mathrm{Ap}(S,E)|$, we have $ \frac{k[x_1,\ldots,x_n]}{J + \langle x_1,\ldots,x_r \rangle} = \frac{k[x_1,\ldots,x_n]}{I_S + \langle x_1,\ldots,x_r \rangle}$ by Lemma \ref{k-dim I_S}. Since $S$ is Cohen-Macaulay and $\{x_1,\ldots,x_r\}$ is a regular sequence on $k[x_1,\ldots,x_n]/I_S$, we have $\mathrm{Tor}_1(\frac{k[x_1,\ldots,x_n]}{I_S},$\\ $\frac{k[x_1,\ldots,x_n]}{ \langle x_1,\ldots,x_r \rangle}) = 0$. Hence from the long exact sequence, we have $\frac{I_S/J}{\langle x_1,\ldots,x_r \rangle I_S/J} = 0.$ Therefore by Nakayama's lemma, we conclude that $I_S = J$.
\end{proof}

%Suppose $k$ is even. Write $k = 2 \ell.$ Define 
%\begin{itemize}
%\item $\xi_{2 \ell} = \{x_{2 \ell}^2 - x_{2 \ell -1}x_{2 \ell+1}\}$
%
%\item $\xi_{2 \ell -1} = \{x_{2 \ell -1}^2 - x_{2 \ell -3}x_{2 \ell + 1}, x_{2\ell -1}x_{2 \ell} - x_{2 \ell -2}x_{2 \ell +1}\}$ \\
%
%$\vdots$
%
%\item $\xi_{\ell +1} = \{ x_{\ell+1}^2-x_1x_{2\ell+1},x_{\ell+1}x_{\ell+2}-x_2x_{2\ell+1}, x_{\ell+1}x_{\ell+3}-x_3x_{2\ell +1} 
%, \ldots,x_{\ell+1}x_{2 \ell} - x_{\ell + 1}x_{2\ell +1} \}$
%
%\item $\xi_{\ell} = \{ x_{\ell}^2-x_1x_{2\ell-1},x_{\ell}x_{\ell+1}-x_1x_{2\ell}, x_{\ell}x_{\ell+2}-x_1x_{2\ell +1}, x_{\ell}x_{\ell + 3} - x_2x_{2 \ell +1},  \ldots, x_{\ell}x_{2 \ell} - x_{\ell - 1}x_{2\ell +1} \}$

%\end{itemize}

%\begin{lemma}\label{mingenlemma}
%Let $A = k[x_1,\ldots,x_n]$ be a polynomial ring. For a monomial ideal $J$ of $A$, we write the unique minimal generating set of $J$ as $G(J).$ Let $I = \langle f_1, \ldots, f_k \rangle$ and $I_i =  \langle f_1, \ldots,\hat{f_i},\ldots, f_k \rangle$, $1 \leq i \leq k.$ Suppose that with respect to some monomial order on $A$, $\{LT(f_1), \ldots, LT(f_k)\} \subseteq G(LT(I))$ and $G(LT(I_i)) \subset G(LT(I)) \setminus \{LT(f_i)\}$ for all $1 \leq i \leq k.$ Then $I$ is minimally generated by $\{f_1, \ldots, f_k \}.$
%\end{lemma}
%\begin{proof}
%See \cite[Lemma, 4.2]{pandit-saha-sengupta}.
%\end{proof}

\textbf{Notation:} For each $\ell \in [2,k]$, we define sets $\xi_{\ell}$ of binomials.
\begin{itemize}
\item If $2 \ell > k+1$ then
\begin{center}
$\xi_{\ell} = \{x_{\ell}^2 - x_{2 \ell -k-1} x_{k+1} \} \cup \{x_{\ell}x_{\ell + i}-x_{2 \ell - k - 1 + i} x_{k+1} \mid 1 \leq i \leq k-\ell \}$
\end{center}

\item If $2 \ell \leq k+1$ then
\begin{center}
$\xi_{\ell} = \{x_{\ell}^2 - x_1 x_{2 \ell-1}  \} \cup \{x_{\ell}x_{\ell + i}-x_1x_{2 \ell- 1 + i} \mid 1 \leq i \leq k-2 \ell + 2 \} $\\
$ \cup  \{ x_{\ell}x_{\ell + i} - x_{2 \ell -k -1 + i}x_{k+1} \mid k- 2 \ell +2 < i \leq k- \ell \}$
\end{center}
\end{itemize}

\begin{lemma}\label{mingenlemma}
Let $J$ be the ideal generated by $G := \cup_{\ell = 2}^k \xi_{\ell}$. Then $G$ forms a minimal generating set for $J$.
\end{lemma}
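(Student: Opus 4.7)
My plan is to exploit the standard grading on $k[x_1,\ldots,x_{k+1}]$. Every element of $G$ is homogeneous of standard degree $2$, so $J$ is generated in degree $2$, and $\mathfrak{m} J$ (with $\mathfrak{m}=\langle x_1,\ldots,x_{k+1}\rangle$) lies in degrees $\geq 3$. Therefore, minimality of $G$ as a generating set of $J$ reduces to showing that the elements of $G$ are $k$-linearly independent inside the finite-dimensional vector space $k[x_1,\ldots,x_{k+1}]_2$.

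To verify independence, I would associate to each $b \in G$ a \emph{distinguished} monomial. Inspecting $\xi_\ell$ in either case of the definition, one of the two monomials of $b$ has the shape $x_\ell x_m$ with $2 \leq \ell \leq m \leq k$ --- namely $x_\ell^2$ (when $m=\ell$) or $x_\ell x_{\ell+i}$ (when $m = \ell+i$, $1 \leq i \leq k-\ell$) --- while the other, ``trailing'' monomial always involves the variable $x_1$ or the variable $x_{k+1}$. I would declare $x_\ell x_m$ to be the distinguished monomial of $b$.

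The argument then rests on two checks. First, the map $b \mapsto (\text{distinguished monomial of }b)$ is a bijection from $G$ onto $\{x_\ell x_m : 2 \leq \ell \leq m \leq k\}$; surjectivity is immediate from the construction of $\xi_\ell$, and injectivity follows since given a pair $\{\ell,m\}$ with entries in $[2,k]$, the ordering $\ell \leq m$ recovers $\ell$ and hence the unique $\xi_\ell$ containing it. Second, no trailing monomial of any element of $G$ lies in the distinguished set, because each trailing monomial has a factor of $x_1$ or $x_{k+1}$, whose indices fall outside $[2,k]$. With these two facts, reading the coefficient of the distinguished monomial of $b_0$ in any $k$-linear relation $\sum_{b \in G} c_b\, b = 0$ yields $c_{b_0}=0$, so $G$ is linearly independent in degree $2$ and hence a minimal generating set of $J$.

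The only bookkeeping obstacle is the case split $2\ell \leq k+1$ versus $2\ell > k+1$ in the definition of $\xi_\ell$. This split alters only the shape of the trailing monomials (distinguishing $x_1 x_j$ from $x_j x_{k+1}$), not the distinguished ones, so a single uniform argument covers both cases.
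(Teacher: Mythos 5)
Your proof is correct and is essentially the paper's argument: the paper sets $x_1=x_{k+1}=0$ via a retraction $\phi:k[x_1,\ldots,x_{k+1}]\to k[x_2,\ldots,x_k]$ and observes that each generator survives as its unique ``interior'' monomial $x_\ell x_m$ with $\ell,m\in[2,k]$ while the other monomial dies, which is exactly your distinguished/trailing monomial dichotomy. Your explicit reduction to linear independence in standard degree $2$ via graded Nakayama is a slightly cleaner packaging of the same idea, but the underlying mechanism is identical.
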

\begin{proof}
Let $\phi : k[x_1,\ldots,x_{k+1}] \longrightarrow k[x_2,\ldots,x_k]$ be the map such that $\phi(x_i) = x_i$ for $i=2,\ldots,k$ and $\phi(x_1) = \phi(x_{k+1}) = 0$. Consider an arbitrary element $f \in G$, then $f = x_mx_n - x_{m'}x_{n'}$, where $m,n \in [2,k]$ and $m',n' \in [1,k+1]$. We also have that either $m' = 1$ or $n' = k+1$ for any $f \in G$. Suppose that 
\[
f = \sum_{g \in G, g \neq f} f'(x_1,\ldots,x_{k+1}) g.
\]
Now operating $\phi$ on both sides, we have
\[
x_mx_n = \sum f'(0,x_2,\ldots,x_k,0)x_{\ell}x_{\ell'} \quad \text{for some} \quad \ell, \ell' \in [2,k].
\]
This is not possible, since $g \neq f$. Hence $G$ forms a minimal generating set for $J$.
\end{proof}

\begin{theorem}\label{generatingset}
The set $G := \cup_{\ell = 2}^k \xi_{\ell}$ is a minimal generating set for the defining ideal $I_{S_{a,d,k}}$ of $k[S_{a,d,k}].$
\end{theorem}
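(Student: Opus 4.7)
The plan is to set $J = \langle G \rangle$ and apply Theorem \ref{k-dim J suset I_S} with the extremal rays $a$ and $a+kd$ (which under the grading $\deg x_i = a + (i-1)d$ correspond to the variables $x_1$ and $x_{k+1}$, so $r=2$ in the theorem). By Theorem \ref{CohenMacaulayness} the ring $k[S_{a,d,k}]$ is Cohen-Macaulay, and by Lemma \ref{Aperyset} we have $|\mathrm{Ap}(S_{a,d,k}, E)| = k$. It therefore suffices to verify the two conditions (i) $J \subseteq I_{S_{a,d,k}}$ and (ii) $\dim_k k[x_1,\ldots,x_{k+1}]/(J + \langle x_1, x_{k+1}\rangle) = k$; minimality of $G$ then follows at once from Lemma \ref{mingenlemma}.

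Condition (i) is a short arithmetic check: each binomial in $G$ has the shape $x_m x_n - x_{m'} x_{n'}$, and one verifies directly in each of the three clauses defining $\xi_\ell$ that both terms carry the same $S$-degree $2a + cd$ for a common integer $c$. For instance, $x_\ell^2$ and $x_{2\ell-k-1} x_{k+1}$ both have $S$-degree $2a + (2\ell-2)d$, while $x_\ell x_{\ell + i}$ and $x_1 x_{2\ell - 1 + i}$ both have $S$-degree $2a + (2\ell - 2 + i)d$, and an analogous one-line check handles the remaining shape.

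For condition (ii), I would use the observation made inside the proof of Lemma \ref{mingenlemma}: in every generator $f \in G$, the ``subtracted'' monomial contains either $x_1$ or $x_{k+1}$. Under the substitution $x_1 = x_{k+1} = 0$, every generator of $G$ therefore collapses to a single monomial $x_m x_n$ with $2 \leq m \leq n \leq k$. The main obstacle is to verify that the resulting collection is \emph{exactly} the full set of such monomials, i.e., that for each $\ell \in [2,k]$ and each $i \in [0, k-\ell]$ the monomial $x_\ell x_{\ell+i}$ occurs as the non-$\{x_1,x_{k+1}\}$-term of some element of $\xi_\ell$. This reduces to a short case check on the definition of $\xi_\ell$: when $2\ell > k+1$ the case $i = 0$ together with the range $[1, k-\ell]$ already covers $[0, k-\ell]$, while when $2\ell \leq k+1$ the union $\{0\} \cup [1,\, k-2\ell+2] \cup (k-2\ell+2,\, k-\ell]$ coming from the three clauses again covers $[0, k-\ell]$. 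Consequently the image of $J$ in $k[x_2, \ldots, x_k]$ equals the ideal $(x_2, \ldots, x_k)^2$, so the quotient has $k$-basis $\{1, x_2, \ldots, x_k\}$ and $k$-vector space dimension $k$, as required. Theorem \ref{k-dim J suset I_S} now yields $J = I_{S_{a,d,k}}$, and Lemma \ref{mingenlemma} supplies minimality, completing the proof.
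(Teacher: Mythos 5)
Your proposal is correct and follows essentially the same route as the paper: both reduce to Theorem \ref{k-dim J suset I_S} by checking that $G$ collapses modulo $\langle x_1,x_{k+1}\rangle$ to the full set of quadratic monomials in $x_2,\ldots,x_k$, so that the quotient has $k$-dimension $k = |\mathrm{Ap}(S_{a,d,k},E)|$, and both invoke Lemma \ref{mingenlemma} for minimality. Your explicit $S$-degree verification of $J \subseteq I_{S_{a,d,k}}$ and the index-range bookkeeping are details the paper leaves implicit, but the argument is the same.
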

\begin{proof}
Let $J$ be the ideal generated by $G$ in $k[x_1, \ldots, x_{k+1}].$ We have 
\begin{align*}
J + \langle x_1, x_{k+1}\rangle &= \langle \cup_{\ell = 2}^k \xi_{\ell} \rangle + \langle x_1, x_{k+1}\rangle \\
&=\langle \cup_{j=3}^k \{x_\ell x_j \mid 2 \leq \ell \leq j-1 \} \cup \{ x_{\ell}^2 \mid 2 \leq \ell \leq k \} \rangle + \langle x_1, x_{k+1}\rangle
\end{align*}

Define
 
\begin{center}
$J' = \langle \cup_{j=3}^k \{x_\ell x_j \mid 2 \leq \ell \leq j-1 \} \cup \{ x_{\ell}^2 \mid 2 \leq \ell \leq k \} \rangle .$
\end{center}
Therefore, we have
\begin{align*}
\frac{k[x_1, \ldots , x_{k+1}]}{J + \langle x_1, x_{k+1}\rangle} = \frac{k[x_2, \ldots, x_k]}{J'}
\end{align*}
  It is clear that the $k$-vector space $k[x_2, \ldots, x_k]/ J'$ is generated by the set $\mathfrak{B} = \{1, x_2, x_3, \ldots, x_k\}.$ Note that the cardinality of $\mathfrak{B}$ is $k.$ Also from Lemma \ref{Aperyset}, we see that the cardinality of $\mathrm{Ap}(S_{a,d,k},$\\ $ \{a, a+ kd\})$ is $k.$ Therefore by Theorem \ref{k-dim J suset I_S}, we conclude that $J = I_{S_{a,d,k}}.$ The minimality of the generating set follows from the Lemma \ref{mingenlemma}.
\end{proof}

\begin{corollary}
Let $\mu(I_{S_{a,d,k}})$ denote the cardinality of a minimal generating set of $I_{S_{a,d,k}}$. Then $\mu(I_{S_{a,d,k}}) = \frac{k(k-1)}{2}$.
\end{corollary}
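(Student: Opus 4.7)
The plan is to observe that this corollary is a direct counting consequence of Theorem \ref{generatingset}, which asserts that $G = \bigcup_{\ell=2}^{k}\xi_{\ell}$ is a minimal generating set of $I_{S_{a,d,k}}$. Thus $\mu(I_{S_{a,d,k}}) = |G|$, and I would first argue that the union is disjoint: every binomial in $\xi_{\ell}$ has its leading monomial of the form $x_{\ell}x_{\ell+i}$ with $i \geq 0$, so the smallest index appearing in the leading monomial is exactly $\ell$, which distinguishes the $\xi_{\ell}$ for different values of $\ell$.

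The second step is to count $|\xi_{\ell}|$ for each $\ell \in [2,k]$ by splitting into the two cases given in the definition preceding Lemma \ref{mingenlemma}. When $2\ell > k+1$, the set $\xi_{\ell}$ consists of one relation $x_{\ell}^2 - x_{2\ell-k-1}x_{k+1}$ together with one relation for each $i \in [1,k-\ell]$, giving $|\xi_{\ell}| = 1 + (k-\ell) = k - \ell + 1$. When $2\ell \leq k+1$, there is one relation $x_{\ell}^2 - x_1 x_{2\ell-1}$, then $k-2\ell+2$ relations indexed by $1 \leq i \leq k-2\ell+2$, and finally $(k-\ell)-(k-2\ell+2) = \ell - 2$ relations indexed by $k-2\ell+2 < i \leq k-\ell$; summing gives $1 + (k-2\ell+2) + (\ell-2) = k-\ell+1$ again.

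Finally, I sum over $\ell$:
\[
\mu(I_{S_{a,d,k}}) = \sum_{\ell=2}^{k} (k-\ell+1) = \sum_{j=1}^{k-1} j = \frac{k(k-1)}{2}.
\]
There is no genuine obstacle here: the uniformity of $|\xi_{\ell}| = k-\ell+1$ across both cases is the only point requiring care, and the disjointness of the $\xi_{\ell}$ is immediate from the index structure of their leading monomials, so the corollary reduces to a short arithmetic check.
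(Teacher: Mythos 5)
Your proposal is correct and follows essentially the same route as the paper: both establish $|\xi_{\ell}| = k-\ell+1$ for each $\ell \in [2,k]$ and sum to get $\frac{k(k-1)}{2}$. Your explicit verification of the two cases in the definition of $\xi_{\ell}$ and of the disjointness of the union is slightly more detailed than the paper's one-line assertion, but it is the same argument.
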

\begin{proof}
Since $G := \cup_{\ell = 2}^k \xi_{\ell}$ in \ref{generatingset} is a minimal generating set of  $I_{S_{a,d,k}}$. Observe that for all $\ell \in [0,k-2]$, we have $|\xi_{k-\ell}| = \ell + 1.$ Hence,
\[
\mu(I_{S_{a,d,k}}) = \sum_{\ell = 0}^{k-2} |\xi_{k-\ell}| = \sum_{\ell = 0}^{k-2} \ell + 1 = \dfrac{k(k-1)}{2}.
\]
\end{proof}

Fix a monomial order. Let $\mathrm{LT}(f)$ denote the leading term of the polynomial $f$. Let $I$ be an ideal of $k[x_1,\ldots,x_n]$. A finite subset $G = \{g_1,\ldots,g_t\} \subset I$ is said to be a 
Gr\"{o}bner basis of $I$ if 
\[
\langle \mathrm{LT}(g_1), \ldots, \mathrm{LT}(g_t)\rangle = \langle \mathrm{LT}(I)\rangle.
\]
Following is the criterion of Buchberger for when a basis of an ideal is a Gr\"{o}bner basis.
\begin{theorem}[{\bf (Buchberger's criterion)}, {\cite[2.6, Theorem 6]{coxlittleoshea}}]  \label{Buchberger's criterion}
 Let $I$ be an ideal of $k[x_1,\ldots,x_n]$. Then a basis $G = \{g_1,\ldots,g_t\}$ for $I$ is a Gr\"{o}bner basis for $I$ if and only if for all pairs $i \neq j,$ the remainder on division of $S(g_i,g_j)$ by $G$ is zero. Where
\[
 S(g_i,g_j) = \frac{\bf x^{\gamma}}{\mathrm{LT}(g_i)} g_i - \frac{\bf x^{\gamma}}{\mathrm{LT}(g_j)} g_j
\]
with $\gamma = (\gamma_1, \ldots, \gamma_n)$ such that $\gamma_i = \mathrm{max}(\alpha_i,\beta_i)$ and $\mathrm{deg}(g_i)= (\alpha_1,\ldots, \alpha_n)$, $\mathrm{deg}(g_j) = (\beta_1,\ldots,\beta_n)$.
\end{theorem}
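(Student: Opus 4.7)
The plan is to prove both directions separately, with the reverse direction being the substantive one.

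For the forward direction, I would argue as follows. Suppose $G$ is a Gr\"obner basis for $I$. For each pair $i \neq j$, the $S$-polynomial $S(g_i, g_j)$ lies in $I$ by construction (it is a polynomial combination of $g_i$ and $g_j$ designed to cancel their leading terms). A standard property of Gr\"obner bases is that $f \in I$ if and only if the remainder of $f$ on division by $G$ is zero; applying this to $f = S(g_i, g_j)$ gives the desired conclusion.

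For the reverse direction, assume every $S(g_i, g_j)$ reduces to zero on division by $G$. Let $f \in I$ be arbitrary; the goal is to show $\mathrm{LT}(f) \in \langle \mathrm{LT}(g_1), \ldots, \mathrm{LT}(g_t) \rangle$. Among all expressions $f = \sum_{i=1}^t h_i g_i$, pick one that minimizes $\delta := \max_i \mathrm{multideg}(h_i g_i)$ with respect to the chosen monomial order. Clearly $\delta \geq \mathrm{multideg}(f)$. If $\delta = \mathrm{multideg}(f)$, then $\mathrm{LT}(f)$ equals a sum of leading terms $\mathrm{LT}(h_i) \mathrm{LT}(g_i)$ that do not entirely cancel, so in particular $\mathrm{LT}(f)$ is divisible by some $\mathrm{LT}(g_i)$ and we are done.

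Now suppose for contradiction that $\delta > \mathrm{multideg}(f)$. Then the degree-$\delta$ parts of the terms $h_i g_i$ must cancel. Write each $h_i = c_i \mathbf{x}^{\alpha(i)} + (\text{lower order})$ for the index set $T = \{i : \mathrm{multideg}(h_i g_i) = \delta\}$, so that $\sum_{i \in T} c_i \mathbf{x}^{\alpha(i)} \mathrm{LT}(g_i) = 0$. The crucial tool here is the \emph{cancellation lemma}: such a sum can be expressed as a $k$-linear combination $\sum_{i,j \in T} d_{ij}\, \mathbf{x}^{\gamma(i,j)} S(g_i, g_j)$, with every term $\mathbf{x}^{\gamma(i,j)} S(g_i, g_j)$ having multidegree strictly less than $\delta$. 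Using the hypothesis that each $S(g_i, g_j)$ has zero remainder on division by $G$, we can write $S(g_i, g_j) = \sum_k a_{ijk} g_k$ with $\max_k \mathrm{multideg}(a_{ijk} g_k) = \mathrm{multideg}(S(g_i, g_j))$. Substituting back yields a new representation $f = \sum_i h_i' g_i$ in which the maximum multidegree is strictly less than $\delta$, contradicting minimality. Hence $\delta = \mathrm{multideg}(f)$, completing the proof.

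The main obstacle is justifying the cancellation lemma rigorously: one has to show that any $k$-linear dependence among monomials of the same multidegree built from the $\mathrm{LT}(g_i)$'s can be rewritten as a telescoping combination of $S$-polynomials (for instance by a clever pairing argument, summing consecutive differences $p_i - p_{i+1}$ where each $p_i$ is a normalized monomial multiple of one $g_i$ with leading coefficient $1$ at degree $\delta$). Once that combinatorial identity is in hand, the descent argument and final conclusion are straightforward.
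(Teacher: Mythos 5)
Your argument is correct and is precisely the standard proof of Buchberger's criterion from the cited source (Cox--Little--O'Shea, \S 2.6): the forward direction via the ideal-membership property of Gr\"obner bases, and the reverse direction via the minimal-multidegree representation, the cancellation lemma for $S$-polynomials, and the descent step using the division algorithm's degree bound. The paper itself offers no proof of this theorem --- it is stated only with a citation --- so there is nothing to contrast; your sketch, modulo the cancellation lemma whose gap you correctly identify and outline how to fill, matches the referenced proof.
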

In the following Theorem we prove that $G$ defined in Theorem \ref{generatingset} is actually a Gr\"{o}bner basis for $I_{S_{a,d,k}}$ with respect to the reverse lexicographic order.
\begin{theorem}\label{Groebnerbasis}
With respect to the reverse lexicographic order, $G := \cup_{\ell = 2}^k \xi_{\ell}$ forms a Gr\"{o}bner basis for $I_{S_{a,d,k}}$.
\end{theorem}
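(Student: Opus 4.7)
The plan is to verify Buchberger's criterion (Theorem \ref{Buchberger's criterion}) by a direct case analysis.

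First I would identify the leading term of each generator. Under the reverse lexicographic order induced by $x_1 > x_2 > \cdots > x_{k+1}$, each binomial $g = x_\ell x_m - x_p x_q \in G$ has $\mathrm{LT}(g) = x_\ell x_m$. Indeed, the right-hand monomial $x_p x_q$ always carries a factor of $x_1$ or $x_{k+1}$, and comparing exponent vectors shows the rightmost nonzero entry of $\deg(x_\ell x_m) - \deg(x_p x_q)$ is negative (located at position $k+1$ when $q = k+1$, or at position $\ell + m - 1 > m$ when $p = 1$). It follows that
\[
\{\mathrm{LT}(g) : g \in G\} \;=\; \{x_\ell x_m : 2 \le \ell \le m \le k\},
\]
and these $\tfrac{k(k-1)}{2}$ leading monomials are pairwise distinct.

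Next, Buchberger's first criterion disposes of every $S$-pair whose leading terms are variable-disjoint, since in that case $S(g_i,g_j)$ reduces to zero modulo $G$ automatically. What remains are pairs whose leading terms share at least one variable; by the symmetry of the two indices, such a pair may be written as $g = x_\ell x_a - u$ and $g' = x_\ell x_b - v$ with common index $\ell$, so that
\[
S(g, g') \;=\; x_b u - x_a v.
\]

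Finally, I would show that $S(g,g')$ reduces to zero in each of the three subcases determined by whether $u,v$ are of the form $x_1 x_?$ or $x_? x_{k+1}$. The uniform strategy is: the cancellation in $x_b u - x_a v$ exposes a common factor of $x_1$, of $x_{k+1}$, or of some $x_s$ with $s \in \{2,\ldots,k\}$, after which the remaining degree-$2$ polynomial is either $\pm g''$ for some $g'' \in G$, or else reduces in one further step via the generator in $G$ whose leading term divides its leading monomial. The binomial identity $i+j = p+q$ characterizing membership in $I_{S_{a,d,k}}$ guarantees that the required reducer is always present in $G$.

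The main obstacle is the bookkeeping in this last step: the choice of reducing generator depends on whether each parent generator lies in the $2\ell \le k+1$ or $2\ell > k+1$ regime of the definition of $\xi_\ell$, and on the relative positions of the indices $a, b, \ell+a-1, \ell+b-1, \ell+a-k-1, \ell+b-k-1$ inside $[1,k+1]$. Although the number of subcases is finite and each individual reduction is short, care is needed so that no overlapping $S$-pair escapes the analysis.
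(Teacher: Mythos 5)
Your setup is correct and coincides with the paper's: under reverse lexicographic order the leading term of every generator is the ``middle'' monomial $x_\ell x_m$ with $2\le \ell\le m\le k$ (your justification via the rightmost nonzero entry of the exponent difference is right), the coprime pairs are discarded by Buchberger's first criterion, and the overlapping pairs give $S(g,g') = \pm(x_b u - x_a v)$. This is exactly the skeleton of the paper's proof. However, the actual content of the theorem is the verification that each overlapping $S$-polynomial reduces to zero, and this is where your proposal has a genuine gap: you describe a ``uniform strategy'' (factor out a common variable, then the residual quadric is a generator or reduces in one more step) but do not carry it out, and the strategy as stated fails in the mixed case. When $u = x_1 x_p$ and $v = x_q x_{k+1}$ (one parent in the $2\ell\le k+1$ regime with $\ell+m\le k+2$, the other with $\ell+m\ge k+2$), the binomial $x_b x_1 x_p - x_a x_q x_{k+1}$ generically has \emph{no} common variable factor. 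The paper handles these cases (its Subcases 3.2, 7.1, 7.2, and Cases 8, 9, 11, 12) not by factoring but by exhibiting a standard representation
\[
S(f,g) \;=\; -x_1\bigl(\cdots\bigr) \;+\; x_{k+1}\bigl(\cdots\bigr),
\]
where each parenthesized binomial must be shown to lie in $G$; this requires checking that indices such as $3\ell+i+i'-(k+2)$ or $2\ell'+\ell+i'-(k+2)$ genuinely land in $[1,k+1]$, using the defining inequalities of the regimes. Those range verifications are the nontrivial part of the argument and are precisely what you have deferred to ``bookkeeping.''

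A remark that would actually close the gap more cheaply than the paper's fifteen cases: every $S$-polynomial of two binomials in $I_{S_{a,d,k}}$ is again a binomial in $I_{S_{a,d,k}}$, reduction by $G$ preserves this, and a cubic monomial is in normal form with respect to $\mathrm{LT}(G)$ iff it has at most one factor from $\{x_2,\dots,x_k\}$; one then checks directly from $i+j=p+q$ that two distinct normal-form monomials cannot have the same $S$-degree, so every binomial of $I_{S_{a,d,k}}$ reduces to zero. You gesture at the identity $i+j=p+q$ but do not assemble it into such an argument, so as written the proof is incomplete.
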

\begin{proof}
First, we divide $G$ into following parts:
\begin{itemize}

\item $B_1 = \{ x_{\ell}^2 - x_1 x_{2 \ell -1} \mid \ell \in [2,k], 2\ell \leq k+1 \}$

\item $B_2 = \{ x_{\ell}x_{\ell + i} - x_1 x_{2 \ell -1 + i} \mid \ell \in [2,k], 2\ell \leq k+1, 1 \leq i \leq k -2 \ell + 2\}$

\item $B_3 = \{ x_{\ell}x_{\ell + i} -  x_{2 \ell -k -1 + i} x_{k+1} \mid \ell \in [2,k], 2\ell \leq k+1, k-2 \ell +2 < i \leq k - \ell \}$

\item $B_4 = \{ x_{\ell}^2 -  x_{2 \ell-k -1} x_{k+1} \mid \ell \in [2,k], 2\ell > k+1 \}$

\item $B_5 = \{ x_{\ell}x_{\ell + i} -  x_{2 \ell -k -1 + i} x_{k+1} \mid \ell \in [2,k], 2\ell > k+1, 1 \leq i \leq k - \ell \}$

\end{itemize}

\noindent Now, we proceed by Buchberger's criterion \ref{Buchberger's criterion}. The prrof is divided 
into various cases and subcases.\\

\noindent\textbf{Case 1.} If $f,g \in B_1$ and $f \neq g$ then $\mathrm{LT}(f) = x_{\ell}^2$ and $\mathrm{LT}(g) = x_{\ell'}^2$ for some $\ell, \ell' \in [2,k]$ and $\ell \neq \ell'$. We have $\mathrm{gcd}(\mathrm{LT}(f),\mathrm{LT}(g)) = 1,$ and hence $S(f,g) \rightarrow 0.$\\

\noindent\textbf{Case 2.} If $f \in B_1$ and $g \in B_2$ then $f = x_{\ell}^2 - x_1 x_{2 \ell -1}$ for some $\ell \in [2,k]$, $2 \ell \leq k+1$, and for a some  $  i \in  [1,k -2 \ell' + 2]$, $g = x_{\ell'}x_{\ell' + i} - x_1 x_{2 \ell' -1 + i}$ for some $\ell' \in [2,k], 2\ell' \leq k+1$.\\
\textbf{Subcase 2.1.} Let $\ell < \ell'$. We have $\mathrm{LT}(f) = x_{\ell}^2$ and $\mathrm{LT}(g) = x_{\ell'}x_{\ell'+i}$ for some $\ell, \ell' \in [2,k]$. Since $\ell < \ell'$, we have $\mathrm{gcd}(\mathrm{LT}(f),\mathrm{LT}(g)) = 1,$ and hence $S(f,g) \rightarrow 0.$\\
\textbf{Subcase 2.2.} Let $\ell = \ell'$. We have 
\begin{align*}
S(f,g) = x_{\ell+i}(x_{\ell}^2 - x_1 x_{2 \ell -1}) - x_{\ell}(x_{\ell}x_{\ell + i} - x_1 x_{2 \ell -1 + i}) = -x_1(x_{2\ell-1}x_{\ell+i} - x_{\ell}x_{2\ell -1 + i})
\end{align*}
If $3 \ell+ i -1 \leq k+2$, then 
\[
S(f,g) = -x_1(x_{2\ell-1}x_{\ell+i}-x_1 x_{3\ell + i -2})+ x_1(x_{\ell}x_{2\ell -1 + i} - x_1 x_{3\ell + i -2}).
\]
If $3 \ell+ i -1 > k+2$, then 
\[
S(f,g) = -x_1(x_{2\ell-1}x_{\ell+i}-x_{3 \ell + i -(k+2)}x_{k+1})+ x_1(x_{\ell}x_{2\ell -1 + i} -x_{3 \ell + i -(k+2)}x_{k+1}).
\]
Hence, we have $S(f,g) \rightarrow 0.$\\
\textbf{Subcase 2.3.} Let $\ell > \ell'$. If $\mathrm{gcd}(\mathrm{LT}(f),\mathrm{LT}(g)) = 1$ then $S(f,g) \rightarrow 0.$ Assume that $\mathrm{gcd}(\mathrm{LT}(f),\mathrm{LT}(g)) \neq 1$ then $\ell'+ i =\ell$. Therefore,
\[
S(f,g) = x_{\ell'}(x_{\ell}^2 - x_1 x_{2 \ell -1}) - x_{\ell}(x_{\ell'}x_{\ell' + i} - x_1 x_{2 \ell' -1 + i}) = -x_1(x_{\ell'}x_{2 \ell - 1} - x_{\ell}x_{2 \ell'+i-1}).
\]
If $2 \ell + \ell' - 1 \leq k+2$, then
\[
S(f,g) = -x_1(x_{\ell'}x_{2 \ell - 1}- x_1x_{2 \ell + \ell'-2}) + x_1(x_{\ell}x_{2 \ell'+i-1}-  x_1x_{2 \ell + \ell'-2}).
\]
If $2 \ell + \ell' - 1 > k+2$, then
\[
S(f,g) = -x_1(x_{\ell'}x_{2 \ell - 1}- x_{2\ell + \ell' - (k+2)}x_{k+1}) + x_1(x_{\ell}x_{2 \ell'+i-1}- x_{2\ell + \ell' - (k+2)}x_{k+1} ).
\]
Hence, we have $S(f,g) \rightarrow 0.$\\

\noindent\textbf{Case 3.} If $f \in B_1$ and $g \in B_3$ then  $f = x_{\ell}^2 - x_1 x_{2 \ell -1}$ for some $\ell \in [2,k]$, $2 \ell \leq k+1$, and for some  $  i \in  [k -2 \ell' + 3,k-\ell']$, $g = x_{\ell'}x_{\ell' + i} -  x_{2 \ell'- k-1 + i}x_{k+1}$ for some $\ell' \in [2,k], 2\ell' \leq k+1$.\\
\textbf{Subcase 3.1.} Let $\ell < \ell'$. We have $\mathrm{LT}(f) = x_{\ell}^2$ and $\mathrm{LT}(g) = x_{\ell'}x_{\ell'+i}$ for some $\ell, \ell' \in [2,k]$. Since $\ell < \ell'$, we have $\mathrm{gcd}(\mathrm{LT}(f),\mathrm{LT}(g)) = 1,$ and hence $S(f,g) \rightarrow 0.$\\
\textbf{Subcase 3.2.} Let $\ell = \ell'$. We have 
\[
S(f,g) = x_{\ell+i}(x_{\ell}^2 - x_1 x_{2 \ell -1}) - x_{\ell}(x_{\ell}x_{\ell + i} - x_{2 \ell -k -1 + i}x_{k+1}) = -x_1x_{2\ell-1}x_{\ell+i} + x_{2\ell -k -1 + i}x_{\ell}x_{k+1}
\]
Since $i > k-2\ell+2$, we have $3\ell + i -1 > \ell+k+1$. Also, $\ell \geq 2$, we have $3\ell + i -1 > k+2$. Again, since $i \leq k-\ell$, we have $3\ell-k-1+i \leq 2\ell -1$. Also, $2 \ell \leq k+2$, we have $3\ell-k-1+i \leq k+2$. Therefore, we have
\[
S(f,g) = -x_1(x_{2\ell-1}x_{\ell+i} - x_{3\ell +i -(k+2)}x_{k+1})+x_{k+1}(x_{2\ell - k -1 +i}x_{\ell} - x_1x_{3\ell +i -(k+2)})
\]
Hence, we have $S(f,g) \rightarrow 0.$\\
\textbf{Subcase 3.3.} Let $\ell > \ell'$. Suppose $\mathrm{gcd}(\mathrm{LT}(f),\mathrm{LT}(g)) \neq 1,$ then we have $\ell'+i = \ell.$ Since $i > k-2\ell+2$, we have $\ell  > k - \ell' + 2$. Now, since $2 \ell' \leq k+1$ and $\ell > \ell'$, we have $k-\ell' \geq \ell$. This impies $\ell \geq \ell+ 2$, which is a contradiction. Therefore, $\mathrm{gcd}(\mathrm{LT}(f),\mathrm{LT}(g)) = 1$ and hence $S(f,g) \rightarrow 0.$\\

\noindent\textbf{Case 4.} If $f \in B_1$ and $g \in B_4$ then $\mathrm{LT}(f) = x_{\ell}^2$ such that $2 \ell \leq k+1$ and $\mathrm{LT}(g) = x_{\ell'}^2$ such that $2 \ell' > k+1$. Therefore, we have $\mathrm{gcd}(\mathrm{LT}(f),\mathrm{LT}(g)) = 1,$ and hence $S(f,g) \rightarrow 0.$\\

\noindent\textbf{Case 5.} If $f \in B_1$ and $g \in B_5$ then $\mathrm{LT}(f) = x_{\ell}^2$ such that $2 \ell \leq k+1$ and $\mathrm{LT}(g) = x_{\ell'}x_{\ell'+i}$ such that $2 \ell' > k+1$. Therefore, we have $\mathrm{gcd}(\mathrm{LT}(f),\mathrm{LT}(g)) = 1,$ and hence $S(f,g) \rightarrow 0.$\\

\noindent\textbf{Case 6.} If $f,g \in B_2$ then $\mathrm{LT}(f) = x_{\ell}x_{\ell+i}$ such that $2 \ell \leq k+1, i \in [1,k-2\ell+2]$ and $\mathrm{LT}(g) = x_{\ell'}x_{\ell'+i'}$ such that $2 \ell' \leq k+1,i' \in [1,k-2\ell'+2]$.\\
\textbf{Subcase 6.1.} Let $\ell = \ell'$ and $i \neq i'$. We have
\[
S(f,g) = x_{\ell+i'}(x_{\ell}x_{\ell + i} - x_1 x_{2 \ell -1 + i})-x_{\ell + i}(x_{\ell}x_{\ell + i'} - x_1 x_{2 \ell -1 + i'}) =-x_1(x_{\ell+i'}x_{2\ell-1+i} - x_{\ell+i}x_{2\ell-1+i'})
\] 
If $3\ell + i + i' -1 \leq k+2$, then 
\[
S(f,g) = -x_1(x_{\ell+i'}x_{2\ell-1+i} - x_1x_{3 \ell + i + i' - 2}) + x_1(x_{\ell+i}x_{2\ell-1+i'} - x_1x_{3 \ell + i + i' - 2}).
\]
If $3\ell + i + i' -1 > k+2$, then 
\[
S(f,g) = -x_1(x_{\ell+i'}x_{2\ell-1+i} - x_{3 \ell + i + i' -(k +2)}x_{k+1}) + x_1(x_{\ell+i}x_{2\ell-1+i'} -x_{3 \ell + i + i' -(k +2)}x_{k+1}).
\]
Hence, we have $S(f,g) \rightarrow 0.$\\
\textbf{Subcase 6.2.} Let $\ell \neq \ell'$ and $\mathrm{gcd}(\mathrm{LT}(f),\mathrm{LT}(g)) \neq 1$. WLOG assume that $\ell < \ell'$. We have either $\ell + i = \ell'$ or $\ell + i = \ell' + i'$. Suppose $\ell + i = \ell'$, then 
\[
S(f,g) = x_{\ell'+i'}(x_{\ell}x_{\ell'} - x_1x_{\ell+\ell'-1})-x_{\ell}(x_{\ell'}x_{\ell' + i'} - x_1 x_{2 \ell' -1 + i'}) = -x_1(x_{\ell'+i'}x_{\ell+\ell'-1} - x_{\ell}x_{2 \ell'-1+i'}).
\]
If $2\ell' + \ell + i' -1 \leq k+2$, then 
\[
S(f,g) = -x_1(x_{\ell'+i'}x_{\ell+\ell'-1} - x_1x_{2\ell' + \ell + i' -2}) + x_1(x_{\ell}x_{2 \ell'-1+i'} -  x_1x_{2\ell' + \ell + i' -2}).
\]
If $2\ell' + \ell + i' -1 > k+2$, then 
\[
S(f,g) = -x_1(x_{\ell'+i'}x_{\ell+\ell'-1} - x_{2\ell' + \ell + i' -(k+2)}x_{k+1}) + x_1(x_{\ell}x_{2 \ell'-1+i'} - x_{2\ell' + \ell + i' -(k+2)}x_{k+1}).
\]
Now, suppose $\ell + i = \ell' + i'$, then 
\[
S(f,g) = x_{\ell'}(x_{\ell}x_{\ell+i} - x_1x_{2\ell-1 + i})-x_{\ell}(x_{\ell'}x_{\ell' + i'} - x_1 x_{2 \ell' -1 + i'}) = -x_1(x_{\ell'}x_{2\ell-1 + i} - x_{\ell}x_{2 \ell'-1+i'}).
\]
If $2\ell + \ell' -1 + i = 2 \ell' + \ell -1 + i' \leq k+2$, then
\[
S(f,g) = -x_1(x_{\ell'}x_{2\ell-1 + i} - x_1x_{2\ell + \ell' + i -2}) + x_1(x_{\ell}x_{2 \ell'-1+i'} -  x_1x_{2\ell' + \ell + i' -2}).
\]
If $2\ell + \ell' -1 + i = 2 \ell' + \ell -1 + i' > k+2$, then
\[
S(f,g) = -x_1(x_{\ell'}x_{2\ell-1 + i} - x_{2\ell + \ell' + i -(k+2)}x_{k+1}) + x_1(x_{\ell}x_{2 \ell'-1+i'} - x_{2\ell + \ell' + i -(k+2)}x_{k+1}).
\]
Hence, we have $S(f,g) \rightarrow 0.$\\

\noindent\textbf{Case 7.} If $f \in B_2$ and $g \in B_3$ then $\mathrm{LT}(f) = x_{\ell}x_{\ell+i}$ such that $2 \ell \leq k+1, i \in [1,k-2\ell+2]$ and $\mathrm{LT}(g) = x_{\ell'}x_{\ell'+i'}$ such that $2 \ell' \leq k+1,i' \in [k-2\ell' + 3,k-\ell']$.\\
\textbf{Subcase 7.1.} Let $\ell = \ell'$ and $\mathrm{gcd}(\mathrm{LT}(f),\mathrm{LT}(g)) \neq 1$. We have
\begin{align*}
S(f,g) &= x_{\ell + i'}(x_{\ell}x_{\ell + i} - x_1 x_{2 \ell -1 + i})-x_{\ell + i}( x_{\ell}x_{\ell + i'} -  x_{2 \ell- k-1 + i'}x_{k+1})\\
& = -x_1x_{\ell+i'}x_{2 \ell -1 + i} + x_{\ell + i}x_{2 \ell - k - 1 + i'}x_{k+1}.
\end{align*}
Since $3 \ell + i + i' - 1 \geq k + \ell + 4 > k+2,$ we have 
\[
S(f,g) = -x_1(x_{\ell+i'}x_{2 \ell -1 + i} - x_{3\ell+ i + i'-(k+2)}x_{k+1}) + x_{k+1}( x_{\ell + i}x_{2 \ell - k - 1 + i'}- x_{3\ell+ i + i'-(k+2)}x_{k+1}).
\]
Hence, we have $S(f,g) \rightarrow 0.$\\
\textbf{Subcase 7.2.} Let $\ell < \ell'$ and $\mathrm{gcd}(\mathrm{LT}(f),\mathrm{LT}(g)) \neq 1$. We have either $\ell + i = \ell'$ or $\ell + i = \ell' + i'$. Suppose $\ell + i = \ell'$, then
\begin{align*}
S(f,g) &=  x_{\ell'+i'}(x_{\ell}x_{\ell'} - x_1x_{\ell+\ell'-1})-x_{\ell}(x_{\ell'}x_{\ell' + i'}-x_{2\ell'-k-1+i'}x_{k+1})\\
&= -x_1x_{\ell'+i'}x_{\ell + \ell' -1} + x_{\ell}x_{2 \ell' - k -1 + i'}x_{k+1}. 
\end{align*}
Since $2 \ell' + \ell+ i' - 1 \geq k +\ell' - \ell + 2 > k+2,$ we have 
\[
S(f,g) = -x_1(x_{\ell'+i'}x_{\ell + \ell' -1} - x_{2\ell'+\ell+i'-(k+2)}x_{k+1})+ x_{k+1}( x_{\ell}x_{2 \ell' - k -1 + i'}- x_{2\ell'+\ell+i'-(k+2)}x_{k+1}).
\]
Suppose $\ell + i = \ell'+i'$, then
\begin{align*}
S(f,g) &=   x_{\ell'}(x_{\ell}x_{\ell'+i'}-x_1x_{\ell + \ell' -1 +i'}) - x_{\ell}(x_{\ell'}x_{\ell' + i'}-x_{2 \ell'-k-1 + i'}x_{k+1})\\
&= -x_1x_{\ell'}x_{\ell + \ell' -1 +i'} + x_{\ell}x_{2 \ell'-k-1 + i'}x_{k+1}.
\end{align*}
Since $i' > k-2 \ell' +2,$ we have $2 \ell' + l + i' -1 > k+ \ell - 1 \geq k+2.$ Again, since $i' \leq k - \ell'$, we have $2 \ell' + \ell + i' - k -1 \leq 2 \ell' -1 \leq k+1$. Therefore, we have 
\[
S(f,g) = -x_1(x_{\ell'}x_{\ell + \ell' -1 +i'} - x_{2 \ell' + \ell + i' -(k+2)}x_{k+1}) + x_{k+1}(x_{\ell}x_{2 \ell'-k-1 + i'} - x_1x_{2 \ell' + \ell + i' - k -2})
\]
Hence, we have $S(f,g) \rightarrow 0.$\\
\textbf{Subcase 7.3.} Let $\ell > \ell'$ and $\mathrm{gcd}(\mathrm{LT}(f),\mathrm{LT}(g)) \neq 1$. Therefore we have $\ell + i = \ell'+i'$. Since $ i \leq k-2 \ell + 2,$ we have $\ell + i \leq k - \ell + 2$. Now, since $\ell > \ell'$ and $\ell+ i = \ell'+i'$, we have $\ell'+ i' \leq k- \ell' + 2.$ But, since $i' > k - 2\ell'+2$, we have $\ell' + i' > k- \ell' + 2$. This is a contradiction and therefore $\mathrm{gcd}(\mathrm{LT}(f),\mathrm{LT}(g)) = 1$. Hence, we have $S(f,g) \rightarrow 0.$\\

\noindent\textbf{Case 8.} If $f \in B_2$ and $g \in B_4$ then $\mathrm{LT}(f) = x_{\ell}x_{\ell+i}$ such that $2 \ell \leq k+1, i \in [1,k-2\ell+2]$ and $\mathrm{LT}(g) = x_{\ell'}^2$ such that $2 \ell' > k+1$. Suppose $\mathrm{gcd}(\mathrm{LT}(f),\mathrm{LT}(g)) \neq 1$, we have $\ell+ i = \ell'$. Therefore,
\[
S(f,g) = x_{\ell'}(x_{\ell}x_{\ell'} - x_1x_{\ell + \ell'-1}) - x_{\ell}(x_{\ell'}^2-x_{2 \ell'-k-1}x_{k+1}) = -x_1x_{\ell'}x_{\ell + \ell'-1}+x_{\ell}x_{2 \ell' - k -1}x_{k+1}.
\]
Since, $2 \ell' > k+1,$ we have $2 \ell' + \ell -1 > k + \ell \geq k + 2.$ Also, since $\ell + \ell' - 1 \leq k+1,$ we have $2\ell' + \ell - k -1 \leq \ell' + 1 \leq k+1.$ Therefore, we have 
\[
S(f,g) = -x_1(x_{\ell'}x_{\ell + \ell'-1}- x_{2 \ell' + \ell -(k+2)}x_{k+1}) + x_{k+1}(x_{\ell}x_{2 \ell' - k -1} - x_1x_{2\ell' + \ell -k -2}).
\]
Hence, we have $S(f,g) \rightarrow 0.$\\

\noindent\textbf{Case 9.} If $f \in B_2$ and $g \in B_5$ then $\mathrm{LT}(f) = x_{\ell}x_{\ell+i}$ such that $2 \ell \leq k+1, i \in [1,k-2\ell+2]$ and $\mathrm{LT}(g) = x_{\ell'}x_{\ell'+i'}$ such that $2 \ell' > k+1,i' \in [1,k-\ell']$. Suppose $\mathrm{gcd}(\mathrm{LT}(f),\mathrm{LT}(g)) \neq 1$, we have either $\ell+ i = \ell'$ or $\ell+ i = \ell'+i'$. For $\ell+ i = \ell'$, we have 
\begin{align*}
S(f,g) &= x_{\ell' + i'}(x_{\ell}x_{\ell'} - x_1x_{\ell + \ell'-1}) - x_{\ell}(x_{\ell'}x_{\ell'+i'}-x_{2 \ell'-k-1+i'}x_{k+1})\\
&= -x_1x_{\ell' + i'}x_{\ell + \ell'-1} + x_{\ell}x_{2 \ell'-k-1+i'}x_{k+1}.
\end{align*}
Since, $2 \ell' > k+1,$ we have $2 \ell' + \ell + i' -1 > k + \ell \geq k + 2.$ Also, since $\ell + \ell' - 1 \leq k+1$ and $\ell'+i' \leq k+1$ we have $2\ell' + \ell - k -1 + i' \leq k+2.$ Therefore, we have 
\[
S(f,g) = -x_1(x_{\ell'+i'}x_{\ell + \ell'-1}- x_{2 \ell' + \ell +i' -(k+2)}x_{k+1}) + x_{k+1}(x_{\ell}x_{2 \ell' - k -1+i'} - x_1x_{2\ell' + \ell +i' -k -2}).
\]
For $\ell+ i = \ell'+i'$, we have
\begin{align*}
S(f,g) &= x_{\ell'}(x_{\ell}x_{\ell+i} - x_1x_{2\ell -1 + i}) - x_{\ell}(x_{\ell'}x_{\ell'+i'}-x_{2 \ell'-k-1+i'}x_{k+1})\\
&= -x_1x_{\ell'}x_{2\ell-1 + i} + x_{\ell}x_{2 \ell'-k-1+i'}x_{k+1}\\
&= -x_1x_{\ell'}x_{\ell+ \ell'-1 + i'} + x_{\ell}x_{2 \ell'-k-1+i'}x_{k+1}
\end{align*}
Since, $2 \ell' > k+1,$ we have $2 \ell' + \ell + i' -1 > k + \ell \geq k + 2.$ Also, since $\ell + \ell' - 1 \leq k+1$ and $\ell'+i' \leq k+1$ we have $2\ell' + \ell - k -1 + i' \leq k+2.$ Therefore, we have 
\[
S(f,g) = -x_1(x_{\ell'}x_{\ell + \ell'-1 + i'}- x_{2 \ell' + \ell +i' -(k+2)}x_{k+1}) + x_{k+1}(x_{\ell}x_{2 \ell' - k -1+i'} - x_1x_{2\ell' + \ell +i' -k -2}).
\]
Hence, we have $S(f,g) \rightarrow 0.$\\

\noindent\textbf{Case 10.} If $f,g \in B_3$ then $\mathrm{LT}(f) = x_{\ell}x_{\ell+i}$ such that $2 \ell \leq k+1, i \in [k-2\ell+3,k-\ell]$ and $\mathrm{LT}(g) = x_{\ell'}x_{\ell'+i'}$ such that $2 \ell' \leq k+1,i' \in [k-2\ell'+3,k-\ell]$.\\
\textbf{Subcase 10.1.} Let $\ell = \ell'$ and $i \neq i'$. We have
\begin{align*}
S(f,g) &= x_{\ell+i'}(x_{\ell}x_{\ell + i} -  x_{2 \ell -1 - k + i}x_{k+1})-x_{\ell + i}(x_{\ell}x_{\ell + i'} - x_{2 \ell -1 -k + i'}x_{k+1})\\
&=-x_{k+1}(x_{\ell+i'}x_{2\ell-1 - k +i} - x_{\ell+i}x_{2\ell-1 - k +i'})
\end{align*}
 
If $3\ell + i + i' -k  -1 \leq k+2$, then 
\[
S(f,g) = -x_{k+1}(x_{\ell+i'}x_{2\ell-1 - k +i} - x_1x_{3 \ell + i + i' - k - 2}) + x_{k+1}(x_{\ell+i}x_{2\ell-1 - k +i'} - x_1x_{3 \ell + i + i' -k - 2}).
\]
If $3\ell + i + i' -k  -1 > k+2$, then 
\[
S(f,g) = -x_{k+1}(x_{\ell+i'}x_{2\ell-1+i} - x_{3 \ell + i + i' -2k -2}x_{k+1}) + x_{k+1}(x_{\ell+i}x_{2\ell-1+i'} -x_{3 \ell + i + i'-2k -2}x_{k+1}).
\]
Hence, we have $S(f,g) \rightarrow 0.$\\
\textbf{Subcase 10.2.} Let $\ell \neq \ell'$ and $\mathrm{gcd}(\mathrm{LT}(f),\mathrm{LT}(g)) \neq 1$. WLOG assume that $\ell < \ell'$. We have either $\ell + i = \ell'$ or $\ell + i = \ell' + i'$. Suppose $\ell + i = \ell'$, then 
\begin{align*}
S(f,g) &= x_{\ell'+i'}(x_{\ell}x_{\ell'} - x_{\ell+\ell'-k -1}x_{k+1})-x_{\ell}(x_{\ell'}x_{\ell' + i'} -  x_{2 \ell' -1 - k + i'}x_{k+1})\\
& = -x_{k+1}(x_{\ell'+i'}x_{\ell+\ell'-k -1} - x_{\ell}x_{2 \ell'-1 - k +i'}).
\end{align*}

If $2\ell' + \ell + i' -k -1 \leq k+2$, then 
\[
S(f,g) = -x_{k+1}(x_{\ell'+i'}x_{\ell+\ell'-k -1} - x_1x_{2\ell' + \ell + i'-k -2}) + x_{k+1}(x_{\ell}x_{2 \ell'-1-k+i'} -  x_1x_{2\ell' + \ell + i'-k -2}).
\]
If $2\ell' + \ell + i' -k-1 > k+2$, then 
\[
S(f,g) = -x_{k+1}(x_{\ell'+i'}x_{\ell+\ell'-k-1} - x_{2\ell' + \ell + i'-2k -2}x_{k+1}) + x_{k+1}(x_{\ell}x_{2 \ell'-1-k+i'} - x_{2\ell' + \ell + i'-2k-2}x_{k+1}).
\]
Now, suppose $\ell + i = \ell' + i'$, then 
\begin{align*}
S(f,g) &= x_{\ell'}(x_{\ell}x_{\ell+i} - x_{2\ell-1 -k+ i}x_{k+1})-x_{\ell}(x_{\ell'}x_{\ell' + i'} - x_{2 \ell' -1 -k + i'}x_{k+1})\\
& = -x_{k+1}(x_{\ell'}x_{2\ell-1 -k + i} - x_{\ell}x_{2 \ell'-1 - k +i'}).
\end{align*}

If $2\ell + \ell' -1 -k + i = 2 \ell' + \ell -1 -k + i' \leq k+2$, then
\[
S(f,g) = -x_{k+1}(x_{\ell'}x_{2\ell-1-k + i} - x_1x_{2\ell + \ell' + i -k -2}) + x_{k+1}(x_{\ell}x_{2 \ell'-1-k +i'} -  x_1x_{2\ell' + \ell + i'-k -2}).
\]
If $2\ell + \ell' -1 -k + i = 2 \ell' + \ell -1 -k + i' > k+2$, then
\[
S(f,g) = -x_{k+1}(x_{\ell'}x_{2\ell-1-k + i} - x_{2\ell + \ell' + i -2k -2}x_{k+1}) + x_{k+1}(x_{\ell}x_{2 \ell'-1-k+i'} - x_{2\ell + \ell' + i-2k -2}x_{k+1}).
\]
Hence, we have $S(f,g) \rightarrow 0.$\\

\noindent\textbf{Case 11.} If $f \in B_3$ and $g \in B_4$ then $\mathrm{LT}(f) = x_{\ell}x_{\ell+i}$ such that $2 \ell \leq k+1, i \in [k-2\ell+3,k-\ell]$ and $\mathrm{LT}(g) = x_{\ell'}^2$ such that $2 \ell' > k+1$. Suppose $\mathrm{gcd}(\mathrm{LT}(f),\mathrm{LT}(g)) \neq 1$, we have $\ell+ i = \ell'$. Therefore,
\[
S(f,g) = x_{\ell'}(x_{\ell}x_{\ell'} - x_{\ell + \ell'-k-1}x_{k+1}) - x_{\ell}(x_{\ell'}^2-x_{2 \ell'-k-1}x_{k+1}) = -x_{k+1}x_{\ell'}x_{\ell + \ell' - k -1}+x_{\ell}x_{2 \ell' - k -1}x_{k+1}.
\]
If $2 \ell' + \ell - k -1 \leq k+2,$ then
\[
S(f,g) = -x_{k+1}(x_{\ell'}x_{\ell + \ell' - k -1} - x_1x_{2 \ell' + \ell - k -2}) + x_{k+1}(x_{\ell}x_{2 \ell' - k -1}- x_1x_{2 \ell' + \ell - k -2}).
\]
If $2 \ell' + \ell - k -1 > k+2,$ then
\[
S(f,g) = -x_{k+1}(x_{\ell'}x_{\ell + \ell' - k -1} - x_{2 \ell' + \ell - k -(k+2)}x_{k+1}) + x_{k+1}(x_{\ell}x_{2 \ell' - k -1}- x_{2 \ell' + \ell - k -(k+2)}x_{k+1}).
\]
Hence, we have $S(f,g) \rightarrow 0.$\\

\noindent\textbf{Case 12.} If $f \in B_3$ and $g \in B_5$ then $\mathrm{LT}(f) = x_{\ell}x_{\ell+i}$ such that $2 \ell \leq k+1, i \in [k-2\ell+3,k-\ell]$ and $\mathrm{LT}(g) = x_{\ell'}x_{\ell'+i'}$ such that $2 \ell' > k+1,i' \in [1,k-\ell']$. Suppose $\mathrm{gcd}(\mathrm{LT}(f),\mathrm{LT}(g)) \neq 1$, we have either $\ell+ i = \ell'$ or $\ell+ i = \ell'+i'$. For $\ell+ i = \ell'$, we have 
\begin{align*}
S(f,g) &= x_{\ell' + i'}(x_{\ell}x_{\ell'} - x_{\ell + \ell'-k-1}x_{k+1}) - x_{\ell}(x_{\ell'}x_{\ell'+i'}-x_{2 \ell'-k-1+i'}x_{k+1})\\
&= -x_{k+1}x_{\ell' + i'}x_{\ell + \ell'-k-1} + x_{\ell}x_{2 \ell'-k-1+i'}x_{k+1}.
\end{align*}
If $2 \ell' + \ell + i'- k -1 \leq k+2,$ then
\[
S(f,g) = -x_{k+1}(x_{\ell'+i'}x_{\ell + \ell' - k -1} - x_1x_{2 \ell' + \ell + i' - k -2}) + x_{k+1}(x_{\ell}x_{2 \ell' - k -1 + i'}- x_1x_{2 \ell' + \ell + i'- k -2}).
\]
If $2 \ell' + \ell + i'- k -1 > k+2,$ then
\[
S(f,g) = -x_{k+1}(x_{\ell'+i'}x_{\ell + \ell' - k -1} - x_{2 \ell' + \ell +i' - k -(k+2)}x_{k+1}) + x_{k+1}(x_{\ell}x_{2 \ell' - k -1}- x_{2 \ell' + \ell + i'- k -(k+2)}x_{k+1}).
\]
Now, suppose $\ell + i = \ell' + i'$, then 
\begin{align*}
S(f,g) &= x_{\ell'}(x_{\ell}x_{\ell+i} - x_{2\ell-1 -k+ i}x_{k+1})-x_{\ell}(x_{\ell'}x_{\ell' + i'} - x_{2 \ell' -1 -k + i'}x_{k+1})\\
& = -x_{k+1}(x_{\ell'}x_{2\ell-1 -k + i} - x_{\ell}x_{2 \ell'-1 - k +i'}).
\end{align*}

If $2\ell + \ell' -1 -k + i = 2 \ell' + \ell -1 -k + i' \leq k+2$, then
\[
S(f,g) = -x_{k+1}(x_{\ell'}x_{2\ell-1-k + i} - x_1x_{2\ell + \ell' + i -k -2}) + x_{k+1}(x_{\ell}x_{2 \ell'-1-k +i'} -  x_1x_{2\ell' + \ell + i'-k -2}).
\]
If $2\ell + \ell' -1 -k + i = 2 \ell' + \ell -1 -k + i' > k+2$, then
\[
S(f,g) = -x_{k+1}(x_{\ell'}x_{2\ell-1-k + i} - x_{2\ell + \ell' + i -k -(k+2)}x_{k+1}) + x_{k+1}(x_{\ell}x_{2 \ell'-1-k+i'} - x_{2\ell + \ell' + i-k -(k+2)}x_{k+1}).
\]
Hence, we have $S(f,g) \rightarrow 0.$\\

\noindent\textbf{Case 13.} If $f,g \in B_4$ and $f \neq g$ then $\mathrm{LT}(f) = x_{\ell}^2$ and $\mathrm{LT}(g) = x_{\ell'}^2$ for some $\ell, \ell' \in [2,k]$ and $\ell \neq \ell'$. We have $\mathrm{gcd}(\mathrm{LT}(f),\mathrm{LT}(g)) = 1,$ and hence $S(f,g) \rightarrow 0.$\\

\noindent\textbf{Case 14.} If $f \in B_4$ and $g \in B_5$ then $\mathrm{LT}(f) = x_{\ell}^2$ such that $2 \ell > k+1$ and $\mathrm{LT}(g) = x_{\ell'}x_{\ell'+i}$ such that $2 \ell' > k+1, i \in [1,k-\ell]$. \\
\textbf{Subcase 14.1.} Let $\ell < \ell'$. We have $\mathrm{LT}(f) = x_{\ell}^2$ and $\mathrm{LT}(g) = x_{\ell'}x_{\ell'+i}$ for some $\ell, \ell' \in [2,k]$. Since $\ell < \ell'$, we have $\mathrm{gcd}(\mathrm{LT}(f),\mathrm{LT}(g)) = 1,$ and hence $S(f,g) \rightarrow 0.$\\
\textbf{Subcase 14.2.} Let $\ell = \ell'$. We have 
\begin{align*}
S(f,g) &= x_{\ell+i}(x_{\ell}^2 -  x_{2 \ell-k -1}x_{k+1}) - x_{\ell}(x_{\ell}x_{\ell + i} -  x_{2 \ell -k -1 + i}x_{k+1})\\
& = -x_{k+1}(x_{2\ell-k-1}x_{\ell+i} - x_{\ell}x_{2\ell -1 -k + i}).
\end{align*}
If $3 \ell+ i -k -1 \leq k+2$, then 
\[
S(f,g) = -x_{k+1}(x_{2\ell-k-1}x_{\ell+i}-x_1 x_{3\ell + i-k -2})+ x_{k+1}(x_{\ell}x_{2\ell -1 -k + i} - x_1 x_{3\ell + i-k -2}).
\]
If $3 \ell+ i -1 > k+2$, then 
\[
S(f,g) = -x_{k+1}(x_{2\ell-k-1}x_{\ell+i}-x_{3 \ell + i -k -(k+2)}x_{k+1})+ x_{k+1}(x_{\ell}x_{2\ell -1 -k + i} -x_{3 \ell + i-k -(k+2)}x_{k+1}).
\]
Hence, we have $S(f,g) \rightarrow 0.$\\
\textbf{Subcase 14.3.} Let $\ell > \ell'$. If $\mathrm{gcd}(\mathrm{LT}(f),\mathrm{LT}(g)) = 1$ then $S(f,g) \rightarrow 0.$ WLOG assume that $\mathrm{gcd}(\mathrm{LT}(f),\mathrm{LT}(g)) \neq 1$ then $\ell'+ i =\ell$. Therefore,
\[
S(f,g) = x_{\ell'}(x_{\ell}^2 -  x_{2 \ell-k -1}x_{k+1}) - x_{\ell}(x_{\ell'}x_{\ell' + i} - x_1 x_{2 \ell' -k -1 + i}) = -x_{k+1}(x_{\ell'}x_{2 \ell-k - 1} - x_{\ell}x_{2 \ell'+i-k -1}).
\]
If $2 \ell + \ell' -k - 1 \leq k+2$, then
\[
S(f,g) = -x_{k+1}(x_{\ell'}x_{2 \ell-k - 1}- x_1x_{2 \ell + \ell'-k-2}) + x_{k+1}(x_{\ell}x_{2 \ell'+i-k-1}-  x_1x_{2 \ell + \ell'-k-2}).
\]
If $2 \ell + \ell'-k - 1 > k+2$, then
\[
S(f,g) = -x_{k+1}(x_{\ell'}x_{2 \ell-k - 1}- x_{2\ell + \ell' -k- (k+2)}x_{k+1}) + x_{k+1}(x_{\ell}x_{2 \ell'+i-k-1}- x_{2\ell + \ell'-k - (k+2)}x_{k+1} ).
\]
Hence, we have $S(f,g) \rightarrow 0.$\\

\noindent\textbf{Case 15.} If $f,g \in B_5$ then $\mathrm{LT}(f) = x_{\ell}x_{\ell+i}$ such that $2 \ell > k+1, i \in [1,k-\ell]$ and $\mathrm{LT}(g) = x_{\ell'}x_{\ell'+i'}$ such that $2 \ell' > k+1,i' \in [1,k-\ell]$.\\
\textbf{Subcase 15.1.} Let $\ell = \ell'$ and $i \neq i'$. We have
\begin{align*}
S(f,g) &= x_{\ell+i'}(x_{\ell}x_{\ell + i} -  x_{2 \ell -1 - k + i}x_{k+1})-x_{\ell + i}(x_{\ell}x_{\ell + i'} - x_{2 \ell -1 -k + i'}x_{k+1})\\
&=-x_{k+1}(x_{\ell+i'}x_{2\ell-1 - k +i} - x_{\ell+i}x_{2\ell-1 - k +i'})
\end{align*}
 
If $3\ell + i + i' -k  -1 \leq k+2$, then 
\[
S(f,g) = -x_{k+1}(x_{\ell+i'}x_{2\ell-1 - k +i} - x_1x_{3 \ell + i + i' - k - 2}) + x_{k+1}(x_{\ell+i}x_{2\ell-1 - k +i'} - x_1x_{3 \ell + i + i' -k - 2}).
\]
If $3\ell + i + i' -k  -1 > k+2$, then 
\[
S(f,g) = -x_{k+1}(x_{\ell+i'}x_{2\ell-1+i} - x_{3 \ell + i + i' -2k -2}x_{k+1}) + x_{k+1}(x_{\ell+i}x_{2\ell-1+i'} -x_{3 \ell + i + i'-2k -2}x_{k+1}).
\]
Hence, we have $S(f,g) \rightarrow 0.$\\
\textbf{Subcase 15.2.} Let $\ell \neq \ell'$ and $\mathrm{gcd}(\mathrm{LT}(f),\mathrm{LT}(g)) \neq 1$. WLOG assume that $\ell < \ell'$. We have either $\ell + i = \ell'$ or $\ell + i = \ell' + i'$. Suppose $\ell + i = \ell'$, then 
\begin{align*}
S(f,g) &= x_{\ell'+i'}(x_{\ell}x_{\ell'} - x_{\ell+\ell'-k -1}x_{k+1})-x_{\ell}(x_{\ell'}x_{\ell' + i'} -  x_{2 \ell' -1 - k + i'}x_{k+1})\\
& = -x_{k+1}(x_{\ell'+i'}x_{\ell+\ell'-k -1} - x_{\ell}x_{2 \ell'-1 - k +i'}).
\end{align*}

If $2\ell' + \ell + i' -k -1 \leq k+2$, then 
\[
S(f,g) = -x_{k+1}(x_{\ell'+i'}x_{\ell+\ell'-k -1} - x_1x_{2\ell' + \ell + i'-k -2}) + x_{k+1}(x_{\ell}x_{2 \ell'-1-k+i'} -  x_1x_{2\ell' + \ell + i'-k -2}).
\]
If $2\ell' + \ell + i' -k-1 > k+2$, then 
\[
S(f,g) = -x_{k+1}(x_{\ell'+i'}x_{\ell+\ell'-k-1} - x_{2\ell' + \ell + i'-2k -2}x_{k+1}) + x_{k+1}(x_{\ell}x_{2 \ell'-1-k+i'} - x_{2\ell' + \ell + i'-2k-2}x_{k+1}).
\]
Now, suppose $\ell + i = \ell' + i'$, then 
\begin{align*}
S(f,g) &= x_{\ell'}(x_{\ell}x_{\ell+i} - x_{2\ell-1 -k+ i}x_{k+1})-x_{\ell}(x_{\ell'}x_{\ell' + i'} - x_{2 \ell' -1 -k + i'}x_{k+1})\\
& = -x_{k+1}(x_{\ell'}x_{2\ell-1 -k + i} - x_{\ell}x_{2 \ell'-1 - k +i'}).
\end{align*}

If $2\ell + \ell' -1 -k + i = 2 \ell' + \ell -1 -k + i' \leq k+2$, then
\[
S(f,g) = -x_{k+1}(x_{\ell'}x_{2\ell-1-k + i} - x_1x_{2\ell + \ell' + i -k -2}) + x_{k+1}(x_{\ell}x_{2 \ell'-1-k +i'} -  x_1x_{2\ell' + \ell + i'-k -2}).
\]
If $2\ell + \ell' -1 -k + i = 2 \ell' + \ell -1 -k + i' > k+2$, then
\[
S(f,g) = -x_{k+1}(x_{\ell'}x_{2\ell-1-k + i} - x_{2\ell + \ell' + i -2k -2}x_{k+1}) + x_{k+1}(x_{\ell}x_{2 \ell'-1-k+i'} - x_{2\ell + \ell' + i-2k -2}x_{k+1}).
\]
Hence, we have $S(f,g) \rightarrow 0.$ This completes the proof.
\end{proof}

\section{Syzygies of $k[S_{a,d,k}]$}
Let $S = \langle a_1,\ldots,a_n \rangle \subset \mathbb{N}^r$ be an affine semigroup. Then the map 
$ \delta_0 : R=k[x_1,\ldots ,x_n] \longrightarrow k[S]:=\oplus_{a \in S} k {\bf t}^a$ such that $x_i \mapsto {\bf t}^{a_i} $ is $S$-graded surjective $k$-algebra homomorphism and the ideal $I_S = \mathrm{ker}(\delta_0)$ is homogeneous with respect to this grading. Now by using the $S$-graded Nakayama's lemma \cite[Proposition, 1.4]{briales-campillonakayama}, we can construct the graded $k$-algebra homomorphism $\delta_{i+1}: R^{\beta_{i+1}} \longrightarrow  R^{\beta_{i}} $ corresponding to a minimal set of homogeneous  generators of $\mathrm{ker}(\delta_i), i \geq 0.$ Hence, we obtain a minimal graded free resolution of $k[S]$ as $R$-module:
\[
\cdots R^{\beta_{i+1}} \xrightarrow{\delta_{i+1}}  R^{\beta_{i}} \xrightarrow{\delta_i} \cdots \rightarrow R^{\beta_1} \xrightarrow{\delta_1} R \xrightarrow{\delta_0} k[S] \rightarrow 0.
\]
$\beta_i$ is called the $i^{\mathrm{th}}$ Betti number of $k[S].$ For $i \geq 1,$ we have $\beta_i = \sum_{a \in S} \mathrm{dim}_k \frac{(\mathrm{ker}(\delta_{i-1}))_a}{({\bf \mathfrak{m}}\mathrm{ker}(\delta_{i-1}))_a}$, where $\mathfrak{m} = \langle x_1,\ldots,x_n \rangle$ is the homogeneous maximal ideal of $R$.\\ 
$\beta_{i,a}:= \mathrm{dim}_k \frac{(\mathrm{ker}(\delta_{i-1}))_a}{({\bf \mathfrak{m}}\mathrm{ker}(\delta_{i-1}))_a}$ is the number of generators of degree $a$ in minimal generating set of $\mathrm{ker}(\delta_{i-1}),$ called the $i^{\mathrm{th}}$ multigraded Betti number of $k[S]$ in degree $a$.\\
The Hilbert series of affine semigroup algebra $k[S]$ is defined as the formal sum of all monomials ${\bf t}^s = t_1^{s_1}\cdots t_r^{s_r},$ where $s \in S$. The Hilbert series of $k[S]$ is a rational function  of the form
\[
H(k[S],{\bf t}) = \sum_{s \in S} {\bf t}^s = \frac{\mathcal{K}(t_1,\ldots,t_r)}{\prod_{i=1}^n (1- {\bf t}^{a_i})},
\]
where $\mathcal{K}(t_1,\ldots,t_r)$ is a polynomial in $\mathbb{Z}[t_1,\ldots,t_r].$

In this section, we compute the graded minimal free resolution
and the Hilbert series of $k[S_{a,d,k}]$ for $k = 2,3,4.$ For $k = 2$, note that from the Theorem \ref{generatingset}, the defining ideal $I_{S_{a,d,2}}$ is generated by the single binomial $x_2^2-x_1x_3$. Therefore the minimal graded free resolution of $k[S_{a,d,2}]$ is given by
\[
 0 \longrightarrow R(-2(a+d)) \longrightarrow R \longrightarrow R/I_{S_{a,d,2}} \longrightarrow 0,
\]   
where $R = k[x_1,x_2,x_3].$ Hence by \cite[Proposition, 8.23]{millersturmfels}, the Hilbert series of $k[S_{a,d,2}]$ is given by
\[
H(k[S_{a,d,2}],{\bf t}) = \frac{1- {\bf t}^{2a+2d}}{\prod_{i=0}^2 (1-{\bf t}^{a+id})}.
\]
\begin{theorem}[{\bf (Buchsbaum-Eisenbud acyclicity criterion)}, {\cite[Theorem, 1.4.13]{herzogcmrings}}]  \label{Buchsbaum-Eisenbud acyclicity criterion}
	Let $R$ be a Noetherian ring and 
	\[
	F_.:0 \rightarrow F_s \xrightarrow{\delta_s} F_{s-1} \rightarrow \cdots \rightarrow F_1 \xrightarrow{\delta_1} F_0 \rightarrow 0
	\]
	a complex of finite free $R$-modules. Set $r_i = \sum_{j=i}^s(-1)^{j-i}$ rank $ F_j$. Then the following are equivalent:
	\begin{itemize}
		\item[(a)] $F_.$ is acyclic.
		\item[(b)] grade $I_{r_i}(\delta_i) \geq i$ for $i=1,\ldots,s$,
	\end{itemize}
	where $I_{r_i}(\delta_i)$ is the ideal generated by $r_i \times r_i$ minors of $\delta_i$.
\end{theorem}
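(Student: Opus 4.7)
The plan is to establish the equivalence by localizing at primes, inducting on the length $s$ of the complex, and invoking the McCoy-type relationship between ranks of free-module maps and grades of ideals of minors. First I would reduce to the local case: a complex of finitely generated free modules is acyclic if and only if its localization at every maximal ideal is acyclic, and in the Noetherian setting $\operatorname{grade}(I) = \min\{\operatorname{depth}(R_\mathfrak{p}) : I \subseteq \mathfrak{p}\}$, so both (a) and (b) are essentially local statements. This collapses the problem to a family of statements over Noetherian local rings that can be attacked with depth-theoretic tools.

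The technical engine is the McCoy lemma: for $\phi : R^m \to R^n$ between free modules over a Noetherian ring, $\phi$ is injective if and only if $\operatorname{rank}(\phi) = m$ and $\operatorname{grade}(I_m(\phi)) \geq 1$. This yields the base case $s = 1$ directly, since then $r_1 = \operatorname{rank}(F_1)$ by definition and the grade condition is precisely injectivity of $\delta_1$.

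For the inductive step, assuming the equivalence for complexes of length $< s$, if (b) holds then the grade condition $\operatorname{grade}(I_{r_s}(\delta_s)) \geq s \geq 1$ forces $\delta_s$ to be injective via McCoy. I would then form a truncated complex by replacing $F_s \xrightarrow{\delta_s} F_{s-1}$ with the quotient $F_{s-1}/\operatorname{image}(\delta_s)$, check that the alternating ranks reindex correctly and that the grade hypotheses for $i = 1, \ldots, s-1$ propagate, and conclude acyclicity of the truncated complex by induction, whence acyclicity of $F_\bullet$ follows. For the converse, working locally at a prime $\mathfrak{p}$ of maximal depth in the support of a putative homology module, I would invoke the Auslander--Buchsbaum formula together with $\operatorname{pd}_{R_\mathfrak{p}}(\operatorname{coker}\delta_1) \leq s$ to extract the required depth estimates on $I_{r_i}(\delta_i)$.

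The main obstacle will be the careful bookkeeping of the ranks: verifying that the alternating sum $r_i$ actually equals $\operatorname{rank}(\delta_i)$ in the acyclic case (via additivity of ranks after localization at the generic points of minimal primes), and coordinating this rank identity simultaneously with the grade conditions and the truncation procedure across all indices. The tight interplay between the injectivity of $\delta_i$, the rank identity $r_i = \operatorname{rank}(\delta_i)$, and the depth propagation in the truncation step is the technical core of the argument, and managing this uniformly for all $i$ is what turns the proof from a routine induction into a subtle homological bookkeeping problem.
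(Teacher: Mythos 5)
The paper does not prove this statement at all: it is quoted verbatim from Bruns--Herzog (Theorem 1.4.13 of \cite{herzogcmrings}) and used as a black box to verify the resolutions in Propositions \ref{resk=3} and \ref{resk=4}. So there is no in-paper argument to compare against; your outline should instead be measured against the standard textbook proof, which it closely tracks (localization, induction on $s$, the rank-plus-grade injectivity criterion, Auslander--Buchsbaum). Two points in your sketch, however, are genuinely problematic rather than mere bookkeeping.

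First, the truncation step as you describe it does not preserve the setting of the induction: replacing $F_s \xrightarrow{\delta_s} F_{s-1}$ by the quotient $F_{s-1}/\operatorname{image}(\delta_s)$ produces a complex whose last nonzero term is in general \emph{not} free, so the inductive hypothesis --- which is a statement about complexes of finite free modules and ideals of minors of their differentials --- simply does not apply to it. This is exactly why the standard proofs either (i) first generalize the notions of rank and $I(\phi)$ to maps whose source or target is only known to admit a finite free resolution, or (ii) abandon induction on the length of the complex in favor of induction on $\operatorname{depth} R_{\mathfrak p}$, using the Peskine--Szpiro acyclicity lemma (or a splitting argument in depth $0$, where the grade hypotheses force every $I_{r_i}(\delta_i)$ to be the unit ideal). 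Some such device is unavoidable, and your sketch does not supply one. Second, your final paragraph conflates the two implications: ``a prime of maximal depth in the support of a putative homology module'' is the mechanism for proving $(b)\Rightarrow(a)$ by contradiction, whereas the Auslander--Buchsbaum estimate $\operatorname{depth} R_{\mathfrak p} \geq \operatorname{pd}_{R_{\mathfrak p}}$ of the relevant syzygy is the mechanism for $(a)\Rightarrow(b)$; as written it is not clear which argument is meant to establish which direction, and the rank identity $r_i = \operatorname{rank}\delta_i$ (needed before $I_{r_i}(\delta_i)$ can be compared with $I(\delta_i)$ in the injectivity criterion) is asserted but its proof via additivity of ranks along the syzygy exact sequences is deferred to ``bookkeeping.'' These gaps are fillable --- the theorem is classical --- but as it stands the induction does not close.
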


\begin{proposition}\label{resk=3}
Suppose $k = 3$. Then the complex 
\[
F_.:  0 \rightarrow R^2 \xrightarrow{\delta_2} R^3 \xrightarrow{\delta_1} R \rightarrow R/I_{S_{a,d,3}} \rightarrow 0
\]
is a minimal graded free resolution of $k[S_{a,d,3}]$, where $R = k[x_1,x_2,x_3,x_4]$ and the maps $\delta_i$'s are given by 
\[
\delta_1 = 
\begin{pmatrix}
x_2^2-x_1x_3 & x_2x_3-x_1x_4 & x_3^2-x_2x_4
\end{pmatrix}
\]
and
\[
\delta_2 = 
\begin{pmatrix}
-x_3 & x_4 \\
x_2 &  -x_3 \\
-x_1 & x_2
\end{pmatrix} .
\]
\end{proposition}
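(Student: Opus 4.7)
The plan is to invoke the Buchsbaum-Eisenbud acyclicity criterion (Theorem \ref{Buchsbaum-Eisenbud acyclicity criterion}). First, specializing Theorem \ref{generatingset} to $k=3$ gives $\xi_2 = \{x_2^2-x_1x_3,\, x_2x_3-x_1x_4\}$ (here $2\ell = k+1$ and $k-2\ell+2 = 1$) and $\xi_3 = \{x_3^2-x_2x_4\}$ (here $2\ell > k+1$ and $k-\ell = 0$), so the entries of $\delta_1$ are exactly a minimal set of generators of $I_{S_{a,d,3}}$; in particular $\mathrm{Im}(\delta_1) = I_{S_{a,d,3}}$. A short column-wise calculation then confirms $\delta_1 \circ \delta_2 = 0$, so $F_\bullet$ is genuinely a complex.

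To apply Buchsbaum-Eisenbud I compute the ranks $r_2 = 2$ and $r_1 = 3 - 2 = 1$, so the conditions to check are $\mathrm{grade}\, I_1(\delta_1) \geq 1$ and $\mathrm{grade}\, I_2(\delta_2) \geq 2$. The first is immediate since $I_1(\delta_1) = I_{S_{a,d,3}}$ is a non-zero ideal in the polynomial ring. The key observation is that the three $2\times 2$ minors of $\delta_2$ are, up to signs, precisely the three generators of $I_{S_{a,d,3}}$, so $I_2(\delta_2) = I_{S_{a,d,3}}$. By Theorem \ref{CohenMacaulayness}, $R/I_{S_{a,d,3}}$ is Cohen-Macaulay of Krull dimension $2$, while $R = k[x_1,x_2,x_3,x_4]$ has dimension $4$, so $\mathrm{grade}(I_{S_{a,d,3}}) = \mathrm{ht}(I_{S_{a,d,3}}) = 2$. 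Both grade conditions hold and $F_\bullet$ is acyclic.

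Finally, every entry of $\delta_1$ and $\delta_2$ lies in the homogeneous maximal ideal $\langle x_1, x_2, x_3, x_4\rangle$, which forces the resolution to be minimal. The only step that is not routine bookkeeping is the grade bound on $I_2(\delta_2)$, and that is exactly the payoff of the Cohen-Macaulayness established in Theorem \ref{CohenMacaulayness}; the coincidence that the maximal minors of $\delta_2$ reproduce the generating set of $I_{S_{a,d,3}}$ (an instance of Hilbert-Burch) is what makes the whole verification work.
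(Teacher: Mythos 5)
Your proof is correct and follows the same overall strategy as the paper: identify the entries of $\delta_1$ with the minimal generators from Theorem \ref{generatingset}, verify $\delta_1\delta_2=0$, apply the Buchsbaum--Eisenbud criterion with $r_1=1$, $r_2=2$, and deduce minimality from the entries of the $\delta_i$ lying in the irrelevant maximal ideal. The one place where you diverge is the grade bound on $I_{r_2}(\delta_2)$: the paper simply exhibits the two minors $x_3^2-x_2x_4$ and $x_2^2-x_1x_3$ inside $I_2(\delta_2)$ and notes that their leading terms in the reverse lexicographic order are coprime, which already gives a regular sequence of length $2$; you instead observe that the three signed maximal minors of $\delta_2$ are exactly the generators of $I_{S_{a,d,3}}$, so that $I_2(\delta_2)=I_{S_{a,d,3}}$, and then argue $\mathrm{grade}=\mathrm{ht}=2$ from the dimension count. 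Your route is conceptually cleaner (it is the Hilbert--Burch picture for a codimension-two Cohen--Macaulay quotient) and gives the sharper statement $I_2(\delta_2)=I_{S_{a,d,3}}$, whereas the paper's is more elementary and needs only a containment. One small attribution to tidy up: the equality $\mathrm{grade}(I)=\mathrm{ht}(I)$ holds because the ambient polynomial ring $R$ is Cohen--Macaulay, and $\mathrm{ht}(I_{S_{a,d,3}})=2$ follows from $\dim k[S_{a,d,3}]=2$ (the semigroup spans a rank-two group); neither of these uses the Cohen--Macaulayness of $R/I_{S_{a,d,3}}$ from Theorem \ref{CohenMacaulayness}, so that citation is not actually doing the work you ascribe to it, though this does not affect the validity of the argument.
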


\begin{proof}
From the Theorem \ref{generatingset}, we see that $\delta_1$ forms a minimal generating set for $I_{S_{a,d,3}}$. Note that $\delta_1 \delta_2 = 0.$ Therefore, $F_.$ is a chain complex. Let $r_i$ be the number defined in the Theorem \ref{Buchsbaum-Eisenbud acyclicity criterion}, we have $r_1 = 1, r_2 = 2.$ It is clear that $\mathrm{grade}(I_{r_1}(\delta_1)) \geq 1$. Observe that $\{x_3^2 - x_2x_4, x_2^2 - x_1x_3\} \subset I_{r_2}(\delta_2)$. Now, with respect to the reverse lexicographic order on $R$, the leading terms of these polynomials are mutually coprime. Hence $\mathrm{grade}(I_{r_2}(\delta_2)) \geq 2$. By the Theorem \ref{Buchsbaum-Eisenbud acyclicity criterion}, we conclude that $F_.$ is a graded free resolution of $k[S_{a,d,3}]$. Minimality of the resolution follows from the fact that all entries of $\delta_i's$ belong to the homogeneous maximal ideal $\textbf{m} = \langle x_1,x_2,x_3,x_4 \rangle$.
\end{proof}

\begin{proposition}\label{hilbertseriesk=3}
The Hilbert series of $k[S_{a,d,3}]$ is given by
\[
H(k[S_{a,d,3}];{\bf{t}}) = \frac{1- \sum_{i=2}^4 {\bf{t}}^{2a+id} + \sum_{i=4}^5{\bf{t}}^{3a+id}}{\prod_{i=0}^{3} (1- {\bf{t}}^{a+id})} .
\]
\end{proposition}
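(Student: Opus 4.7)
The plan is to read off the Hilbert series directly from the minimal graded free resolution constructed in Proposition \ref{resk=3}. Recall the standard fact that for a graded $R$-module $M$ with $R = k[x_1,\ldots,x_n]$ and $\deg x_i = a_i$, a minimal graded free resolution
\[
\cdots \rightarrow R^{\beta_1}(-\text{shifts}) \rightarrow R \rightarrow M \rightarrow 0
\]
yields
\[
H(M;{\bf t}) = \frac{\sum_{i \geq 0}(-1)^i \sum_{j} \beta_{i,j}\,{\bf t}^j}{\prod_{i=1}^n(1-{\bf t}^{a_i})},
\]
because the Hilbert series is additive on short exact sequences and $H(R(-b);{\bf t}) = {\bf t}^b/\prod_i(1-{\bf t}^{a_i})$. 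Applied to the length-$2$ resolution
\[
0 \rightarrow R^2 \xrightarrow{\delta_2} R^3 \xrightarrow{\delta_1} R \rightarrow k[S_{a,d,3}] \rightarrow 0
\]
from Proposition \ref{resk=3}, the numerator of $H(k[S_{a,d,3}];{\bf t})$ becomes $1 - N_1({\bf t}) + N_2({\bf t})$, where $N_i({\bf t})$ is the sum of ${\bf t}$ raised to the multi-degrees of a minimal set of generators at the $i$-th step.

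To compute $N_1$ I would record the $S$-degrees of the three binomials appearing in $\delta_1$, using $\deg x_j = a+(j-1)d$. Namely, $\deg(x_2^2-x_1x_3) = 2a+2d$, $\deg(x_2x_3-x_1x_4) = 2a+3d$, and $\deg(x_3^2-x_2x_4) = 2a+4d$, giving $N_1({\bf t}) = \sum_{i=2}^{4}{\bf t}^{2a+id}$. For $N_2$, I would inspect the two columns of $\delta_2$: the first column lies in degree $\deg(x_3) + (2a+2d) = 3a+4d$ (consistent with $\deg(x_2)+(2a+3d)$ and $\deg(x_1)+(2a+4d)$), and the second in degree $\deg(x_4) + (2a+2d) = 3a+5d$. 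Hence $N_2({\bf t}) = \sum_{i=4}^{5}{\bf t}^{3a+id}$.

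Combining and noting that the denominator is $\prod_{i=0}^{3}(1-{\bf t}^{a+id})$ because the four variables of $R$ have degrees $a,a+d,a+2d,a+3d$, I obtain exactly the claimed formula. There is no genuine obstacle here; the only care required is in verifying that each column of $\delta_2$ is homogeneous (so that a single multi-degree can be assigned to it) and that the shifts recorded match on all three component entries. Both checks are immediate from the linear independence of $a$ and $d$ and the explicit form of $\delta_2$.
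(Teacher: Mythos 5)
Your proposal is correct and follows essentially the same route as the paper: both read off the multigraded shifts from the resolution of Proposition \ref{resk=3} (generators in degrees $2a+2d, 2a+3d, 2a+4d$; syzygies in degrees $3a+4d, 3a+5d$) and apply the standard alternating-sum formula for the Hilbert series, which the paper cites as \cite[Proposition 8.23]{millersturmfels}. Your explicit check that each column of $\delta_2$ is homogeneous of a well-defined multidegree is a detail the paper leaves implicit, but the argument is the same.
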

\begin{proof}
Let $\beta_{i,s}(k[S_{a,d,3}])$ denotes the $i^{\mathrm{th}}$ multigraded Betti number of $k[S_{a,k,3}]$ in degree $s$. Define
\[
C_i := \{ s \in S_{a,d,3} \mid \beta_{i,s}(k[S_{a,k,3}]) \neq 0 \} 
\]
Since $\mathrm{pdim}_R k[S_{a,d,3}] = 2,$ we have $C_i = \emptyset$ for $i > 2.$ Now by computing the degrees of syzygies from the Proposition \ref{resk=3}, we have
\[ 
C_0:= \{{\bf{0}}\}, \quad C_1:= \{2a+id \mid i = 2,3,4\}, \quad C_2:= \{3a+id \mid i = 4,5\}
\]
Using \cite[Proposition, 8.23]{millersturmfels}, we can write
\[
H(k[S_{a,d,3}];{\bf{t}})= \frac{\sum_{i = 0,s \in S_{a,d,3}}^2 (-1)^i \beta_{i,s}(k[S_{a,d,3}]){\bf{t}}^s}{\prod_{i = 0}^3 (1-{\bf{t}}^{a+id})}
\]
Note that, for any $s \in C_i$, $i =0,1,2$ we have  $\beta_{i,s}(k[S_{a,k,3}]) = 1.$ Hence, we have
\[
H(k[S_{a,d,3}];{\bf{t}}) = \frac{1- \sum_{i=2}^4 {\bf{t}}^{2a+id} + \sum_{i=4}^5{\bf{t}}^{3a+id}}{\prod_{i=0}^{3} (1- {\bf{t}}^{a+id})} .
\]

\end{proof}

\begin{proposition}\label{resk=4}
Suppose $k = 4$. Then the complex 
\[
F_.:  0 \rightarrow R^3 \xrightarrow{\delta_3} R^8 \xrightarrow{\delta_2} R^6 \xrightarrow{\delta_1} R \rightarrow R/I_{S_{a,d,4}} \rightarrow 0
\]
is a minimal graded free resolution of $k[S_{a,d,4}]$, where $R = k[x_1,x_2,x_3,x_4,x_5]$ and the maps $\delta_i$'s are given by
\[
\delta_1 = 
\begin{pmatrix}
x_2^2-x_1x_3 & x_2x_3-x_1x_4 & x_3^2-x_1x_5 & x_2x_4-x_1x_5 & x_3x_4-x_2x_5 & x_4^2-x_3x_5
\end{pmatrix}
\]

\[
\delta_2 = 
\begin{pmatrix}
-x_3 & 0 & -x_4 & 0 & x_5 & 0 & 0 & 0 \\
x_2 &  -x_3 & 0 & -x_4 & 0 & x_5 & x_5 & 0 \\
-x_1 & x_2 & 0 & 0 & 0 & -x_4 & 0 & x_5 \\
x_1 & 0 & x_2 & x_3 & -x_3 & 0 & -x_4  & x_5 \\
0 & -x_1 & -x_1 & 0 & x_2 & x_3 & 0 & -x_4 \\
0 & 0 & 0 & -x_1 & 0 & 0 & x_2 & x_3
\end{pmatrix}
\]
and 
\[
\delta_3 = 
\begin{pmatrix}
x_4 & -x_5 & 0 \\
0 & x_4 & -x_5 \\
-x_3 & 0 & x_5 \\
x_2 & -x_3 & 0 \\
0 & -x_3 & x_4 \\
-x_1 & x_2 & 0 \\
x_1 & 0 & -x_3 \\
0 & -x_1 & x_2
\end{pmatrix}.
\]
\end{proposition}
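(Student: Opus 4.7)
The plan is to follow the template of the proof of Proposition \ref{resk=3}. By Theorem \ref{generatingset} applied with $k=4$, the six entries of $\delta_1$ form a minimal generating set of $I_{S_{a,d,4}}$, so the cokernel of $\delta_1$ is $k[S_{a,d,4}]$. I would first verify by direct matrix multiplication that $\delta_1 \delta_2 = 0$ and $\delta_2 \delta_3 = 0$, so that $F_.$ is a chain complex of free $R$-modules. Minimality is then automatic since every nonzero entry of each $\delta_i$ lies in $\mathfrak{m} = \langle x_1,\ldots,x_5\rangle$.

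The substantive step is exactness, which I would establish via the Buchsbaum--Eisenbud criterion (Theorem \ref{Buchsbaum-Eisenbud acyclicity criterion}). The alternating rank sums are
\[
r_3 = 3, \qquad r_2 = 8 - 3 = 5, \qquad r_1 = 6 - 8 + 3 = 1,
\]
so it remains to check $\mathrm{grade}\, I_{r_i}(\delta_i) \geq i$ for $i = 1, 2, 3$. The case $i = 1$ is immediate because $I_1(\delta_1) = I_{S_{a,d,4}}$ is a proper nonzero ideal of the polynomial ring $R$.

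For $i = 2$ and $i = 3$, the strategy is to exhibit specific minors whose leading monomials under the reverse lexicographic order are pairwise coprime, since any such family automatically forms a regular sequence. For $I_3(\delta_3)$ this works cleanly: the $3 \times 3$ minor on rows $1,2,3$ of $\delta_3$ equals $x_5(x_4^2 - x_3 x_5)$ with leading term $x_4^2 x_5$; the minor on rows $3,4,7$ equals $-x_3(x_3^2 - x_1 x_5)$ with leading term $x_3^3$; and the minor on rows $6,7,8$ equals $x_1(x_1 x_3 - x_2^2)$ with leading term $x_1 x_2^2$. These three leading monomials are pairwise coprime, so $\mathrm{grade}\, I_3(\delta_3) \geq 3$. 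A parallel search through the $5 \times 5$ minors of the $6 \times 8$ matrix $\delta_2$ should yield two minors whose leading monomials are coprime, establishing the $i = 2$ bound.

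The main obstacle will be computational bookkeeping rather than anything conceptual: confirming $\delta_2 \delta_3 = 0$ requires verifying $18$ polynomial identities and $\delta_1 \delta_2 = 0$ requires $8$ more, while locating two convenient $5 \times 5$ minors of $\delta_2$ requires some trial among its many submatrices. These verifications are all routine and can be organized column by column using the binomial shape of the generators in $\delta_1$.
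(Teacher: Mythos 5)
Your proposal is correct and follows essentially the same route as the paper: check $\delta_i\delta_{i+1}=0$, apply the Buchsbaum--Eisenbud criterion with $r_1=1$, $r_2=5$, $r_3=3$, bound the grades by exhibiting minors forming regular sequences, and get minimality for free since all entries lie in $\mathfrak{m}$. The only step you defer --- producing two $5\times 5$ minors of $\delta_2$ generating an ideal of grade at least $2$ --- is carried out in the paper with the minors $x_1(x_2^2-x_1x_3)^2$ and $x_5(x_4^2-x_3x_5)^2$ (on the row/column sets $R_2,\ldots,R_6\,|\,C_1,\ldots,C_5$ and $R_1,\ldots,R_5\,|\,C_4,\ldots,C_8$), whose leading terms $x_1x_2^4$ and $x_4^4x_5$ are indeed coprime, so your parallel search would succeed.
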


\begin{proof}
From the Theorem \ref{generatingset}, we see that $\delta_1$ forms a minimal generating set for $I_{S_{a,d,4}}$. Note that $\delta_i \delta_{i+1} = 0$ for $i = 1,2.$ Therefore, $F_.$ is a chain complex. Let $r_i$ be the number defined in the Theorem \ref{Buchsbaum-Eisenbud acyclicity criterion}, we have $r_1 = 1, r_2 = 5$ and $r_3 = 3.$ It is clear that $\mathrm{grade}(I_{r_1}(\delta_1)) \geq 1$. Let $R_i$ and $C_i$ denote the rows and colums of $\delta_j$'s. Consider the minors 
\begin{center}
$D_1:= |R_2~R_3~R_4~R_5~R_6 ~|~C_1~C_2~C_3~C_4~C_5|= x_1(x_2^2-x_1x_3)^2$\\
$D_2:=|R_1~R_2~R_3~R_4~R_5~|~C_4~C_5~C_6~C_7~C_8|= x_5(x_4^2-x_3x_5)^2$
\end{center}
of $\delta_2$. Now, $\{D_1,D_2\}  \subset I_{r_2}(\delta_2)$, and have distinct irreducible factors in $R$. Therefore, $\{D_1,D_2\}$ forms a regular sequence in $R$. Hence,  grade $I_{r_2}(\delta_2) \geq 2$. Now consider the minors
\begin{center}
$D_1 : = |R_4 ~ R_6 ~ R_8 ~|~ C_1 ~ C_2 ~ C_3| = x_2^3-x_1x_2x_3$\\
$D_2 : = |R_3 ~ R_4 ~ R_7 ~|~ C_1 ~ C_2 ~ C_3| = x_3^3-x_1x_3x_5$\\
$D_3 : = |R_1 ~ R_2 ~ R_5 ~|~ C_1 ~ C_2 ~ C_3| = x_4^3-x_3x_4x_5$
\end{center}
of $\delta_3.$ Now, with respect to the lexicographic monomial order induced by $x_2 > x_4 > x_3 > x_5 > x_1$ on $R,$ the leading terms of $D_1,D_2,D_3$ are mutually coprime. Therefore $\{D_1,D_2,D_3\}$ forms a regular sequence in $R.$ Hence, $I_{r_3}(\delta_2) \geq 3$. By the Theorem \ref{Buchsbaum-Eisenbud acyclicity criterion}, we conclude that $F_.$ is a graded free resolution of $k[S_{a,d,4}].$ Minimality of the resolution follows from the fact that all entries of $\delta_i's$ belong to the homogeneous maximal ideal $\textbf{m} = \langle x_1,x_2,x_3,x_4,x_5 \rangle$.
\end{proof}

\begin{proposition}\label{hilbertseriesk=4}
The Hilbert series $H(k[S_{a,d,4}];{\bf{t}})$ of $k[S_{a,d,4}]$ is given by
\[
 \frac{1 -{\bf{t}}^{2a+4d} - \sum_{i=2}^6 {\bf{t}}^{2a+id} + \sum_{i=4}^8{\bf{t}}^{3a+id}+ \sum_{i=5}^7{\bf{t}}^{3a+id} - \sum_{i=7}^9 {\bf{t}}^{4a+id}}{\prod_{i=0}^{4} (1- {\bf{t}}^{a+id})} .
\]
\end{proposition}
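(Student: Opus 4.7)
The proof strategy mirrors that of Proposition~\ref{hilbertseriesk=3}. The plan is to apply \cite[Proposition~8.23]{millersturmfels}, which expresses the Hilbert series in terms of the multigraded Betti numbers as
\[
H(k[S_{a,d,4}];{\bf t}) = \frac{\sum_{i=0}^{3} (-1)^i \sum_{s \in S_{a,d,4}} \beta_{i,s}(k[S_{a,d,4}])\, {\bf t}^s}{\prod_{i=0}^{4} (1-{\bf t}^{a+id})}.
\]
Since Proposition~\ref{resk=4} furnishes an explicit minimal graded free resolution of length~$3$, the numerator is a finite sum and the proof reduces to reading off the multidegrees of the generators at each homological level.

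First I would list the $S$-degrees of the six binomials comprising $\delta_1$; a direct inspection gives $2a+2d,\, 2a+3d,\, 2a+4d,\, 2a+4d,\, 2a+5d,\, 2a+6d$, where the degree $2a+4d$ occurs with multiplicity two (once from $x_3^2-x_1x_5$ and once from $x_2x_4-x_1x_5$). This produces the contribution $-{\bf t}^{2a+4d} - \sum_{i=2}^{6}{\bf t}^{2a+id}$ to the numerator.

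Next I would compute the $S$-degrees of the eight columns of $\delta_2$: for each column, the $S$-degree is obtained by adding the degree of any nonzero entry to the $S$-degree of the corresponding generator of $I_{S_{a,d,4}}$. This yields the eight multidegrees $3a+4d,\, 3a+5d,\, 3a+5d,\, 3a+6d,\, 3a+6d,\, 3a+7d,\, 3a+7d,\, 3a+8d$, contributing $\sum_{i=4}^{8}{\bf t}^{3a+id} + \sum_{i=5}^{7}{\bf t}^{3a+id}$. The three columns of $\delta_3$ give third syzygies of $S$-degrees $4a+7d,\, 4a+8d,\, 4a+9d$, contributing $-\sum_{i=7}^{9}{\bf t}^{4a+id}$. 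Combining these with the constant $1$ from homological degree~$0$ produces the claimed numerator.

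The main obstacle is purely bookkeeping rather than conceptual: one must correctly track the repeated multidegrees (the multiplicity-two degree $2a+4d$ at level~$1$ and the three multiplicity-two degrees $3a+5d,\, 3a+6d,\, 3a+7d$ at level~$2$) and verify that the total counts $1,6,8,3$ agree with the ranks of the free modules in the resolution of Proposition~\ref{resk=4}.
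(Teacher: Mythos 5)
Your proposal is correct and takes essentially the same route as the paper: both read off the multigraded Betti numbers (with their multiplicities) from the resolution of Proposition \ref{resk=4} and substitute them into the formula of \cite[Proposition 8.23]{millersturmfels}. The multidegrees you list at each homological level, including the repeated ones ($2a+4d$ twice at level $1$; $3a+5d$, $3a+6d$, $3a+7d$ twice each at level $2$), agree exactly with the sets $C_1$, $C_2$, $C_3$ recorded in the paper's proof.
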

\begin{proof}
Let $\beta_{i,s}(k[S_{a,d,4}])$ denotes the $i^{\mathrm{th}}$ multigraded Betti number of $k[S_{a,k,4}]$ in degree $s$. Define
\[
C_i := \{\alpha:s  \mid s \in S_{a,d,4} ~ \& ~ \beta_{i,s}(k[S_{a,k,4}]) = \alpha (\neq 0) \} 
\]
Since $\mathrm{pdim}_R k[S_{a,d,4}] = 3,$ we have $C_i = \emptyset$ for $i > 3.$ Now by computing the degrees of syzygies from the Proposition \ref{resk=4}, we have $C_0:= \{{\bf{0}}\}$ and

\begin{center}
$C_1:= 
\left\lbrace
		\begin{array}{c}
			\{1:(2a+id) \mid i = 2,3,5,6\} 
			\cup \{ 2:(2a+4d)\}
		\end{array} 
		\right\rbrace $
\end{center}

\begin{center}
$C_2:= 
\left\lbrace
		\begin{array}{c}
		\{1:(3a+id) \mid i = 4,8 \}	\cup \{2:(3a+id) \mid i = 5,6,7\} 
		\end{array} 
		\right\rbrace $
\end{center}

\begin{center}
$C_3:= 
\left\lbrace
		\begin{array}{c}
		\{1:(4a+id) \mid i = 7,8,9 \}
		\end{array} 
		\right\rbrace $
\end{center}
Using \cite[Proposition, 8.23]{millersturmfels}, we can write
\[
H(k[S_{a,d,4}];{\bf{t}})= \frac{\sum_{i = 0,s \in S_{a,d,4}}^3 (-1)^i \beta_{i,s}(k[S_{a,d,4}]){\bf{t}}^s}{\prod_{i = 0}^4 (1-{\bf{t}}^{a+id})}
\]
Therefore with the help of $C_i$'s, we can write the Hilbert series
$H(k[S_{a,d,4}];{\bf{t}})$ as 
\[
 \frac{1 -{\bf{t}}^{2a+4d} - \sum_{i=2}^6 {\bf{t}}^{2a+id} + \sum_{i=4}^8{\bf{t}}^{3a+id}+ \sum_{i=5}^7{\bf{t}}^{3a+id} - \sum_{i=7}^9 {\bf{t}}^{4a+id}}{\prod_{i=0}^{4} (1- {\bf{t}}^{a+id})} .
\]
\end{proof}

Now, we recall the definition of Koszul rings. Let $I$ be a graded ideal in $k[x_1,\ldots,x_n]$. The ring $S = \frac{k[x_1,\ldots,x_n]}{I}$ is called Koszul if the minimal free resolution of $k$ over $S$ is linear, that is the entries in the matrices of the differentials are linear forms. It is known that if $I$ is generated by quadratic monomials then $S$ is Koszul. This leads to the following criterion of $S$ being Koszul using Gr\"{o}bner basis: 

\begin{theorem}\label{Koszul}
If $I$ has a quadratic Gr\"{o}bner basis then $S$ is Koszul.
\end{theorem}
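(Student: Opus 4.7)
The plan is to argue via the standard two-step degeneration to an initial ideal. Let $<$ be the monomial order for which $I$ admits a Gröbner basis $G$ consisting of quadratic polynomials. Then the initial ideal
\[
\mathrm{in}_<(I) = \langle \mathrm{LT}(g) : g \in G \rangle
\]
is generated by monomials of degree two. By a classical theorem of Fröberg, a quotient of a polynomial ring by a quadratic monomial ideal is Koszul; hence $k[x_1,\ldots,x_n]/\mathrm{in}_<(I)$ is Koszul.

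Next, I would transfer this Koszulness back to $S = k[x_1,\ldots,x_n]/I$ using the upper semicontinuity of graded Betti numbers under Gröbner degeneration. Concretely, taking the initial ideal realises $\mathrm{in}_<(I)$ as a flat specialisation of $I$ via a one-parameter weight family, and a standard comparison yields
\[
\beta_{i,j}^{S}(k) \;\leq\; \beta_{i,j}^{k[x_1,\ldots,x_n]/\mathrm{in}_<(I)}(k) \qquad \text{for all } i,j.
\]
Since the right-hand side vanishes whenever $j \neq i$ (by Koszulness of the monomial quotient established in the previous step), the same holds for $\beta_{i,j}^S(k)$. Thus the minimal graded free resolution of $k$ over $S$ is linear, which is exactly the definition of $S$ being Koszul.

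The only genuinely non-trivial ingredient is the Betti-number comparison between $S$ and its initial quotient, but this is a well-documented fact in the literature on Gröbner deformations (e.g., in Bruns--Herzog or in standard surveys on Koszul algebras), so in the write-up I would simply invoke it by citation rather than reprove it. Once the theorem is in hand, it is immediately applicable to our setting: by Theorem \ref{Groebnerbasis} the defining ideal $I_{S_{a,d,k}}$ admits a quadratic Gröbner basis with respect to the reverse lexicographic order, and therefore $k[S_{a,d,k}]$ is Koszul.
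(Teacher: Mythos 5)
Your argument is correct, and it is essentially the standard proof of this statement. The paper itself does not prove the theorem at all: its ``proof'' is the single line ``See \cite[Theorem 34.12]{peeva}.'' What you have written is a reconstruction of the argument behind that cited result: pass to the initial ideal, which is generated by quadratic monomials; invoke Fr\"{o}berg's theorem that quotients by quadratic monomial ideals are Koszul; and then transfer Koszulness back along the Gr\"{o}bner degeneration using the inequality $\beta_{i,j}^{S}(k) \leq \beta_{i,j}^{k[x_1,\ldots,x_n]/\mathrm{in}_<(I)}(k)$, which forces the resolution of $k$ over $S$ to be linear. Each of these three ingredients is standard and correctly deployed, and your decision to cite the Betti-number comparison rather than reprove it is reasonable (it is proved, for instance, via the filtration whose associated graded ring is the initial quotient, and appears in the literature on Gr\"{o}bner deformations and in surveys on Koszul algebras). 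The only substantive difference from the paper is one of exposition: the paper treats the entire statement as a black box from Peeva's book, whereas you open the box one level; what your version buys is a self-contained explanation of \emph{why} a quadratic Gr\"{o}bner basis suffices, at the cost of leaning on two further classical facts that you in turn cite.
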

\begin{proof}
See \cite[Theorem 34.12]{peeva}.
\end{proof}
\begin{theorem}
The ring $k[S_{a,d,k}]$ is Koszul.
\end{theorem}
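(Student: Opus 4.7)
The plan is to invoke the Gröbner basis criterion for Koszulness stated as Theorem~\ref{Koszul}. That criterion says: if the defining ideal of $S$ admits a quadratic Gröbner basis (with respect to some term order), then $S$ is Koszul. So the whole task reduces to exhibiting such a basis for $I_{S_{a,d,k}}$.

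First I would recall the explicit generating set $G = \cup_{\ell=2}^{k}\xi_{\ell}$ from Theorem~\ref{generatingset}. Inspecting the sets $\xi_{\ell}$, every binomial in $G$ has the shape $x_m x_n - x_{m'}x_{n'}$ with $m,n\in[2,k]$ and $m',n'\in[1,k+1]$; in particular each such binomial is homogeneous of standard degree $2$ in $k[x_1,\ldots,x_{k+1}]$. Next, I would appeal to Theorem~\ref{Groebnerbasis}, which already establishes that this same set $G$ is a Gröbner basis of $I_{S_{a,d,k}}$ with respect to the reverse lexicographic order. Therefore $I_{S_{a,d,k}}$ possesses a quadratic Gröbner basis.

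Finally, applying Theorem~\ref{Koszul} to the ideal $I = I_{S_{a,d,k}}$ inside $k[x_1,\ldots,x_{k+1}]$, we conclude that $k[S_{a,d,k}] = k[x_1,\ldots,x_{k+1}]/I_{S_{a,d,k}}$ is Koszul.

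There is essentially no obstacle here: all the nontrivial work has been done in Theorems~\ref{generatingset} and~\ref{Groebnerbasis}. The only thing worth double-checking is the degree count, namely that no element of any $\xi_{\ell}$ is linear or cubic, which is immediate from the definitions of the sets $\xi_{\ell}$ given just before Lemma~\ref{mingenlemma}. Thus the proof is a one-line application of the Gröbner basis criterion for Koszulness.
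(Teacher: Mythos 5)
Your proposal is correct and follows exactly the paper's own argument: Theorem~\ref{Groebnerbasis} supplies a quadratic Gr\"obner basis of $I_{S_{a,d,k}}$ (all elements of each $\xi_{\ell}$ being differences of degree-$2$ monomials), and Theorem~\ref{Koszul} then yields Koszulness. Nothing is missing.
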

\begin{proof}
From the Theorem \ref{Groebnerbasis}, we see that $I_{S_{a,d,k}}$ has a quadratic Gr\"{o}bner basis. Hence, the result follows from the Theorem \ref{Koszul}.
\end{proof}
\section{Castelnuovo–Mumford regularity of $I_{S_{a,d,k}}$}
Let $R= k[x_1,\ldots,x_n]$ and $I$ be a homogeneous ideal of $R$. Then the regularity of $I$ is defined as
\[
\mathrm{reg}(I) = \mathrm{max}_{0 \leq i \leq n-2} \{t_i - i\},
\] 
where $t_i$ is the maximum degree of the minimal $i$-syzygies of $I$ (see \cite{bayerm-regularity}).\\
Let $S = \langle a_1,\ldots,a_r,a_{r+1},\ldots,a_n\rangle$ be a simplicial affine semigroup with extremal rays $a_1,\ldots,a_r.$ Set $E = \{a_1,\ldots,a_r\}$. For any $b \in S$, consider the simplicial complex:
\[
T_b := \{F \subset E \mid b - a_F \in S\},
\]
where $a_F = \sum_{a \in F} a$ and $a_{\emptyset} = 0.$ Let $\tilde{H}_i(T_b)$ be the reduced $i$-th homology of the simplicial complex $T_b.$

\begin{lemma}\label{nonvanishinghomology}
Let $\Delta$ be a simplicial complex. Assume that $\tilde{H}_i(\Delta) \neq 0$ and let $c \in \tilde{Z}_i(\Delta) - \tilde{B}_i(\Delta),$ where $\tilde{Z}_i(\Delta)$ and $\tilde{B}_i(\Delta)$ are the spaces of cycles and boundaries respectively. Let $c = \sum_{j=1}^t \lambda_j F_j$, $\lambda_j \in k \setminus \{0\}$ for any $j = 1,\ldots,t$, $F_j \neq F_\ell$ if $j \neq \ell.$ Then if $F = \cup_{j=1}^t F_j$, one has that
\begin{center}
for all $p \in F$, there exist $q, ~1 \leq q \leq t$ such that  $F_q \cup \{p\} \notin \Delta$.
\end{center}
\end{lemma}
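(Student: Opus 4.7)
The plan is to proceed by contradiction. Suppose there exists $p \in F$ such that $F_q \cup \{p\} \in \Delta$ for every $q \in \{1,\dots,t\}$; I will derive from this that $c \in \tilde{B}_i(\Delta)$, contradicting the hypothesis.

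First I will introduce the closed star of $p$,
\[
\Delta_p := \{G \in \Delta : G \cup \{p\} \in \Delta\},
\]
and verify two standard facts about it. The first is that $\Delta_p$ is a simplicial subcomplex of $\Delta$: if $G' \subseteq G \in \Delta_p$, then $G' \cup \{p\} \subseteq G \cup \{p\} \in \Delta$, so $G' \cup \{p\} \in \Delta$ and hence $G' \in \Delta_p$. The second is that $\Delta_p$ is a cone with apex $p$: for any $G \in \Delta_p$, the face $G \cup \{p\}$ again lies in $\Delta_p$, since $(G \cup \{p\}) \cup \{p\} = G \cup \{p\} \in \Delta$. Consequently $\tilde{H}_n(\Delta_p) = 0$ for every $n$.

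The next step is to notice that under the standing assumption each $F_q$ lies in $\Delta_p$, so $c$ is in fact a chain of $\Delta_p$. Because the simplicial boundary $\partial c$ is computed from the subsets of the $F_q$'s, the equality $\partial c = 0$ which holds in $\Delta$ holds verbatim in $\Delta_p$. Hence $c \in \tilde{Z}_i(\Delta_p)$, and by the vanishing of the reduced homology of the cone there exists $c' \in C_{i+1}(\Delta_p)$ with $\partial c' = c$. Viewing $c'$ as a chain in $\Delta$ yields $c \in \tilde{B}_i(\Delta)$, the desired contradiction.

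I do not expect a serious obstacle; the argument rests on two standard facts (the closed star is a subcomplex, and cones have trivial reduced homology) together with a careful reading of the hypothesis. If a more hands-on presentation is preferred, the vanishing of reduced homology of the cone may be replaced by the explicit chain homotopy $T : C_i(\Delta_p) \to C_{i+1}(\Delta_p)$ sending $G$ to $[p,G]$ when $p \notin G$ and to $0$ otherwise; the chain $c' = \sum_j \lambda_j\, T(F_j)$ can then be checked by direct computation to satisfy $\partial c' = c$ using $\partial c = 0$.
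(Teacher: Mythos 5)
Your proof is correct. The paper itself gives no argument for this lemma --- it simply cites \cite[Lemma 1.1]{briales-pison} --- so there is no internal proof to compare against; your contrapositive argument via the closed star $\Delta_p=\{G\in\Delta : G\cup\{p\}\in\Delta\}$ is the standard and essentially inevitable one. The two facts you rely on are both verified correctly: $\Delta_p$ is a subcomplex because $\Delta$ is closed under taking subsets, and it is a cone with apex $p$ (note $\{p\}\in\Delta_p$ since $p$ lies in some face $F_q\in\Delta$, so $\Delta_p$ is nonempty and $\tilde{H}_n(\Delta_p)=0$ for all $n$, including $n=0$ where connectivity of the cone is what is needed for the reduced group to vanish). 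Under the negation of the conclusion every $F_q$ lies in $\Delta_p$, the cycle condition $\partial c=0$ persists in the subcomplex, and acyclicity of the cone forces $c\in\tilde{B}_i(\Delta_p)\subseteq\tilde{B}_i(\Delta)$, contradicting the choice of $c$. The optional explicit contraction $T(G)=[p,G]$ for $p\notin G$ and $T(G)=0$ otherwise, with $\partial T+T\partial=\mathrm{id}$ on reduced chains, makes the same point constructively; either version is a complete proof.
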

\begin{proof}
See \cite[Lemma 1.1]{briales-pison}.
\end{proof}

If $I_S$ is homogeneous ideal and $b \in S$ then $\|b\| = \alpha_1 + \ldots + \alpha_n$, where $b = \sum_{i=1}^n \alpha_i a_i$, is well defined. Define the set
\[
D(i) := \{ b \in S \mid \tilde{H}_i(T_b) \neq 0\}.
\]

\begin{theorem}\label{regularity}
With the above notations, assume that $I_S$ is homogeneous, then 
\[
\mathrm{reg}(I) = \mathrm{max}_{-1 \leq i \leq r-2} \{u_i - i\},
\]
where $u _i = \mathrm{max}\{\|b\| \mid b \in D(i)\}$.
\end{theorem}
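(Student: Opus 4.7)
The plan is to pass from the regularity of $I_S$ to its multigraded Betti numbers over $R = k[x_1,\ldots,x_n]$ and then to the reduced simplicial homology of the complexes $T_b$. Since $I_S$ is homogeneous with respect to the $\|\cdot\|$-grading, every minimal $i$-syzygy acquires a well-defined total degree, and one has
\[
t_i = \max\{\|b\| : \beta_{i,b}^R(I_S) \neq 0\}, \qquad \mathrm{reg}(I_S) = \max_{i \geq 0}\{t_i - i\}.
\]
So it suffices to identify, multidegree by multidegree, when $\beta_{i,b}^R(I_S)$ is nonzero.

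The heart of the argument is to establish
\[
\beta_{i,b}^R(I_S) \neq 0 \iff \tilde{H}_i(T_b; k) \neq 0
\]
for every $b \in S$ and $i \geq -1$. In one direction, a nonzero class $[c] \in \tilde{H}_i(T_b)$ with representative $c = \sum_j \lambda_j F_j$ lifts to an $i$-chain of monomials $\{{\bf x}^{F_j}\,{\bf x}^{b - a_{F_j}}\}$ whose alternating sum is a syzygy of $I_S$ in multidegree $b$; Lemma \ref{nonvanishinghomology} is precisely what is needed to show that this syzygy cannot be reduced by others and hence contributes to the minimal resolution. In the opposite direction, from a minimal $i$-syzygy of $I_S$ in degree $b$ one reads off, via the faces $F \subseteq E$ appearing in its monomial support (necessarily satisfying $b - a_F \in S$), a nonbounding cycle of $T_b$.

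The range of $i$ is then pinned down by the combinatorics of $T_b$: since $T_b$ is a simplicial complex on at most $r$ vertices, its reduced homology vanishes outside $-1 \leq i \leq r-2$, and in dimension $-1$ it vanishes as well because $T_b$ always contains the empty face (as $b \in S$). Substituting the identification $t_i = u_i$ into the formula for $\mathrm{reg}(I_S)$ then yields
\[
\mathrm{reg}(I_S) = \max_{-1 \leq i \leq r-2}\{u_i - i\},
\]
as required.

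The principal obstacle is the bijection between multigraded Betti numbers and $\tilde{H}_i(T_b)$: the customary squarefree divisor complex uses \emph{all} generators of $S$, so reducing to the complex $T_b$ built only from the $r$ extremal rays is the place where the simplicial hypothesis on $S$ is really being used, and Lemma \ref{nonvanishinghomology} is the technical engine that guarantees the minimality of the syzygies produced from nontrivial cycles. Once that correspondence is in hand, the rest is bookkeeping on homological degrees.
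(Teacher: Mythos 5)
The paper does not actually prove this theorem: it is imported wholesale from the literature, with the proof being the citation \cite[Theorem 16]{briales-campillo}. Your attempt to reconstruct an argument is therefore more ambitious than what the paper does, but it rests on a false central claim, namely the degreewise equivalence $\beta_{i,b}^{R}(I_S)\neq 0 \iff \tilde{H}_i(T_b;k)\neq 0$. The complexes $T_b$ are built only on the $r$ extremal rays, so they have at most $r$ vertices and $\tilde{H}_i(T_b)=0$ for all $i\geq r-1$; if your equivalence held, it would force $\mathrm{pd}_R(R/I_S)\leq r-1$, whereas in the Cohen--Macaulay case $\mathrm{pd}_R(R/I_S)=n-r$, which is generally much larger. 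The paper's own computations already refute the claim: for $S_{a,d,3}$ one has $r=2$, so every $T_b$ lives on the two vertices $a$ and $a+3d$ and $\tilde{H}_1(T_b)=0$ for all $b$, yet Proposition \ref{resk=3} exhibits nonzero minimal first syzygies of $I_{S_{a,d,3}}$ in degrees $3a+4d$ and $3a+5d$, i.e.\ $\beta_{1,b}(I_{S_{a,d,3}})\neq 0$ there. It is the full squarefree divisor complex on all $n$ generators, not $T_b$, that computes the multigraded Betti numbers over $R$.

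What the complexes $T_b$ do compute is (up to the shift $i\mapsto i-1$) the Koszul homology of the sequence ${\bf t}^{a_1},\ldots,{\bf t}^{a_r}$ of extremal-ray elements acting on $k[S]$: one has $H_i({\bf t}^{a_1},\ldots,{\bf t}^{a_r};k[S])_b\cong \tilde{H}_{i-1}(T_b;k)$. Since each generator has $\|\cdot\|$-degree $1$, these elements form a homogeneous system of parameters consisting of linear forms, and the regularity is then extracted from the standard expression of $\mathrm{reg}$ in terms of the (finite-length) Koszul homology of such a sequence, $\mathrm{reg}(k[S])=\max_i\{\mathrm{end}\,H_i-i\}$, rather than from the minimal free resolution over $R$ matched degree by degree. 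You correctly located the crux --- passing from all $n$ generators to the $r$ extremal rays --- but asserting the Betti-number equivalence does not bridge it, and Lemma \ref{nonvanishinghomology} cannot repair this; the identification with Koszul homology (or an equivalent device, as in the cited source) is the missing ingredient. This is a genuine gap.
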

\begin{proof}
see \cite[Theorem 16]{briales-campillo}.
\end{proof}

\begin{theorem}\label{regI_S_akd}
The Castelnuovo-Mumford regularity of $I_{S_{a,d,k}}$, $\mathrm{reg}(I_{S_{a,d,k}}) = 2.$
\end{theorem}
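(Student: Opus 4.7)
The plan is to invoke Theorem \ref{regularity} with $r=2$. Since $E=\{a, a+kd\}$ has only two elements, the index $i$ in that theorem ranges over $\{-1,0\}$, so
\[
\mathrm{reg}(I_{S_{a,d,k}}) \;=\; \max\{u_{-1}+1,\; u_0\}.
\]
Because $T_b$ has at most the two vertices $\{a\}, \{a+kd\}$ and one potential edge, each of $D(-1)$ and $D(0)$ unfolds into an explicit set of conditions on $b$, which makes both quantities computable by hand.

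For $u_{-1}$, a case check on $T_b$ shows that $\tilde{H}_{-1}(T_b)\neq 0$ exactly when $T_b=\{\emptyset\}$, i.e.\ when $b-a\notin S_{a,d,k}$ and $b-(a+kd)\notin S_{a,d,k}$, which is precisely $b\in \mathrm{Ap}(S_{a,d,k},E)$. By Lemma \ref{Aperyset} this set equals $\{0, a+d, \ldots, a+(k-1)d\}$, and each nonzero member is a minimal generator, so $u_{-1}=1$ and the $i=-1$ term in the max is $2$. The crucial step is then to prove $D(0)=\emptyset$. Analyzing $T_b$, the condition $b\in D(0)$ amounts to
\[
b-a\in S_{a,d,k},\quad b-(a+kd)\in S_{a,d,k},\quad b-(2a+kd)\notin S_{a,d,k}.
\]
Here I would invoke Cohen-Macaulayness (Theorem \ref{CohenMacaulayness}) together with the criterion of Theorem \ref{rosalesoncm} to extract the unique Ap\'ery decomposition
\[
b \;=\; w + n_1 a + n_2(a+kd), \qquad w\in\mathrm{Ap}(S_{a,d,k},E),\ n_1,n_2\in\mathbb{N},
\]
for every $b\in S_{a,d,k}$. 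Uniqueness then immediately yields $b-a\in S_{a,d,k}\Leftrightarrow n_1\geq 1$, $b-(a+kd)\in S_{a,d,k}\Leftrightarrow n_2\geq 1$, and $b-(2a+kd)\in S_{a,d,k}\Leftrightarrow n_1,n_2\geq 1$, since for example a valid Ap\'ery decomposition of $b-a$, with $a$ added back, must coincide with the Ap\'ery decomposition of $b$. The conjunction of the first two conditions is thus equivalent to the third, so the three conditions defining $D(0)$ cannot be simultaneously satisfied and $D(0)=\emptyset$.

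The main obstacle is establishing the uniqueness of the Ap\'ery decomposition cleanly enough to harvest the three equivalences above; both parts flow from the Cohen-Macaulay criterion of Theorem \ref{rosalesoncm}, which states that distinct elements of $\mathrm{Ap}(S_{a,d,k},E)$ cannot differ by an element of the subgroup generated by $\{a, a+kd\}$. Once that is in place, the conclusion is immediate: $u_0=-\infty$ (empty max), so
\[
\mathrm{reg}(I_{S_{a,d,k}}) \;=\; u_{-1}+1 \;=\; 2.
\]
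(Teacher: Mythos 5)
Your proposal is correct and follows the same overall strategy as the paper: apply Theorem \ref{regularity} with $r=2$, identify $D(-1)$ with $\mathrm{Ap}(S_{a,d,k},E)$ to get $u_{-1}=1$, and reduce everything to showing $D(0)=\emptyset$. The difference lies in how that central claim is established. The paper simply lists the possible complexes $T_b$ --- omitting the disconnected complex $\{\emptyset,\{a\},\{a+kd\}\}$ from the list without comment --- and observes that each listed complex has vanishing $\tilde{H}_0$ via Lemma \ref{nonvanishinghomology}; the substantive point, namely that $b-a\in S_{a,d,k}$ and $b-(a+kd)\in S_{a,d,k}$ together force $b-(2a+kd)\in S_{a,d,k}$, is left implicit. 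You supply exactly this missing implication, deriving it from the existence and uniqueness of the Ap\'{e}ry decomposition $b=w+n_1a+n_2(a+kd)$ with $w\in\mathrm{Ap}(S_{a,d,k},E)$, where uniqueness follows from the Cohen--Macaulay criterion of Theorem \ref{rosalesoncm} (distinct Ap\'{e}ry elements do not differ by an element of $G(\{a,a+kd\})$), which is available by Theorem \ref{CohenMacaulayness}. This makes your argument more self-contained precisely where the paper's proof is terse; the cost is the routine verification of the decomposition, and the payoff is that the three membership conditions defining $D(0)$ are visibly contradictory. Your computation of $u_{-1}$ and the final assembly $\mathrm{reg}(I_{S_{a,d,k}})=u_{-1}+1=2$ agree with the paper.
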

\begin{proof}
From the Theorem \ref{generatingset}, note that $I_{S_{a,d,k}}$ is homogeneous with respect to the standard grading on $k[x_1,\ldots,x_{k+1}].$ We claim that $D(0)= \emptyset.$ Suppose there exist $b \in S$ such that $\tilde{H}_0 (T_b) \neq 0.$ Since, $E = \{a, a+kd\}$, observe that the only possible choices of $T_b$ are
\begin{center}
$T_b = \{\emptyset, \{a\}\}$ or $ \{\emptyset, \{a+kd\}\}$ or $\{\emptyset, \{a\}, \{a+kd\}, \{a, a+kd\} \}.$
\end{center}
In each case, if there exist $0 \neq c \in \tilde{H}_0 (T_b)$, we get a contradiction to \ref{nonvanishinghomology}. Hence $D(0) = \emptyset$. Therefore by the Theorem \ref{regularity}, we have
\begin{center}
$\mathrm{reg}(I_{S_{a,d,k}}) = u_{-1}+1,$
\end{center}
where $u_{-1} = \mathrm{max}\{\|b\| \mid b \in D(-1)\}$. Note that $D(-1) = \mathrm{Ap}(S_{a,d,k}, E)$. Therefore, we conclude that
\begin{center}
$\mathrm{reg}(I_{S_{a,d,k}}) = \mathrm{max}\{\|b\| + 1 \mid b \in \mathrm{Ap}(S_{a,d,k}, E)\}.$ 
\end{center}
By the Lemma \ref{Aperyset}, we see that $\|b\| = 1,$ for all $b \in \mathrm{Ap}(S_{a,d,k}, E).$ Hence, we have  $\mathrm{reg}(I_{S_{a,d,k}}) = 2.$
\end{proof}

\begin{corollary}
The Castelnuovo-Mumford regularity of $k[S_{a,d,k}]$ is $1$.
\end{corollary}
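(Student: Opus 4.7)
The plan is to obtain this corollary as an immediate consequence of Theorem \ref{regI_S_akd}, which has just established that $\mathrm{reg}(I_{S_{a,d,k}}) = 2$. The only additional ingredient needed is the standard relationship between the Castelnuovo--Mumford regularity of a homogeneous ideal and the regularity of its quotient ring.

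More precisely, for any nonzero homogeneous ideal $I$ in the polynomial ring $R = k[x_1,\ldots,x_n]$ equipped with the standard grading, one has the identity $\mathrm{reg}(R/I) = \mathrm{reg}(I) - 1$. This follows by applying the functor computing regularity to the short exact sequence
\[
0 \longrightarrow I_{S_{a,d,k}} \longrightarrow R \longrightarrow k[S_{a,d,k}] \longrightarrow 0,
\]
where $R = k[x_1,\ldots,x_{k+1}]$; concretely, a minimal graded free resolution of $k[S_{a,d,k}]$ is obtained from one for $I_{S_{a,d,k}}$ by prepending $R$ in homological degree zero, so the $i$-th syzygy module of $k[S_{a,d,k}]$ is the $(i-1)$-st syzygy module of $I_{S_{a,d,k}}$, and the standard shift by $1$ in the regularity formula $\mathrm{reg}(M) = \max\{t_i - i\}$ falls out.

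For this identity to apply we need to know that $I_{S_{a,d,k}}$ is homogeneous with respect to the standard grading of $R$. This was already observed in Theorem \ref{generatingset}: the explicit generating set $G = \bigcup_{\ell=2}^k \xi_\ell$ consists entirely of quadratic binomials, so the ideal is homogeneous in the standard grading. There is no real obstacle here; the step is essentially bookkeeping, and combining $\mathrm{reg}(I_{S_{a,d,k}}) = 2$ with $\mathrm{reg}(k[S_{a,d,k}]) = \mathrm{reg}(I_{S_{a,d,k}}) - 1$ yields $\mathrm{reg}(k[S_{a,d,k}]) = 1$, completing the argument.
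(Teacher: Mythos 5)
Your proposal is correct and follows exactly the paper's own argument: invoke the identity $\mathrm{reg}(R/I) = \mathrm{reg}(I) - 1$ for a homogeneous ideal $I$ and combine it with $\mathrm{reg}(I_{S_{a,d,k}}) = 2$ from Theorem \ref{regI_S_akd}. The extra justification you give for the shift-by-one identity via the syzygies of the short exact sequence is sound but not needed beyond what the paper states.
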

\begin{proof}
Note that $\mathrm{reg}(I) = \mathrm{reg}(R/I) + 1$, for any homogeneous ideal $I$ of $R.$ Now the result follows from the Theorem \ref{regI_S_akd}.
\end{proof}

\section{An extension of $S_{a,d,k}$}
Let $b \in \mathbb{N}^2$ be such that $\mu b \in S_{a,d,k}$ for some $\mu > 0$ and $b \notin S_{a,d,k}$. Also, suppose that $\mu b = \sum_{i=1}^{k+1} \lambda_i (a+(i-1)d)$, where $\lambda_i \in \mathbb{N}$ such that either of $\lambda_1$ or $\lambda_{k+1}$ is non-zero. Define $S_{a,d,k}^b:= \langle a, a+d, \ldots, a+kd, b \rangle$. Let $R=k[x_1,\ldots,x_{k+1},y]$ and $\phi: k[x_1,\ldots,x_{k+1},y] \rightarrow k[t_1,t_2]$ such that $x_i \mapsto {\bf t}^{a+(i-1)d}$ for $i=1,\ldots,k+1$, $y \mapsto {\bf t}^b$. The semigroup ring $k[S_{a,d,k}^b]$ is isomorphic to the quotient ring $\frac{R}{I_{S_{a,d,k}^b}}$, where $I_{S_{a,d,k}^b}$ is the kernel of the algebra homomorphism $\phi$. These notation and assumptions will be followed throughout the section.

\begin{lemma}\label{simplicialb}
$S_{a,d,k}^b$ is a simplicial affine semigroup in $\mathbb{N}^2$ with respect to the extremal rays $a$ and $a+kd.$
\end{lemma}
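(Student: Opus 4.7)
The plan is to verify the two conditions in the definition of simplicial affine semigroup for $S_{a,d,k}^b$ with respect to the designated pair $E = \{a, a+kd\}$. The first condition, $\mathbb{Q}$-linear independence of $a$ and $a+kd$, is immediate from the hypothesis that $a$ and $d$ are $\mathbb{Q}$-linearly independent: any relation $\alpha a + \beta(a+kd) = 0$ rewrites as $(\alpha+\beta)a + (\beta k)d = 0$, forcing $\beta = 0$ and then $\alpha = 0$.

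For the second condition, the strategy is to exhibit, for each minimal generator of $S_{a,d,k}^b$, a positive integral multiple lying in $\langle a, a+kd \rangle$, and then extend to arbitrary elements by taking a common scalar. For the generators $a+id$ with $1 \leq i \leq k-1$, the identities $k(a+id) = (k-i)a + i(a+kd)$ used in Lemma \ref{simplicial} already suffice, and the boundary cases $i = 0, k$ are trivial. For the new generator $b$, I would use the standing hypothesis $\mu b = \sum_{i=1}^{k+1}\lambda_i(a+(i-1)d) \in S_{a,d,k}$ for some $\mu > 0$; multiplying both sides by $k$ and invoking the same identities from Lemma \ref{simplicial}, we obtain $k \mu b \in \langle a, a+kd \rangle$.

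Finally, any element $c \in S_{a,d,k}^b$ can be written as $c = c_1 + m b$ with $c_1 \in S_{a,d,k}$ and $m \in \mathbb{N}$. Since $k c_1 \in \langle a, a+kd \rangle$ by Lemma \ref{simplicial} applied term-by-term, and $k \mu b \in \langle a, a+kd \rangle$ as above, we conclude that $k\mu c = \mu(k c_1) + m(k\mu b) \in \langle a, a+kd \rangle$, so the positive integer $\alpha = k\mu$ witnesses the simplicial condition uniformly for every $c$.

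There is no genuine obstacle in this lemma; the only subtlety is recognizing that the hypothesis $\mu b \in S_{a,d,k}$ reduces the new generator $b$ to the already-handled case, so that the argument of Lemma \ref{simplicial} carries over with only this one extra reduction step.
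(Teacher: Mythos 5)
Your proposal is correct and follows essentially the same route as the paper: reduce the new generator $b$ to the previously handled case via the hypothesis $\mu b \in S_{a,d,k}$, and then invoke the identities $k(a+id)=(k-i)a+i(a+kd)$ from Lemma \ref{simplicial}. The paper states this more tersely, but the underlying argument — producing a single positive multiple of any element lying in $\langle a, a+kd\rangle$ — is identical, and your extra detail (the uniform scalar $k\mu$ and the check of $\mathbb{Q}$-linear independence) only makes the write-up more complete.
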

\begin{proof}
Since $\mu b \in S_{a,d,k}$ and $S_{a,d,k}$ is simplicial with respect to the extremal rays $a$ and $a+kd$, there exist $\alpha (\neq 0) \in \mathbb{N}$ such that $\alpha(\mu b) = (\alpha \mu)b = \lambda_1 a + \lambda_2(a+kd)$ for some $\lambda_1, \lambda_2 \in \mathbb{N}.$ Now, it is immediate from Lemma \ref{simplicial}.
\end{proof}
 Now we recall the definition of Gluing of two subsemigroups of $\mathbb{N}^r$:
 \begin{definition}[{\cite[Theorem 1.4]{rosales97}}]
	Let $S \subseteq \mathbb{N}^r$ be an affine semigroup and $G(S)$ be the group spanned by $S$, that is, $G(S) =\{ a-b \in \mathbb{Z}^r \mid a, b \in S\}$. Let $A$ be the minimal generating system of $S$ and $A = A_1 \amalg A_2$ be a nontrivial partition of $A$ . Let $S_i$ be the submonoid of $\mathbb{N}^r$ generated by $A_i, i \in {1, 2}$. Then $S = S_1 + S_2.$ We say that $S$ is the gluing of $S_1$ and $S_2$ by $s$ if 
	\begin{enumerate}[{\rm(1)}]
		\item $s \in S_1 \cap S_2$ and,
		\item $G(S_1 ) \cap G(S_2) = s\mathbb{Z}$. 
	\end{enumerate}
\end{definition}

\begin{proposition}\label{Aperysetb}
Let $\mu$ be the smallest positive integer such that $\mu b \in S_{a,d,k}$. Then the Apery set of $S_{a,d,k}^b$ with respect to $E = \{a, a+kd \}$ is
\[
\mathrm{Ap}(S_{a,d,k}^b,E)= \left\lbrace 
	\begin{array}{c}
	\{(0,0)\}  \cup \{ b, 2b, \ldots, (\mu-1)b \}
	 \cup \{(a+id)+ \ell b \mid 1 \leq i \leq k-1 , 0 \leq \ell \leq \mu-1\} 
	\end{array}
	\right\rbrace
\]
\end{proposition}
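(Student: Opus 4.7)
The plan is to determine $\mathrm{Ap}(S_{a,d,k}^b, E)$ by reducing it to Lemma \ref{Aperyset} via a canonical-form argument. First I would establish that every $c \in S_{a,d,k}^b$ admits a decomposition $c = s + \ell b$ with $s \in S_{a,d,k}$ and $0 \le \ell \le \mu - 1$: starting from any representation $c = s_0 + \beta b$ with $s_0 \in S_{a,d,k}$, $\beta \in \mathbb{N}$, one absorbs copies of $\mu b \in S_{a,d,k}$ into the $S_{a,d,k}$-part until $\beta < \mu$. With this in hand, the proposition would reduce to the claim that, for such a decomposition, $c \in \mathrm{Ap}(S_{a,d,k}^b, E)$ if and only if $s \in \mathrm{Ap}(S_{a,d,k}, E)$; combined with Lemma \ref{Aperyset}, which identifies $\mathrm{Ap}(S_{a,d,k}, E) = \{0, a+d, \ldots, a+(k-1)d\}$, this gives the stated description ($0$ corresponds to $s = 0, \ell = 0$; the $\ell b$ to $s = 0$; and the hybrids $(a+id) + \ell b$ to $s = a + id$).

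The easy direction of the claim is immediate: if $s - a \in S_{a,d,k}$, then $c - a = (s-a) + \ell b \in S_{a,d,k}^b$, and analogously for $a + kd$. For the converse, I would assume $s \in \mathrm{Ap}(S_{a,d,k}, E)$ and suppose toward contradiction that $c - a \in S_{a,d,k}^b$; writing $c - a = t + mb$ with $t \in S_{a,d,k}$ yields the identity $s - a - t = (m - \ell)b$. If $m = \ell$, then $s - a = t \in S_{a,d,k}$, contradicting $s \in \mathrm{Ap}(S_{a,d,k}, a)$. If $m > \ell$, rearrangement gives $s = a + t + (m-\ell)b$; the case $s = 0$ is impossible, and for $s = a + id$ (with $1 \le i \le k-1$) one obtains $id = t + (m-\ell)b \in \mathrm{cone}(S_{a,d,k})$, contradicting the easy computation (via $\mathbb{Q}$-linear independence of $a, d$) that $id \notin \mathrm{cone}(a, a+kd)$ for $1 \le i \le k-1$.

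The remaining sub-case $m < \ell$ with $s = a + id$ is the main technical obstacle. Setting $n = \ell - m \in \{1, \ldots, \mu-1\}$, one must show $nb + id \notin S_{a,d,k}$. Writing $\mu b = A a + B d$ with $A = \sum \lambda_i$ and $B = \sum (i-1)\lambda_i$, and equating $a$- and $d$-coefficients of a putative expression $nb + id = \sum_{j=0}^k \alpha_j (a + jd)$, the $\mathbb{Q}$-linear independence of $a, d$ forces $\sum \alpha_j = nA/\mu$ and $\sum j\alpha_j = nB/\mu + i$; integrality then gives $\mu \mid nA$ and $\mu \mid nB$. Setting $g = \gcd(n, \mu)$ and $\mu_1 = \mu/g$, these imply $\mu_1 \mid A$ and $\mu_1 \mid B$, whence $g b = (A/\mu_1) a + (B/\mu_1) d$. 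Since $0 \le B \le kA$ (as $\mu b \in \mathrm{cone}(S_{a,d,k})$), the element $gb$ is a non-negative integer combination of $a, a+d, \ldots, a+kd$, so $gb \in S_{a,d,k}$ with $1 \le g \le n < \mu$, contradicting the minimality of $\mu$. (The sub-case $s = 0, m < \ell$ gives $nb \in S_{a,d,k}$ directly, a simpler instance of the same contradiction.)

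The condition $c - (a+kd) \notin S_{a,d,k}^b$ is handled symmetrically. In the $m > \ell$ branch one uses that $(i - k) d$ has a strictly negative component (since $d \in \mathbb{N}^2$ is non-zero) and so cannot lie in $\mathbb{N}^2$, while the $m < \ell$ branch reduces to the same divisibility argument applied to $nb - (k-i)d \in S_{a,d,k}$. The main obstacle, and the place where the minimality of $\mu$ plays its essential role, is precisely the divisibility-based sub-case above; everything else is a clean case analysis on the canonical form.
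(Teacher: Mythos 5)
Your proof is correct, but it follows a genuinely different route from the paper's. The paper first shows (via a cyclic-group argument using the minimality of $\mu$) that $S_{a,d,k}^b$ is a gluing of $S_{a,d,k}$ and $\langle b\rangle$ over $\mu b$, invokes Rosales' theorem to get the presentation $I_{S_{a,d,k}^b}=I_{S_{a,d,k}}+\langle y^{\mu}-\mathbf{x}^{\lambda}\rangle$, then reads off the Ap\'ery set as the set of $S^b_{a,d,k}$-degrees of a monomial $k$-basis of $R/(I_{S_{a,d,k}^b}+\langle x_1,x_{k+1}\rangle)$ via the Ojeda--Vigneron-Tenorio correspondence. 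You instead work entirely at the level of the semigroup: the normal form $c=s+\ell b$ with $0\le \ell\le \mu-1$ reduces everything to Lemma \ref{Aperyset}, the cases $m\ge \ell$ are dispatched by the cone computation $id\notin\mathrm{cone}(a,a+kd)$, and the crux --- ruling out $nb+id\in S_{a,d,k}$ for $1\le n<\mu$ --- is your divisibility argument, which correctly uses $B=\sum(i-1)\lambda_i\le kA$ to realize $gb$ as an element of $S_{a,d,k}$ and contradict the minimality of $\mu$ (note $A\ge 1$ since $\mu b\ne 0$, so $gb$ is a legitimate nonzero combination). The trade-off: the paper's gluing route simultaneously produces the presentation of $I_{S_{a,d,k}^b}$, which is reused later for the mapping-cone resolutions in Corollary \ref{resb} and in the examples, and it is where the standing hypothesis that $\lambda_1$ or $\lambda_{k+1}$ is nonzero actually earns its keep; your argument is self-contained and elementary, does not need the gluing machinery or that hypothesis, and makes transparent exactly where the minimality of $\mu$ enters. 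Both are valid; if you adopt yours, the presentation of $I_{S_{a,d,k}^b}$ must still be established separately for the later sections.
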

\begin{proof}
Since $\mu b \in S_{a,d,k}$, we have $G(\{\mu b\}) \subseteq G(S_{a,d,k})\cap G(\{b\})$. Since $G(\{b\})$ is cyclic group, we have $G(S_{a,d,k})\cap G(\{b\})$ is also cyclic. Therefore, there exist $0 \neq \nu \in \mathbb{N}$ such that $G(\{\nu b\}) = G(S_{a,d,k})\cap G(\{b\})$. But, since $\mu$ be the smallest positive integer such that $\mu b \in S_{a,d,k}$, we have $G(\{\nu b\}) = G(S_{a,d,k})\cap G(\{b\}) = G(\{\mu b\})$. Therefore, $S_{a,d,k}^b$ is a gluing of $S_{a,d,k}$ and $\langle b \rangle$ by $\mu b$. Now, we have $\mu b = \sum_{i=1}^{k+1} \lambda_i (a+(i-1)d)$, where $\lambda_i \in \mathbb{N}$ such that either of $\lambda_1$ or $\lambda_{k+1}$ is non-zero. Therefore, by \cite[Theorem 1.4]{rosales97}, we have 
\[ 
I_{S_{a,d,k}^b} = I_{S_{a,d,k}} + \langle y^{\mu} - {\bf x}^{\lambda} \rangle, \quad \mathrm{where} \quad {\bf x} = x_1 \cdots x_{k+1}, \lambda = (\lambda_1, \ldots, \lambda_{k+1}).
\]
Now, define $R' = \frac{k[x_1,\ldots,x_{k+1},y]}{I_{S_{a,d,k}^b} + \langle x_1, x_{k+1} \rangle}$. Then $R'$ is generated by monomials as a $k$-vector space. Let $\mathfrak{B}$ be the set of monomial $k$-basis of $R'$. We have
\[
\mathfrak{B} = \left\lbrace 
	\begin{array}{c}
	\{1, x_2,\ldots,x_{k-1}, y,y^2,\ldots,y^{\mu-1}\}
	 \cup \bigcup_{i=2}^{k-1} \{x_i y^j \mid 1 \leq j \leq \mu-1\} 
	\end{array}
	\right\rbrace
\]
Now, by Theorem \cite[Theorem 3.3]{ojeda-tenorio}, we have 
\[
\mathrm{Ap}(S_{a,d,k}^b,E)= \{ \mathrm{deg}_{S_{a,d,k}^b}(u) \mid u \in \mathfrak{B}\}.
\]
Hence, we have 
\[
\mathrm{Ap}(S_{a,d,k}^b,E)= \left\lbrace 
	\begin{array}{c}
	\{(0,0)\}  \cup \{ b, 2b, \ldots, (\mu-1)b \}
	 \cup \{(a+id)+ \ell b \mid 1 \leq i \leq k-1 , 0 \leq \ell \leq \mu-1\} 
	\end{array}
	\right\rbrace .
\]
\end{proof}

\begin{corollary}\label{QF(S)b}
Let $\mu$ be the smallest positive integer such that $\mu b \in S_{a,d,k}$, then $\mathrm{QF}(S_{a,d,k}^b) = \{(\mu-1)b - (a + id) \mid 1 \leq i \leq k-1\}$.
\end{corollary}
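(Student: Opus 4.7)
The plan is to determine $M := \max_{\preceq_{S_{a,d,k}^b}} \mathrm{Ap}(S_{a,d,k}^b, E)$ using the Apery set description of Proposition~\ref{Aperysetb}, and then apply the definition of quasi-Frobenius element together with Lemma~\ref{simplicialb}, which identifies the extremal rays as $E = \{a, a+kd\}$ (so the element to be subtracted is $2a + kd$).

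First, I would rule out every Apery element except those of the form $(a+id) + (\mu-1)b$ with $1 \leq i \leq k-1$ by exhibiting a strict successor in the Apery set: $0 \prec b$; $jb \prec (j+1)b$ for $1 \leq j \leq \mu-2$ and $(\mu-1)b \prec (a+d) + (\mu-1)b$; and $(a+id) + \ell b \prec (a+id) + (\ell+1)b$ for $0 \leq \ell \leq \mu-2$. Hence $M \subseteq \{(a+id) + (\mu-1)b : 1 \leq i \leq k-1\}$.

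Next, I would show each $(a + id) + (\mu - 1)b$ is actually maximal. Assume for contradiction $(a+id) + (\mu-1)b \prec z$ for some $z \in \mathrm{Ap}(S_{a,d,k}^b, E)$; then $z = (a+id) + (\mu-1)b + w$ with $0 \neq w = \sum_{j=1}^{k+1}\alpha_j(a+(j-1)d) + \gamma b \in S_{a,d,k}^b$. The Apery conditions $z - a, z - (a+kd) \notin S_{a,d,k}^b$ immediately force $\alpha_1 = \alpha_{k+1} = 0$. Using the $\mathbb{Q}$-linear independence of $a, d$, write $b = \alpha_b a + \delta_b d$ in $\mathbb{Q}^2$; since $\mu b \in S_{a,d,k}$ and every nonzero element of $S_{a,d,k}$ has strictly positive $a$-coefficient (being an $\mathbb{N}$-combination of the $a + jd$'s), one deduces $\alpha_b > 0$. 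Equating the $a$-coefficients of $z$ computed from the expansion $(a+id) + (\mu-1)b + w$ and from the Apery form (where $z$ is one of $0$, $jb$ with $j \leq \mu-1$, or $(a+i'd) + \ell'b$ with $\ell' \leq \mu-1$), the right-hand side is at most $1 + (\mu-1)\alpha_b$ in every case, while the expansion yields $1 + (\mu-1+\gamma)\alpha_b + \sum_{j=2}^k \alpha_j \geq 1 + (\mu-1)\alpha_b$, with equality forcing $\gamma = 0$ and all $\alpha_j = 0$. Equality therefore gives $w = 0$, contradicting $w \neq 0$, while strict inequality is itself a contradiction. Hence $M = \{(a+id) + (\mu-1)b : 1 \leq i \leq k-1\}$.

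Finally, applying the definition, each element of $M$ produces the quasi-Frobenius element $(a+id) + (\mu-1)b - (2a + kd) = (\mu-1)b - (a + (k-i)d)$. As $i$ ranges over $\{1,\ldots,k-1\}$, so does $k-i$, yielding $\mathrm{QF}(S_{a,d,k}^b) = \{(\mu-1)b - (a+id) : 1 \leq i \leq k-1\}$ as claimed. The main obstacle is Step~2: proving $\alpha_b > 0$ cleanly (the key role of $\mathbb{Q}$-linear independence) and then carrying through the $a$-coefficient comparison to rule out every possible $z$ with $w \neq 0$; the other steps are routine bookkeeping from Proposition~\ref{Aperysetb}.
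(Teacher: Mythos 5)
Your proposal is correct and follows essentially the same route as the paper: read off $\max_{\preceq}\mathrm{Ap}(S_{a,d,k}^b,E)$ from Proposition~\ref{Aperysetb} and subtract the sum of the extremal rays $2a+kd$, with the re-indexing $i\mapsto k-i$ giving the stated set. The paper simply asserts that the maximal elements are the $(a+id)+(\mu-1)b$; your $a$-coefficient comparison (using $\alpha_b>0$ from $\mathbb{Q}$-linear independence) is a correct and welcome justification of that assertion, but it does not change the approach.
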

\begin{proof}
From the Proposition \ref{Aperysetb}, we see that
\[
\max_{\preceq} \mathrm{Ap}(S_{a,d,k}^b,E) = \{(a+id)+(\mu-1)b \mid 1 \leq i \leq k-1 \}
\]
Therefore by lemma \ref{simplicialb}, we have
\[
\mathrm{QF}(S_{a,d,k}^b) = \{ -(a+ id) + (\mu-1)b \mid 1 \leq i \leq k-1\}.
\]
\end{proof}

\begin{corollary}
$k[S_{a,d,k}^b]$ is Cohen-Macaulay with type $k-1$. In Particular, $k[S_{a,d,k}^b]$ is Gorenstein if and only if $k=2$.
\end{corollary}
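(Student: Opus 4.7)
The plan is to combine the Cohen-Macaulay criterion of Theorem \ref{rosalesoncm} with the Apery set from Proposition \ref{Aperysetb} and the quasi-Frobenius description of Corollary \ref{QF(S)b}. By Lemma \ref{simplicialb}, $S_{a,d,k}^b$ is simplicial with extremal rays $E = \{a, a+kd\}$, so Theorem \ref{rosalesoncm} does apply. The key structural observation is that the proof of Proposition \ref{Aperysetb} actually exhibits $S_{a,d,k}^b$ as a gluing of $S_{a,d,k}$ and $\langle b\rangle$ by $\mu b$, so in particular $G(S_{a,d,k}) \cap G(\langle b\rangle) = \mu b\,\mathbb{Z}$; moreover its Apery set can be rewritten uniformly as $\mathrm{Ap}(S_{a,d,k}^b, E) = \{v + \ell b \mid v \in \mathrm{Ap}(S_{a,d,k}, E),\ 0 \leq \ell \leq \mu-1\}$.

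First I would verify the combinatorial criterion of Theorem \ref{rosalesoncm}. Given two distinct elements $x = v + \ell b$ and $y = v' + \ell' b$ of $\mathrm{Ap}(S_{a,d,k}^b, E)$, suppose for contradiction that $x - y \in G(E)$. Since $G(E) \subseteq G(S_{a,d,k})$ and $v - v' \in G(S_{a,d,k})$, the difference $(\ell - \ell')b = (x - y) - (v - v')$ lies in $G(S_{a,d,k}) \cap G(\langle b\rangle) = \mu b\,\mathbb{Z}$. Since $|\ell - \ell'| \leq \mu - 1$, this forces $\ell = \ell'$, whence $v - v' \in G(E)$; but Theorem \ref{CohenMacaulayness} shows that distinct elements of $\mathrm{Ap}(S_{a,d,k}, E)$ cannot differ by an element of $G(E)$, so $v = v'$ and $x = y$, a contradiction. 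Hence $k[S_{a,d,k}^b]$ is Cohen-Macaulay.

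Next, Proposition \ref{type} identifies the Cohen-Macaulay type of $k[S_{a,d,k}^b]$ with $|\mathrm{QF}(S_{a,d,k}^b)|$, which equals $k-1$ by Corollary \ref{QF(S)b}. Finally, $k[S_{a,d,k}^b]$ is Gorenstein iff its type equals $1$, iff $k - 1 = 1$, iff $k = 2$; this is the exact analogue of Corollary \ref{Gorenstein} for the unextended semigroup.

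The only step requiring care is the Cohen-Macaulay verification. The main obstacle is recognizing that the Apery set decomposes as $v + \ell b$ in a way compatible with the subgroup decomposition coming from the gluing, so that $G(S_{a,d,k}) \cap G(\langle b\rangle) = \mu b\,\mathbb{Z}$ can actually be invoked to separate the $\ell$-coordinate from the $v$-coordinate; once this separation is in place the type computation and the Gorenstein dichotomy follow immediately from results already proved in the paper.
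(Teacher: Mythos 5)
Your proof is correct, but the Cohen--Macaulay step takes a genuinely different route from the paper's. The paper, having shown in the proof of Proposition \ref{Aperysetb} that $S_{a,d,k}^b$ is a gluing of $S_{a,d,k}$ and $\langle b\rangle$ by $\mu b$, simply invokes the transfer theorem of Gimenez--Srinivasan (\cite[Theorem 1.5]{hemagluingsemigroup}): a gluing of two semigroups with Cohen--Macaulay semigroup rings again has a Cohen--Macaulay semigroup ring, and since $k[S_{a,d,k}]$ is Cohen--Macaulay by Theorem \ref{CohenMacaulayness} and $k[\langle b\rangle]$ is trivially so, the conclusion is immediate. You instead verify the Rosales--Garc\'{\i}a-S\'{a}nchez criterion (Theorem \ref{rosalesoncm}) directly, using the product structure $\mathrm{Ap}(S_{a,d,k}^b,E)=\{v+\ell b \mid v \in \mathrm{Ap}(S_{a,d,k},E),\ 0 \leq \ell \leq \mu-1\}$ of Proposition \ref{Aperysetb} together with the identity $G(S_{a,d,k})\cap G(\langle b\rangle)=\mu b\,\mathbb{Z}$ from its proof to force $\ell=\ell'$, and then reducing to the fact that distinct Ap\'{e}ry elements of $S_{a,d,k}$ do not differ by an element of $G(E)$. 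That reduction is sound, but note that this fact is established inside the \emph{proof} of Theorem \ref{CohenMacaulayness} (including the separate remark handling the case $v'=0$), not in its statement, so you should point to that argument explicitly. Your route is longer but self-contained, staying entirely within criteria already deployed in the paper and avoiding the external gluing reference; the paper's route is shorter and makes the structural role of the gluing more transparent. The type computation (Corollary \ref{QF(S)b} with Proposition \ref{type}) and the Gorenstein dichotomy (type $1$ iff $k=2$) are identical in both arguments.
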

\begin{proof}
By Theorem \ref{CohenMacaulayness}, we know that $k[S_{a,d,k}]$ is Cohen-Macaulay and observe that $k[\langle b \rangle]$ is also Cohen-Mcaulay. From the proof of Proposition \ref{Aperysetb}, we have $S_{a,d,k}^b$ is a gluing of $S_{a,d,k}$ and $\langle b \rangle$. Therefore by \cite[Theorem 1.5]{hemagluingsemigroup}, we have  $k[S_{a,d,k}^b]$ is Cohen-Macaulay. From Corollary \ref{QF(S)b} and Theorem \ref{type}, it follows that Cohen-Macaulay type of $k[S_{a,d,k}^b]$ is $k-1$.
\end{proof}

From the proof of Proposition \ref{Aperysetb}, note that the extra minimal generator in $I_{S_{a,d,k}^b}$ is $f = y^{\mu} - {\bf x^{\lambda}}$. By \cite[Theorem 6.1]{hema2019}, we have that the mapping cone induced by the multiplication of $f$ to the minimal free resolution of $k[S_{a,d,k}]$ gives the minimal free resolution of $k[S_{a,d,k}^b]$. Therefore, we have the following:
\begin{corollary}\label{resb}
Let $R = k[x_1,\ldots,x_{k+1},y]$. Then\\
$(i)$ If $k=2$, then the complex
 \[
 0 \longrightarrow R \longrightarrow R^2 \longrightarrow R \longrightarrow R/I_{S_{a,d,k}^b} \longrightarrow 0,
\]  
is a minimal free resolution of $k[S_{a,d,2}^b]$.\\
$(ii)$ If $k=3$, then the complex
 \[
 0 \longrightarrow R^2 \longrightarrow R^5 \longrightarrow R^4 \longrightarrow R \longrightarrow R/I_{S_{a,d,k}^b} \longrightarrow 0,
\]  
is a minimal free resolution of $k[S_{a,d,3}^b]$.\\
$(ii)$ If $k=4$, then the complex
 \[
 0 \longrightarrow R^3 \longrightarrow R^{11} \longrightarrow R^{14} \longrightarrow R^7 \longrightarrow R \longrightarrow R/I_{S_{a,d,k}^b} \longrightarrow 0,
\]  
is a minimal free resolution of $k[S_{a,d,4}^b]$.
\end{corollary}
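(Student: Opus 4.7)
The plan is to assemble the resolution of $R/I_{S_{a,d,k}^b}$ as a mapping cone, exploiting the gluing description already in hand. Recall from the proof of Proposition \ref{Aperysetb} that $S_{a,d,k}^b$ is the gluing of $S_{a,d,k}$ and $\langle b\rangle$ by $\mu b$, and that the extra minimal generator of $I_{S_{a,d,k}^b}$ beyond the generators of $I_{S_{a,d,k}}$ is $f = y^\mu - \mathbf{x}^\lambda$. Write $R = k[x_1,\ldots,x_{k+1},y]$ and $R' = k[x_1,\ldots,x_{k+1}]$. The minimal free resolutions of $R'/I_{S_{a,d,k}}$ are known explicitly (for $k=2$ directly from Theorem \ref{generatingset}, for $k=3$ from Proposition \ref{resk=3}, and for $k=4$ from Proposition \ref{resk=4}), with Betti sequences $(1,1)$, $(1,3,2)$, and $(1,6,8,3)$ respectively. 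Extending scalars to $R$, these give free resolutions of $R/I_{S_{a,d,k}}R$ over $R$ with the same Betti numbers; similarly the Koszul complex on $f$ gives the two-term free resolution $0\to R\to R\to R/\langle f\rangle\to 0$ of the second gluing factor.

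Next I would invoke \cite[Theorem 6.1]{hema2019}: since $S_{a,d,k}^b$ is a gluing, the mapping cone of multiplication by $f$ on the minimal free resolution of $R/I_{S_{a,d,k}}R$ yields a \emph{minimal} free resolution of $R/I_{S_{a,d,k}^b}$. Concretely, if the resolution of $R/I_{S_{a,d,k}}R$ is $F_\bullet$ and we shift it by the degree of $f$ to form $F_\bullet(-\mu b)$, then the mapping cone has $i$-th free module $F_i \oplus F_{i-1}(-\mu b)$, so its ranks are $\beta_i(R/I_{S_{a,d,k}}) + \beta_{i-1}(R/I_{S_{a,d,k}})$ with the convention $\beta_{-1}=0$ and where $\beta_{\text{top}+1}=0$.

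Carrying this out for each case yields precisely the ranks claimed. For $k=2$, from $(1,1)$ we obtain $(1,\,1{+}1,\,1)=(1,2,1)$, giving $0\to R\to R^2\to R\to R/I_{S_{a,d,2}^b}\to 0$. For $k=3$, from $(1,3,2)$ we obtain $(1,\,3{+}1,\,2{+}3,\,2)=(1,4,5,2)$, giving $0\to R^2\to R^5\to R^4\to R\to R/I_{S_{a,d,3}^b}\to 0$. For $k=4$, from $(1,6,8,3)$ we obtain $(1,\,6{+}1,\,8{+}6,\,3{+}8,\,3)=(1,7,14,11,3)$, giving $0\to R^3\to R^{11}\to R^{14}\to R^7\to R\to R/I_{S_{a,d,4}^b}\to 0$.

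The main point to justify is the \emph{minimality} of the mapping cone; in general a mapping cone only produces a (possibly non-minimal) free resolution and cancellation can occur. Here the crucial input is \cite[Theorem 6.1]{hema2019}, which guarantees that for a gluing the mapping cone along the new relation $f$ is already minimal because the entries introduced by the comparison map are scalar multiples of $f$ (hence lie in the maximal ideal) and no cancellation with existing generators is possible. Once that is cited, the remaining verification is a bookkeeping check of Betti ranks, which matches in all three cases.
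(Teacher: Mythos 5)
Your proposal is correct and follows essentially the same route as the paper: the paper also observes that the only new minimal generator is $f = y^{\mu}-\mathbf{x}^{\lambda}$ and invokes \cite[Theorem 6.1]{hema2019} to conclude that the mapping cone of multiplication by $f$ on the minimal free resolution of $k[S_{a,d,k}]$ is the minimal free resolution of $k[S_{a,d,k}^b]$, after which the ranks are obtained by the same Betti-number bookkeeping $\beta_i+\beta_{i-1}$ from the sequences $(1,1)$, $(1,3,2)$, $(1,6,8,3)$. Your explicit discussion of why minimality is not automatic for a mapping cone, and why the cited theorem supplies it here, is a welcome clarification the paper leaves implicit.
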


\section{Examples}
\begin{example}
Let $S = \langle (5,4), (9,13), (13,22), (17,31)\rangle$. Then for $a = (5,4)$, $d = (4,9)$ and $k = 3$, we have $S := S_{(5,4),(4,9),3}$. Therefore, by Theorem \ref{CohenMacaulayness} and Corollary \ref{Gorenstein}, we have that $k[S]$ is Cohen-Macaulay but not Gorenstein. By Corollary \ref{normal}, $k[S]$ is normal also. By Theorem \ref{generatingset}, we have 
\[
I_S = \langle x_2^2-x_1x_3, x_2x_3 - x_1x_4, x_3^2-x_2x_4 \rangle.
\]
Let $R = k[x_1,x_2,x_3,x_4].$ By Proposition \ref{resk=3}, we have 
\[
F_.:  0 \rightarrow R^2 \xrightarrow{\delta_2} R^3 \xrightarrow{\delta_1} R \rightarrow R/I_S \rightarrow 0
\]

is a minimal free resolution of $k[S].$ Also by Corollary \ref{hilbertseriesk=3}, we have 
\[
H(k[S],{\bf t}) = \frac{1-t_1^{18}t_2^{26}-t_1^{22}t_2^{35}-t_1^{26}t_2^{44}+t_1^{31}t_2^{48}+t_1^{35}t_2^{57}}{(1-t_1^5t_2^4)(1-t_1^9t_2^{13})(1-t_1^{13}t_2^{22})(1-t_1^{17}t_2^{31})}
\]
is the Hilbert series of $k[S].$ By Theorem \ref{Koszul}, $k[S]$ is Koszul also. Finally, by Theorem \ref{regI_S_akd}, the Castelnuovo-Mumford regularity of $I_S$ with respect to the standard grading on $R$ is $2$.
\end{example}

From the Corollary \ref{normal}, we know that $k[S_{a,d,k}]$ is always normal, but this may not be the case for $k[S_{a,d,k}^b]$.

\begin{example}
Let $S = \langle (2,3), (4,5), (6,7), (8,9), (9,11)\rangle$. Then for $a = (2,3)$, $d = (2,2)$, $b = (9,11)$ and $k = 3$, we have $S := S_{(2,3),(2,2),3}^{(9,11)}$ and $\mu=2$ is the smallest positive integer such that $\mu b = 2(2,3)+(6,7)+(8,9) \in S_{(2,3),(2,2),3}$. For $E = \{(2,3),(8,9)\}$, by Proposition \ref{Aperysetb}, we have
\[
\mathrm{Ap}(S,E) = \{(0,0),(4,5),(6,7),(13,16),(15,18)\}
\]
Therefore, \[\max_{\preceq} \mathrm{Ap}(S,E) = \{(13,16),(15,18)\}\] and hence, $\mathrm{QF}(S) = \{(3,4),(5,6)\}$. It is clear that $- ~ \mathrm{QF}(S) \nsubseteq \mathrm{relint}(S).$ Therefore by \cite[Theorem 4.6]{jafari-2}, $k[S]$ is not normal. From the proof of Proposition \ref{Aperysetb}, we have
\[
I_S=I_{S_{(2,3),(2,2),3}^{(9,11)}} = I_{S_{(2,3),(2,2),3}} + \langle y^{\mu} - {\bf x}^{\lambda} \rangle = \langle x_2^2-x_1x_3, x_2x_3 - x_1x_4, x_3^2-x_2x_4, y^2-x_1^2x_3x_4\rangle .
\]
For $R = k[x_1,x_2,x_3,x_4,y]$, By Corollary \ref{resb}, we have
\[
 0 \longrightarrow R^2 \longrightarrow R^5 \longrightarrow R^4 \longrightarrow R \longrightarrow R/I_S \longrightarrow 0,
\]  
is a minimal free resolution of $k[S]$.
\end{example} 

\bibliographystyle{plain}

\end{document}